\theoremstyle{plain}
\theoremstyle{definition}
\theoremstyle{remark}
\theoremstyle{plain}
\newtheorem{thmsec}{Th\'eor\`eme}[section]
\newtheorem{thm}[thmsec]{Th\'eor\`eme}
\newtheorem{pro}[thmsec]{Proposition}
\newtheorem{lem}[thmsec]{Lemme}
\newtheorem{cor}[thmsec]{Corollaire}
\theoremstyle{definition}
\newtheorem{defin}[thmsec]{D\'efinition}
\theoremstyle{remark}
\newtheorem{rem}[thmsec]{Remarque}
\newtheorem{eg}[thmsec]{Exemple}
\def\og{\leavevmode\raise.3ex\hbox{$\scriptscriptstyle\langle\!\langle$~}}
\def\fg{\leavevmode\raise.3ex\hbox{~$\!\scriptscriptstyle\,\rangle\!\rangle$}}
\numberwithin{equation}{section}       
\newcommand{\N}{\mathbb{N}}
\newcommand{\Z}{\mathbb{Z}}
\newcommand{\C}{\mathbb{C}}
\newcommand{\sph}{\mathbb{P}^{1}_{\mathbb{C}}}
\newcommand{\pp}{\mathbb{P}^{2}_{\mathbb{C}}}
\newcommand{\pd}{\mathbb{\check{P}}^{2}_{\mathbb{C}}}
\newcommand\pgcd{\mathrm{pgcd}}
\newcommand\Sing{\mathrm{Sing}}
\newcommand\Leg{\mathrm{Leg}}
\newcommand\IF{\mathrm{I}_{\mathcal{F}}}
\newcommand\IinvF{\mathrm{I}_{\mathcal{F}}^{\mathrm{inv}}}
\newcommand\ItrF{\mathrm{I}_{\mathcal{F}}^{\hspace{0.2mm}\mathrm{tr}}}
\newcommand\IH{\mathrm{I}_{\mathcal{H}}}
\newcommand\IinvH{\mathrm{I}_{\mathcal{H}}^{\mathrm{inv}}}
\newcommand\ItrH{\mathrm{I}_{\mathcal{H}}^{\hspace{0.2mm}\mathrm{tr}}}
\newcommand\Cinv{\mathrm{C}_{\hspace{-0.3mm}\mathcal{H}}}
\newcommand\Dtr{\mathrm{D}_{\hspace{-0.3mm}\mathcal{H}}}
\newcommand\radH{\Sigma_{\mathcal{H}}^{\mathrm{rad}}}
\newcommand\radHd{\check{\Sigma}_{\mathcal{H}}^{\mathrm{rad}}}
\newcommand\F{\mathcal{F}}
\newcommand\W{\mathcal{W}}
\newcommand\G{\mathcal{G}}
\newcommand\Gunderline{{\mspace{2mu}\underline{\mspace{-2mu}\mathcal{G}\mspace{-2mu}}\mspace{2mu}}}
\begin{document}
\title{Tissus plats et feuilletages homogènes sur le plan projectif}
\date{\today}

\author{Samir \textsc{Bedrouni}}

\address{Facult\'e de Math\'ematiques, USTHB, BP $32$, El-Alia, $16111$ Bab-Ezzouar, Alger, Alg\'erie}
\email{sbedrouni@usthb.dz}

\author{David \textsc{Mar\'{\i}n}}

\address{Departament de Matem\`{a}tiques Universitat Aut\`{o}noma de Barcelona E-08193  Bellaterra (Barcelona) Spain}

\email{davidmp@mat.uab.es}





\maketitle{}

\begin{abstract}
\selectlanguage{french}
Le but de ce travail est d'étudier les feuilletages du plan projectif complexe ayant une transformée de \textsc{Legendre} (tissu dual) plate. Nous établissons quelques critères effectifs de la platitude du $d$-tissu dual d'un feuilletage homogène de degré $d$ et nous  décrivons quelques exemples explicites. Ces résultats nous permettent de montrer qu'à automorphisme de $\pp$ près il y a $11$ feuilletages homogènes de degré $3$ ayant cette propriété. Nous verrons aussi qu'il est possible, sous certaines hypothèses, de ramener l'étude de la platitude du tissu dual d'un feuilletage inhomogène au cadre homogène. Nous obtenons quelques résultats de classification de feuilletages à singularités non-dégénérées et de transformée de  \textsc{Legendre}  plate.
\medskip

\noindent\textit{\textbf{Mots-clés.}} --- tissu, platitude, transformation de \textsc{Legendre}, feuilletage homogène.
\end{abstract}

\begin{altabstract}
\selectlanguage{english}
The aim of this work is to study the foliations on the complex projective plane with flat \textsc{Legendre} transform (dual web). We establish some effective criteria for the flatness of the dual $d$-web of a homogeneous foliation of degree $d$ and we describe some explicit examples. These results allow us to show that up to automorphism of $\pp$ there are $11$ homogeneous foliations of degree $3$ with flat dual web. We will see also that it is possible, under certain assumptions, to bring the study of  flatness of the dual web of a general foliation to the homogeneous framework. We get some classification results about foliations with non-degenerate singularities and flat \textsc{Legendre} transform.
\medskip

\noindent\textit{\textbf{Keywords.}} --- web, flatness, \textsc{Legendre} transformation, homogeneous foliation.

\medskip
\noindent\textit{\textbf{2010 Mathematics Subject Classification.}} --- 14C21, 32S65, 53A60.
\end{altabstract}

\selectlanguage{french}

\section*{Introduction}
\bigskip

\noindent Un $d$-tissu (régulier) $\W$ de $(\mathbb{C}^2,0)$ est la donnée d'une famille $\{\F_1,\F_2,\ldots,\F_d\}$ de feuilletages holomorphes réguliers de $(\mathbb{C}^2,0)$ deux à deux transverses en l'origine. Le premier résultat significatif dans l'étude des tissus a été obtenu par W. \textsc{Blaschke} et J. \textsc{Dubourdieu} autour des années $1920$. Ils ont montré (\cite{BD28}) que tout $3$-tissu régulier $\mathcal{W}$ de $(\mathbb{C}^2,0)$ est conjugué, via un isomorphisme analytique de $(\mathbb{C}^2,0)$, au $3$-tissu trivial défini par $\mathrm{d}x.\mathrm{d}y.\mathrm{d}(x+y)$, et cela sous l'hypothèse d'annulation d'une $2$-forme différentielle $K(\mathcal{W})$ connue sous le nom de courbure de \textsc{Blaschke} de $\mathcal{W}$. La courbure d'un $d$-tissu $\W$ avec~$d>3$ se définit comme la somme des courbures de \textsc{Blaschke} des sous-$3$-tissus de $\W$. Un tissu de courbure nulle est dit plat. Cette notion est utile pour la classification des tissus de rang maximal; un résultat de N. \textsc{Mih\u{a}ileanu} montre que la platitude est une condition nécessaire pour la maximalité du rang, \emph{voir} par exemple \cite{Hen06,Rip07}.

\noindent Depuis peu, l'étude des tissus globaux holomorphes définis sur les surfaces complexes a été réactualisée, \emph{voir} par exemple \cite{PP09,MP13}. Nous nous intéressons dans ce qui suit aux tissus du plan projectif complexe. Un $d$-tissu (global) sur $\pp$ est donné dans une carte affine $(x,y)$ par une équation différentielle algébrique $F(x,y,y')=0$, où $F(x,y,p)=\sum_{i=0}^{d}a_{i}(x,y)p^{d-i}\in\mathbb{C}[x,y,p]$ est un polynôme réduit à coefficient $a_0$ non identiquement nul. Au voisinage de tout point $z_{0}=(x_{0},y_{0})$ tel que $a_{0}(x_{0},y_{0})\Delta(x_{0},y_{0})\neq 0$, où $\Delta(x,y)$ est le $p$-discriminant de $F$, les courbes intégrales de cette équation définissent un $d$-tissu régulier de $(\C^{2},z_{0})$.

\noindent La courbure d'un tissu $\W$ sur $\pp$ est une $2$-forme méromorphe à pôles le long du discriminant $\Delta(\W)$. La platitude d'un tissu $\W$ sur $\pp$ se caractérise par l'holomorphie de sa courbure $K(\W)$ le long des points génériques de $\Delta(\W)$, \emph{voir} \S\ref{subsec:courbure-platitude}.

\noindent D.~\textsc{Mar\'{\i}n} et J. \textsc{Pereira} ont montré, dans \cite{MP13}, comment on peut associer à tout feuilletage $\F$ de degré $d$ sur $\pp$, un $d$-tissu sur le plan projectif dual $\pd$, appelé {\sl transformée de \textsc{Legendre}} de $\F$ et noté $\Leg\F$; les feuilles de $\Leg\F$ sont essentiellement les droites tangentes aux feuilles de $\F$. Plus explicitement, soit $(p,q)$ la carte affine de $\pd$ associée à la droite $\{y=px-q\}\subset{\mathbb{P}^{2}_{\mathbb{C}}}$; si $\F$ est défini par une $1$-forme $\omega=A(x,y)\mathrm{d}x+B(x,y)\mathrm{d}y,$ où $A,B\in\mathbb{C}[x,y],$ $\mathrm{pgcd}(A,B)=1$,  alors $\Leg\F$ est donné par l'équation différentielle algébrique
\[
\check{F}(p,q,x):=A(x,px-q)+pB(x,px-q), \qquad \text{avec} \qquad x=\frac{\mathrm{d}q}{\mathrm{d}p}.
\]
\noindent L'ensemble des feuilletages de degré $d$ sur $\pp$, noté $\mathbf{F}(d)$, s'identifie à un ouvert de \textsc{Zariski} dans un espace projectif de dimension $(d+2)^{2}-2$ sur lequel agit le groupe~$\mathrm{Aut}(\pp).$ Le sous-ensemble $\mathbf{FP}(d)$ de $\mathbf{F}(d)$ formé des $\F\in\mathbf{F}(d)$ tels que $\Leg\F$ soit plat est un fermé de \textsc{Zariski} de $\mathbf{F}(d)$. La classification des feuilletages $\F\in\mathbf{FP}(d)$ modulo $\mathrm{Aut}(\pp)$ reste entière. Le premier cas non trivial que l'on rencontre est celui où $d=3$; on dispose actuellement d'une caractérisation géométrique (\cite[Théorème~4.5]{BFM13}) des éléments de $\mathbf{FP}(3)$, mais ce résultat reste insuffisant pour avancer dans leur classification. C'est dans cette optique que nous nous proposons d'étudier cette question de platitude au niveau des éléments de $\mathbf{F}(d)$ qui sont \textsl{homogènes}, {\it i.e.} qui sont invariants par homothétie. En fait nous établirons, pour des feuilletages homogènes $\mathcal{H}\in\mathbf{F}(d)$, quelques critères effectifs de l'holomorphie de la courbure de $\Leg\mathcal{H}$; de plus nous verrons (Proposition~\ref{pro:F-dégénère-H}) que l'étude de la platitude de la transformée de \textsc{Legendre} d'un feuilletage inhomogène se ramène, sous certaines hypothèses, au cadre homogène.
\smallskip

\noindent Un feuilletage homogène $\mathcal{H}$ de degré $d$ sur $\pp$ est donné, pour un bon choix de coordonnées affines $(x,y)$, par une $1$-forme homogène $\omega_d=A_d(x,y)\mathrm{d}x+B_d(x,y)\mathrm{d}y,$ où $A_d,B_d\in\mathbb{C}[x,y]_d$\, et $\mathrm{pgcd}(A_d,B_d)=1.$

\noindent
L'homogénéité de $\mathcal{H}$ implique (\emph{voir} \cite[page~177]{MP13}) que le discriminant de $\Leg\mathcal{H}$ se décompose en produit de $(d-1)(d+2)$ droites comptées avec multiplicités; certaines parmi elles sont invariantes par $\Leg\mathcal{H}$ et d'autres non, {\it i.e.} sont transverses. De plus la multiplicité de $\Delta(\Leg\mathcal{H})$ le long d'une droite transverse est comprise entre $1$ et $d-1$; en degré $3$ elle est donc soit minimale (égale à $1$) soit maximale (égale à $2$).
\smallskip

\noindent Le Théorème~\ref{thm:holomo-G(I^tr)} affirme que le $d$-tissu $\Leg\mathcal{H}$ est plat si et seulement si sa courbure est holomorphe sur la partie transverse de $\Delta(\Leg\mathcal{H})$.
\smallskip

\noindent Le Théorème~\ref{thm:Barycentre} (resp. Théorème~\ref{thm:Divergence}) contrôle de façon effective l'holomorphie de la courbure $K(\Leg\mathcal{H})$ le long d'une droite $\ell\subset\Delta(\Leg\mathcal{H})$ non invariante par $\Leg\mathcal{H}$ de multiplicité minimale $1$ (resp. maximale $d-1$).

\noindent Ces théorèmes nous permettront de décrire certains feuilletages homogènes appartenant à $\mathbf{FP}(d)$ pour $d$ arbitraire (Propositions
\ref{pro:omega1-omega2}, \ref{pro:omega3-omega4} et \ref{pro:omega5-omega6}).
\smallskip

\noindent En combinant les Théorèmes \ref{thm:holomo-G(I^tr)}, \ref{thm:Barycentre} et \ref{thm:Divergence} nous obtenons une caractérisation complète de la platitude de la transformée de \textsc{Legendre} d'un feuilletage homogène de degré $3$ (Corollaire~\ref{cor:platitude-degre-3}). Ce résultat nous permettra de
classifier les éléments de $\mathbf{FP}(3)$ qui sont homogènes: {\sl à automorphisme de $\pp$ près, il y a $11$ feuilletages homogènes de degré $3$ ayant une transformée de \textsc{Legendre} plate}, \emph{voir} Théorème~\ref{thm:Class-Homog3-Plat}.
\smallskip

\noindent En se basant essentiellement sur cette classification, nous obtenons un résultat (Théorème~\ref{thm:Fermat}) qui sort du cadre homogène: {\sl tout feuilletage $\F\in\mathbf{FP}(3)$ à singularités non-dégénérées ({\it i.e.} ayant pour nombre de \textsc{Milnor}~$1$) est linéairement conjugué au feuilletage de \textsc{Fermat} défini par la $1$-forme $(x^{3}-x)\mathrm{d}y-(y^{3}-y)\mathrm{d}x.$}
\smallskip
Comme application du Théorème~\ref{thm:Fermat} nous donnons une réponse partielle (Corollaire~\ref{cor3}) à \cite[Problème~9.1]{MP13}.

\bigskip
\noindent\textit{\textbf{Remerciements.}}
Ce travail a été soutenu par le Programme National Exceptionnel du Ministère de l'Enseignement Supérieur et de la Recherche Scientifique d'Algérie, et par les projets MTM2011-26674-C02-01 et MTM2015-66165-P du Ministère d'Économie et Compétitivité de l'\'{E}spagne. Le premier auteur remercie le Département de Mathématiques de l'UAB pour son séjour. Il remercie également D. Smaï pour ses précieux conseils.

\section{Préliminaires}
\bigskip

\subsection{Tissus}\label{subsec-tissus}

\noindent Soit $k\geq1$ un entier. Un {\sl $k$-tissu (global)} $\mathcal{W}$ sur une surface complexe $S$ est la donnée d'un recouvrement ouvert $(U_{i})_{i\in I}$ de $S$ et d'une collection de $k$-formes symétriques $\omega_{i}\in \mathrm{Sym}^{k}\Omega^{1}_{S}(U_{i})$, à zéros isolés, satisfaisant:
\begin{itemize}
\item [($\mathfrak{a}$)] il existe $g_{ij}\in \mathcal{O}^{*}_{S}(U_{i}\cap U_{j})$ tel que $\omega_i$ coïncide avec $g_{ij}\omega_{j}$ sur $U_{i}\cap U_{j}$;
\item [($\mathfrak{b}$)] en tout point générique $m$ de $U_{i},$ $\omega_{i}(m)$ se factorise en produit de $k$ formes linéaires deux à deux non colinéaires.
\end{itemize}
L'ensemble des points de $S$ qui ne vérifient pas la condition ($\mathfrak{b}$) est appelé le {\sl discriminant} de $\mathcal{W}$ et est noté $\Delta(\mathcal{W}).$ Lorsque $k=1$ cette condition est toujours vérifiée et on retrouve la définition usuelle d'un feuilletage holomorphe $\mathcal{F}$ sur $S.$ Le cocycle $(g_{ij})$ définit un fibré en droites $N$ sur $S$, appelé le {\sl fibré normal} de $\W$, et les $\omega_i$ se recollent pour définir une section globale $\omega\in\mathrm{H}^0(S,\mathrm{Sym}^{k}\Omega^{1}_{S}\otimes N)$.

\noindent Un $k$-tissu global $\W$ sur $S$ sera dit {\sl décomposable} s'il existe des tissus globaux $\mathcal{W}_{1},\mathcal{W}_{2}$ sur $S$ n'ayant pas de sous-tissus communs tels que $\mathcal{W}$ soit la superposition de $\mathcal{W}_{1}$ et $\mathcal{W}_{2}$; on écrira $\mathcal{W}=\mathcal{W}_{1}\boxtimes\mathcal{W}_{2}.$ Dans le cas contraire $\W$ sera dit {\sl irréductible}. On dira que $\mathcal{W}$ est {\sl complètement décomposable} s'il existe des feuilletages globaux $\mathcal{F}_{1},\ldots,\mathcal{F}_{k}$ sur $S$ tels que $\mathcal{W}=\mathcal{F}_{1}\boxtimes\cdots\boxtimes\mathcal{F}_{k}.$ Pour en savoir plus à ce sujet, nous renvoyons à \cite{PP09}.
\smallskip

\noindent On se restreindra dans ce travail au cas $S=\pp$. Se donner un $k$-tissu sur $\pp$ revient à se donner une $k$-forme symétrique polynomiale $\omega=\sum_{i+j=k}a_{ij}(x,y)\mathrm{d}x^{i}\mathrm{d}y^{j}$, à zéros isolés et de discriminant non identiquement nul. Ainsi tout $k$-tissu sur $\pp$ peut se lire dans une carte affine donnée $(x,y)$ de $\pp$ par une équation différentielle polynomiale $F(x,y,y')=0$ de degré $k$ en $y'$. Un $k$-tissu $\W$ sur $\pp$ est dit de {\sl degré} $d$ si le nombre de points où une droite générique de $\pp$ est tangente à une feuille de $\W$ est égal à $d$; c'est équivalent de dire que $\W$ est de fibré normal $N=\mathcal{O}_{\pp}(d+2k).$ Il est bien connu, {\em voir} par exemple \cite[Proposition~1.4.2]{PP09}, que les tissus de degré $0$ sont les tissus algébriques (leurs feuilles sont les droites tangentes à une courbe algébrique réduite).

\noindent Les auteurs dans \cite{MP13} ont associé, à tout $k$-tissu $\W$ de degré $d\geq1$ sur $\pp$, un $d$-tissu de degré $k$ sur le plan projectif dual $\pd$, appelé {\sl transformée de \textsc{Legendre}} de $\W$ et noté $\Leg\W$; les feuilles de $\Leg\W$ sont essentiellement les droites tangentes aux feuilles de $\W$. Plus explicitement, soit $(x,y)$ une carte affine de $\pp$ et considérons la carte affine  $(p,q)$  de $\pd$ associée à la droite $\{y=px-q\}\subset{\mathbb{P}^{2}_{\mathbb{C}}}.$ Soit $F(x,y;p)=0$, $p=\frac{\mathrm{d}y}{\mathrm{d}x}$, une équation différentielle implicite décrivant $\W$; alors $\Leg\W$ est donné par l'équation différentielle implicite
\[
\check{F}(p,q;x):=F(x,px-q;p)=0, \qquad \text{avec} \qquad x=\frac{\mathrm{d}q}{\mathrm{d}p}.
\]

\noindent Il est clair que cette transformation est involutive, {\it i.e.} $\Leg(\Leg\W)=\W$. Notons enfin que si $\mathcal{F}$ est un feuilletage de degré $d\geq1$ sur $\pp$, alors $\Leg\F$ est un $d$-tissu irréductible de degré $1$ sur $\pd$. Inversement un $d$-tissu irréductible de degré $1$ sur $\pd$ est nécessairement la transformée de \textsc{Legendre} d'un certain feuilletage de degré $d$ sur $\pp$ (\emph{voir} \cite{MP13}).

\subsection{Courbure et platitude}\label{subsec:courbure-platitude}

On rappelle ici la définition de la courbure d'un $k$-tissu $\mathcal{W}.$ On suppose dans un premier temps que $\mathcal{W}$ est un germe de $k$-tissu de $(\mathbb{C}^{2},0)$ complètement décomposable, $\mathcal{W}=\mathcal{F}_{1}\boxtimes\cdots\boxtimes\mathcal{F}_{k}.$ Soit, pour tout $1\leq i\leq k,$ une $1$-forme $\omega_{i}$ à singularité isolée en $0$ définissant le feuilletage $\mathcal{F}_{i}.$ D'après \cite{PP08}, pour tout triplet $(r,s,t)$ avec $1\leq r<s<t\leq k,$ on définit $\eta_{rst}=\eta(\mathcal{F}_{r}\boxtimes\mathcal{F}_{s}\boxtimes\mathcal{F}_{t})$ comme l'unique $1$-forme méromorphe satisfaisant les égalités suivantes:
\begin{equation}\label{equa:eta-rst}
{\left\{\begin{array}[c]{lll}
\mathrm{d}(\delta_{st}\,\omega_{r}) &=& \eta_{rst}\wedge\delta_{st}\,\omega_{r}\\
\mathrm{d}(\delta_{tr}\,\omega_{s}) &=& \eta_{rst}\wedge\delta_{tr}\,\omega_{s}\\
\mathrm{d}(\delta_{rs}\,\omega_{t}) &=& \eta_{rst}\wedge\delta_{rs}\,\omega_{t}
\end{array}
\right.}
\end{equation}
où $\delta_{ij}$ désigne la fonction définie par $\omega_{i}\wedge\omega_{j}=\delta_{ij}\,\mathrm{d}x\wedge\mathrm{d}y.$ Comme chacune des $1$-formes $\omega_{i}$ n'est définie qu'à multiplication près par un inversible de $\mathcal{O}(\mathbb{C}^{2},0),$ il en résulte que chacune des $1$-formes $\eta_{rst}$ est bien déterminée à l'addition près d'une $1$-forme holomorphe fermée. Ainsi la $1$-forme
\begin{equation}\label{equa:eta}
\hspace{7mm}\eta(\mathcal{W})=\eta(\mathcal{F}_{1}\boxtimes\cdots\boxtimes\mathcal{F}_{k})=\sum_{1\le r<s<t\le k}\eta_{rst}
\end{equation}
est bien définie à l'addition près d'une $1$-forme holomorphe fermée. La {\sl courbure} du tissu $\mathcal{W}=\mathcal{F}_{1}\boxtimes\cdots\boxtimes\mathcal{F}_{k}$ est par définition la $2$-forme
\begin{align*}
&K(\mathcal{W})=K(\mathcal{F}_{1}\boxtimes\cdots\boxtimes\mathcal{F}_{k})=\mathrm{d}\,\eta(\mathcal{W}).
\end{align*}
\noindent On peut vérifier que $K(\mathcal{W})$ est une $2$-forme méromorphe à pôles le long du discriminant $\Delta(\mathcal{W})$ de $\mathcal{W},$ canoniquement associée à $\mathcal{W}$; plus précisément, pour toute application holomorphe dominante $\varphi,$ on a $K(\varphi^{*}\mathcal{W})=\varphi^{*}K(\mathcal{W}).$
\smallskip

\noindent Si maintenant $\mathcal{W}$ est un $k$-tissu sur une surface complexe $S$ (non forcément complètement décomposable), alors on peut le transformer en un $k$-tissu complètement décomposable au moyen d'un revêtement galoisien ramifié. L'invariance de la courbure de ce nouveau tissu par l'action du groupe de \textsc{Galois} permet de la redescendre en une $2$-forme méromorphe globale sur $S,$ à pôles le long du discriminant de $\mathcal{W}$ (\emph{voir} \cite{MP13}).
\smallskip

\noindent Un $k$-tissu $\mathcal{W}$ est dit {\sl plat} si sa courbure $K(\mathcal{W})$ est identiquement nulle.
\smallskip

\noindent Signalons qu'un $k$-tissu $\mathcal{W}$ sur $\mathbb{P}^{2}_{\mathbb{C}}$ est plat si et seulement si sa courbure est holomorphe le long des points génériques des composantes irréductibles de $\Delta(\mathcal{W})$. Ceci résulte de la définition de $K(\mathcal{W})$ et du fait qu'il n'existe pas de $2$-forme holomorphe sur $\mathbb{P}^{2}_{\mathbb{C}}$ autre que la $2$-forme nulle.

\subsection{Singularités et diviseur d'inflexion d'un feuilletage du plan projectif}

Un feuilletage holomorphe $\mathcal{F}$ de degré $d$ sur~$\pp$ est défini par une $1$-forme du type $$\omega=a(x,y,z)\mathrm{d}x+b(x,y,z)\mathrm{d}y+c(x,y,z)\mathrm{d}z,$$ o\`{u} $a,$ $b$ et $c$ sont des polynômes homogènes de degré $d+1$ sans composante commune satisfaisant la condition d'\textsc{Euler} $i_{\mathrm{R}}\omega=0$, où $\mathrm{R}=x\frac{\partial{}}{\partial{x}}+y\frac{\partial{}}{\partial{y}}+z\frac{\partial{}}{\partial{z}}$ désigne le champ radial et $i_{\mathrm{R}}$ le produit intérieur par $\mathrm{R}$. Le {\sl lieu singulier} $\mathrm{Sing}\mathcal{F}$ de $\mathcal{F}$ est le projectivisé du lieu singulier de~$\omega$ $$\mathrm{Sing}\omega=\{(x,y,z)\in\mathbb{C}^3\,\vert \, a(x,y,z)=b(x,y,z)=(x,y,z)=0\}.$$

\noindent Rappelons quelques notions locales attachées au couple $(\mathcal{F},s)$, où $s\in\Sing\mathcal{F}$. Le germe de $\F$ en $s$ est défini, à multiplication près par une unité de l'anneau local $\mathcal{O}_s$ en $s$, par un champ de vecteurs
\begin{small}
$\mathrm{X}=A(\mathrm{u},\mathrm{v})\frac{\partial{}}{\partial{\mathrm{u}}}+B(\mathrm{u},\mathrm{v})\frac{\partial{}}{\partial{\mathrm{v}}}$.
\end{small}
\noindent La {\sl multiplicité algébrique} $\nu(\mathcal{F},s)$ de $\mathcal{F}$ en $s$ est donnée par $$\nu(\mathcal{F},s)=\min\{\nu(A,s),\nu(B,s)\},$$ où $\nu(g,s)$ désigne la multiplicité algébrique de la fonction $g$ en $s$. L'{\sl ordre de tangence} entre $\mathcal{F}$ et une droite générique passant par $s$ est l'entier $$\hspace{0.8cm}\tau(\mathcal{F},s)=\min\{k\geq\nu(\mathcal{F},s)\hspace{1mm}\colon\det(J^{k}_{s}\,\mathrm{X},\mathrm{R}_{s})\neq0\},$$ où $J^{k}_{s}\,\mathrm{X}$ est le $k$-jet de $\mathrm{X}$ en $s$ et $\mathrm{R}_{s}$ est le champ radial centré en $s$. Le \textsl{nombre de \textsc{Milnor}} de $\F$ en $s$ est l'entier $$\mu(\mathcal{F},s)=\dim_\mathbb{C}\mathcal{O}_s/\langle A,B\rangle,$$ où $\langle A,B\rangle$ désigne l'id\'eal de $\mathcal{O}_s$ engendr\'e par $A$ et $B$.
\smallskip

\noindent La singularité $s$ est dite {\sl radiale d'ordre} $n-1$ si $\nu(\mathcal{F},s)=1$ et $\tau(\mathcal{F},s)=n.$

\smallskip
\noindent La singularité $s$ est dite {\sl non-dégénérée} si $\mu(\F,s)=1$, c'est équivalent de dire que la partie linéaire $J^{1}_{s}\mathrm{X}$ de $\mathrm{X}$ possède deux valeurs propres $\lambda,\mu$ non nulles. La quantité $\mathrm{BB}(\F,s)=\frac{\lambda}{\mu}+\frac{\mu}{\lambda}+2$ est appelée l'{\sl invariant de \textsc{Baum-Bott}} de $\F$ en $s$ (\emph{voir} \cite{BB72}). D'après \cite{CS82} il passe par $s$ au moins un germe de courbe $\mathcal{C}$ invariante par $\F$; à isomorphisme local près, on peut se ramener à $s=(0,0)$,\, $\mathrm{T}_{s}\mathcal{C}=\{\mathrm{v}=\,0\,\}$ et $J^{1}_{s}\mathrm{X}=\lambda \mathrm{u}\frac{\partial}{\partial\mathrm{u}}+(\varepsilon \mathrm{u}+\mu\hspace{0.1mm}\mathrm{v})\frac{\partial}{\partial \mathrm{v}}$, où l'on peut prendre $\varepsilon=0$ si $\lambda\neq\mu$. La quantité $\mathrm{CS}(\F,\mathcal{C},s)=\frac{\lambda}{\mu}$ est appelée l'{\sl indice de \textsc{Camacho-Sad}} de $\F$ en $s$ par rapport à $\mathcal{C}$.
\medskip

\noindent Rappelons la notion du diviseur d'inflexion de $\F$. Soit $\mathrm{Z}=E\frac{\partial}{\partial x}+F\frac{\partial}{\partial y}+G\frac{\partial}{\partial z}$ un champ de vecteurs homogène de degré $d$ sur $\mathbb{C}^3$ non colinéaire au champ radial décrivant $\mathcal{F},$ {\it i.e.} tel que $\omega=i_{\mathrm{R}}i_{\mathrm{Z}}\mathrm{d}x\wedge\mathrm{d}y\wedge\mathrm{d}z.$ Le {\sl diviseur d'inflexion} de $\mathcal{F}$, noté $\IF$, est le diviseur défini par l'équation
\begin{equation}\label{equa:ext1}
\left| \begin{array}{ccc}
x &  E &  \mathrm{Z}(E) \\
y &  F &  \mathrm{Z}(F)  \\
z &  G &  \mathrm{Z}(G)
\end{array} \right|=0.
\end{equation}
Ce diviseur a été étudié dans \cite{Per01} dans un contexte plus général. En particulier, les propriétés suivantes ont été prouvées.
\begin{enumerate}
\item [\texttt{1.}] Sur $\mathbb{P}^{2}_{\mathbb{C}}\smallsetminus\mathrm{Sing}\mathcal{F},$ $\IF$ coïncide avec la courbe décrite par les points d'inflexion des feuilles de $\mathcal{F}$;
\item [\texttt{2.}] Si $\mathcal{C}$ est une courbe algébrique irréductible invariante par $\mathcal{F},$ alors $\mathcal{C}\subset \IF$ si et seulement si $\mathcal{C}$ est une droite invariante;
\item [\texttt{3.}] $\IF$ peut se décomposer en $\IF=\IinvF+\ItrF,$ où le support de $\IinvF$ est constitué de l'ensemble des droites invariantes par $\mathcal{F}$ et où le support de $\ItrF$ est l'adhérence des points d'inflexion qui sont isolés le long des feuilles de $\mathcal{F}$;
\item [\texttt{4.}] Le degré du diviseur $\IF$ est $3d.$
\end{enumerate}

\noindent Le feuilletage $\mathcal{F}$ sera dit {\sl convexe} si son diviseur d'inflexion $\IF$ est totalement invariant par $\mathcal{F}$, {\it i.e.} si $\IF$ est le produit de droites invariantes.

\noindent L'application de Gauss est l'application rationnelle $\mathcal{G}_{\F}\hspace{1mm}\colon\pp\dashrightarrow \pd$ qui à un point régulier $m$ associe la droite tangente $\mathrm{T}_{\hspace{-0.1mm}m}\F$. Si $\mathcal{C}\subset\pp$ est une courbe passant par certains points singuliers de $\F$, on définit $\mathcal{G}_{\mathcal{F}}(\mathcal{C})$ comme étant l'adhérence de $\mathcal{G}_{\F}(\mathcal{C}\setminus\Sing\F)$. Il résulte de \cite[Lemme~2.2]{BFM13} que
\begin{equation}\label{equa:Delta-LegF}
\Delta(\Leg\F)=\mathcal{G}_{\F}(\ItrF)\cup\check{\Sigma}_{\F},
\end{equation}
où $\check{\Sigma}_{\F}$ désigne l'ensemble des droites duales des points de $\Sigma_{\F}:=\{s\in\Sing\F\hspace{0.8mm}:\hspace{0.8mm}\tau(\F,s)\geq2\}$.

\section{Géométrie des feuilletages homogènes}
\bigskip

\begin{defin}\label{def:feuilletage-homog}
Un feuilletage de degré $d$ sur $\mathbb{P}^{2}_{\mathbb{C}}$ est dit {\sl homogène} s'il existe une carte affine $(x,y)$ de $\mathbb{P}^{2}_{\mathbb{C}}$ dans laquelle il est invariant sous l'action du groupe des homothéties $(x,y)\longmapsto \lambda(x,y),\hspace{1mm} \lambda\in \mathbb{C}^{*}.$
\end{defin}

\noindent Un tel feuilletage $\mathcal{H}$ est alors défini par une $1$-forme $$\omega=A(x,y)\mathrm{d}x+B(x,y)\mathrm{d}y,$$ où $A$ et $B$ sont des polynômes homogènes de degré $d$ sans composante commune. Cette $1$-forme s'écrit en coordonnées homogènes
\begin{align*}
&\hspace{2mm} z\hspace{0.3mm}A(x,y)\mathrm{d}x+z\hspace{0.2mm}B(x,y)\mathrm{d}y-\left(x\hspace{0.2mm}A(x,y)+yB(x,y)\right)\mathrm{d}z\hspace{1mm};
\end{align*}
ainsi le feuilletage $\mathcal{H}$ a au plus $d+2$ singularités dont l'origine $O$ de la carte affine $z=1$ est le seul point singulier de $\mathcal{H}$ qui n'est pas situé sur la droite à l'infini $L_{\infty}=(z=0)$; de plus $\nu(\mathcal{H},O)=d.$

\noindent Dorénavant nous supposerons que $d$ est supérieur ou égal à $2.$ Dans ce cas le point $O$ est la seule singularité de $\mathcal{H}$ de multiplicité algébrique  $d.$

\noindent Le champ de vecteurs homogène $-B(x,y)\frac{\partial}{\partial x}+A(x,y)\frac{\partial}{\partial y}+0\frac{\partial}{\partial z}$ défini aussi le feuilletage $\mathcal{H}$ car est dans le noyau de la $1$-forme précédente; d'après la formule (\ref{equa:ext1}), le diviseur d'inflexion $\IH$ de $\mathcal{H}$ est donné par
\begin{equation*}
0=\left| \begin{array}{ccc}
x & \hspace{-1.5mm}-B &  BB_{x}-AB_{y} \\
y &  A &  AA_{y}-BA_{x}  \\
z &  0 &  0
\end{array} \right|
=z
\left| \begin{array}{cc}
-\frac{1}{d}(xB_{x}+yB_{y})                &  BB_{x}-AB_{y} \vspace{1.5mm}\\
\hspace{2.4mm}\frac{1}{d}(xA_{x}+yA_{y})   &  AA_{y}-BA_{x}
\end{array} \right|
=\frac{z}{d}(xA+yB)(A_{x}B_{y}-A_{y}B_{x})
=\frac{z}{d}\Cinv\Dtr,
\end{equation*}
où $\Cinv=xA+yB\in\mathbb{C}[x,y]_{d+1}$ désigne le {\sl cône tangent} de $\mathcal{H}$ en l'origine $O$ et $\Dtr=A_{x}B_{y}-A_{y}B_{x}\in\mathbb{C}[x,y]_{2d-2}$.

\noindent Il en résulte que:
\begin{enumerate}
\item [(i)] le support du diviseur $\IinvH$ est constitué des droites du cône tangent $\Cinv=0$ et de la droite à l'infini $L_{\infty}$;
\item [(ii)] le diviseur $\ItrH$ se décompose sous la forme $\ItrH=\prod_{i=1}^{n}T_{i}^{\rho_{i}-1}$ pour un certain nombre $n\leq \deg\Dtr=2d-2$ de droites $T_{i}$ passant par $O,$ $\rho_{i}-1$ étant l'ordre d'inflexion de la droite $T_{i}.$ Lorsque $\rho_{i}=2$ on parle d'une droite d'inflexion simple pour $\mathcal{H},$ lorsque $\rho_{i}=3$ d'une droite d'inflexion double, etc.
\end{enumerate}

\begin{pro}\label{pro:SingH}
{\sl Avec les notations précédentes, pour tout point singulier $s\in\mathrm{Sing}\mathcal{H}\cap L_{\infty},$ nous avons

\noindent\textbf{\textit{1.}} $\nu(\mathcal{H},s)=1;$

\noindent\textbf{\textit{2.}} la droite $L_{s}$ passant par l'origine $O$ et le point $s$ est invariante par $\mathcal{H}$ et elle apparaît avec multiplicité $\tau(\mathcal{H},s)-1$ dans le diviseur $\Dtr=0,$ {\it i.e.}
$$\Dtr=\ItrH\prod_{s\in\mathrm{Sing}\mathcal{H}\cap L_{\infty}}L_{s}^{\tau(\mathcal{H},s)-1}.$$
}
\end{pro}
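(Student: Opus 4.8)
\emph{Proposal.} Since $\mathcal H$ is homogeneous, $L_\infty$ is invariant and every singularity other than $O$ lies on $L_\infty$; such points are exactly the $s=[\alpha:\beta:0]$ with $\Cinv(\alpha,\beta)=\alpha A(\alpha,\beta)+\beta B(\alpha,\beta)=0$, i.e. the directions of the tangent cone. I would fix one such $s$ and, after possibly exchanging $x$ and $y$, assume $\alpha\neq0$, so $s=[1:u_0:0]$. Working in the affine chart $[1:u:v]$ of $\pp$, in which $L_\infty=\{v=0\}$, and writing $a(u)=A(1,u)$, $b(u)=B(1,u)$, a direct substitution turns $\omega$ into the local $1$-form $v\,b(u)\,\mathrm du-c(u)\,\mathrm dv$ with $c(u)=a(u)+u\,b(u)=\Cinv(1,u)$, dual to the vector field $X=c(u)\frac{\partial}{\partial u}+v\,b(u)\frac{\partial}{\partial v}$; the point $s$ is $u=u_0,\ v=0$, where $c(u_0)=0$.

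First I would establish $b(u_0)\neq0$: if $b(u_0)=0$, then $c(u_0)=0$ forces $a(u_0)=0$, so $(y-u_0x)$ divides both $A$ and $B$, contradicting $\pgcd(A,B)=1$. The linear part of $X$ at $s$ is $\mathrm{diag}(c'(u_0),b(u_0))$, hence nonzero, which gives statement \textbf{1}, $\nu(\mathcal H,s)=1$. For the invariance in \textbf{2}, note that $L_s$ is the line $\{u=u_0\}$ and $X(u-u_0)=c(u)$ is divisible by $u-u_0$ since $c(u_0)=0$; equivalently, the global computation $X(\beta x-\alpha y)=-(\alpha A+\beta B)$ vanishes on $L_s$ because $\Cinv(\alpha,\beta)=0$.

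The core of \textbf{2} is the multiplicity of $L_s$ in $\Dtr$. Using Euler's relation (so that $A_x(1,u)=d\,a-u\,a'$, $A_y(1,u)=a'$, and likewise for $B$), one obtains the identity
\[
\Dtr(1,u)=\bigl(A_xB_y-A_yB_x\bigr)\big|_{x=1}=d\,(a b'-a'b).
\]
Setting $g(u):=a(u)+u_0\,b(u)$, one checks simultaneously that $ab'-a'b=gb'-g'b$ and that $\det(X,\mathrm R_s)=v\,\bigl(c(u)-(u-u_0)b(u)\bigr)=v\,g(u)$, where $\mathrm R_s$ is the radial field centred at $s$; since $v$ divides this determinant exactly once, the characterization of the tangency order gives $\tau(\mathcal H,s)=\mathrm{ord}_{u_0}g$. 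Finally, as $b(u_0)\neq0$ and $\mathrm{ord}_{u_0}g=\tau(\mathcal H,s)\geq1$, writing $g=(u-u_0)^{\tau}h$ with $h(u_0)\neq0$ shows that the Wronskian $gb'-g'b$ vanishes to order exactly $\tau-1$ at $u_0$. Hence $\Dtr$ vanishes along $L_s$ to order $\tau(\mathcal H,s)-1$.

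To conclude, $\Dtr$ is homogeneous in $(x,y)$, hence a product of lines through $O$, each either invariant (necessarily a tangent-cone line $L_s$, by the characterization above) or transverse. The previous step identifies the $L_s$-part as $\prod_{s}L_s^{\tau(\mathcal H,s)-1}$. For the transverse part I would compare with $\IH=\frac{z}{d}\Cinv\Dtr$: since $z$ and $\Cinv$ are invariant, all transverse factors of $\IH$ come from $\Dtr$, so by definition of $\ItrH$ the transverse part of $\Dtr$ equals $\ItrH$. This yields $\Dtr=\ItrH\prod_{s\in\Sing\mathcal H\cap L_\infty}L_s^{\tau(\mathcal H,s)-1}$. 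The main obstacle is the third paragraph: reading off both $\tau(\mathcal H,s)$ and the vanishing order of $\Dtr$ from the same local data and matching them through the Wronskian identity; the exceptional direction $[0:1:0]$ is dealt with by the $x\leftrightarrow y$ symmetry.
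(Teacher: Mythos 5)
Your proof is correct and follows essentially the same route as the paper: working in an affine chart at a point at infinity, both arguments deduce $\nu(\mathcal{H},s)=1$ from $\pgcd(A,B)=1$, identify $\tau(\mathcal{H},s)$ with the order of vanishing at $s$ of the same polynomial (your $g(u)=a(u)+u_0\hspace{0.2mm}b(u)$ is exactly the paper's $x_0A(x,1)+B(x,1)$ with the roles of $x$ and $y$ swapped), and then check that $\Dtr$ vanishes along $L_s$ to exactly that order minus one. The only differences are presentational: you organize the multiplicity computation through the one-variable Wronskian $gb'-g'b$ where the paper expands $A_xB_y-A_yB_x$ after substituting $B=(x-x_0y)^{\tau}P-x_0A$, and you make explicit the final identification of the non-invariant part of $\Dtr$ with $\ItrH$, which the paper leaves to the discussion preceding the proposition.
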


\begin{proof}[\sl D\'emonstration]
Soit $s$ un point singulier de $\mathcal{H}$ sur $L_{\infty}=(z=0)$. Sans perte de généralité, nous pouvons supposer que les coordonnées homogènes de $s$ sont de la forme $[x_0:1:0],\,x_0\in\mathbb{C}.$ Dans la carte affine $y=1$, $\mathcal{H}$ est décrit par la $1$-forme
\begin{align*}
&\theta=z\hspace{0.3mm}A(x,1)\mathrm{d}x-\left(x\hspace{0.2mm}A(x,1)+B(x,1)\right)\mathrm{d}z\hspace{1mm};
\end{align*}
la condition $s\in\Sing\mathcal{H}$ est équivalente à $B(x_0,1)=-x_0\hspace{0.4mm}A(x_0,1).$ L'égalité $\pgcd(A,B)=1$ implique alors que $A(x_{0},1)\neq0$; d'où $\nu(\mathcal{H},s)=1.$

\noindent Montrons la seconde assertion. Le fait que $$\theta=A(x,1)\left(z\mathrm{d}(x-x_0)-(x-x_0)\mathrm{d}z\right)-\left(x_0\hspace{0.4mm}A(x,1)+B(x,1)\right)\mathrm{d}z$$
entraîne que $$\tau:=\tau(\mathcal{H},s)=\min\{k\geq1:J^{k}_{x_0}(x_0\hspace{0.4mm}A(x,1)+B(x,1))\neq0\},$$
cela permet d'écrire $x_0\hspace{0.4mm}A(x,1)+B(x,1)=\sum_{k=\tau}^{d}c_{k}(x-x_0)^{k}$, avec $c_{\tau}\neq0$. Par suite
\begin{align*}
&& B(x,y)=(x-x_0y)^{\tau}P(x,y)-x_0\hspace{0.4mm}A(x,y), \quad \text{où} \quad P(x,y)=\sum_{k=0}^{d-\tau}c_{k+\tau}(x-x_0y)^{k}y^{d-\tau-k}.
\end{align*}
Un calcul élémentaire montre que $\Dtr=A_{x}B_{y}-A_{y}B_{x}$ est de la forme $\Dtr=-(x-x_0y)^{\tau-1}Q(x,y),$ avec $Q\in\mathbb{C}[x,y]$ et $$Q(x_0,1)=\tau P(x_0,1)\left(xA_x+yA_y\right)\Big|_{(x,y)=(x_0,1)}.$$
Comme $P(x_0,1)=c_{\tau}$\, et \,$xA_x+yA_y=dA$,\, $Q(x_0,1)=\tau\hspace{0.2mm}c_{\tau}\hspace{0.2mm}dA(x_0,1)\neq0.$
\end{proof}

\begin{defin}\label{def:type-homog}
Soit $\mathcal{H}$ un feuilletage homogène de degré $d$ sur $\pp$ ayant un certain nombre $m\leq2d-2$ de singularités radiales $s_{i}$ d'ordre $\tau_{i}-1,$ $2\leq\tau_{i}\leq d$ pour $i=1,2,\ldots,m.$ Le support du diviseur $\ItrH$ est constitué d'un certain nombre $n\leq2d-2$ de droites d'inflexion transverse $T_{j}$ d'ordre $\rho_{j}-1,$ $2\leq\rho_{j}\leq d$ pour $j=1,2,\ldots,n.$ On définit le {\sl type du feuilletage} $\mathcal{H}$ par $$\mathcal{T}_\mathcal{H}=\sum\limits_{i=1}^{m}\mathrm{R}_{\tau_{i}-1}+\sum\limits_{j=1}^{n}\mathrm{T}_{\rho_{j}-1}=
\sum\limits_{k=1}^{d-1}(r_{k}\cdot\mathrm{R}_k+t_{k}\cdot\mathrm{T}_k)
\in\Z\left[\mathrm{R}_1,\mathrm{R}_2,\ldots,\mathrm{R}_{d-1},\mathrm{T}_1,\mathrm{T}_2,\ldots,\mathrm{T}_{d-1}\right]$$ et le {\sl degré du type} $\mathcal{T}_\mathcal{H}$ par $\deg\mathcal{T}_\mathcal{H}=\sum_{k=1}^{d-1}(r_{k}+t_{k})\in\N\setminus\{0,1\}$; c'est le nombre de droites distinctes qui composent le diviseur $\Dtr.$
\end{defin}

\begin{eg}
Considérons le feuilletage homogène $\mathcal{H}$ de degré $5$ sur $\pp$ défini par $$\omega=y^5\mathrm{d}x+2x^3(3x^2-5y^2)\mathrm{d}y.$$ Un calcul élémentaire conduit à
\begin{align*}
&& \mathrm{C}_{\mathcal{H}}=xy\left(6x^4-10x^2y^2+y^4\right) \qquad\text{et}\qquad \Dtr=150\hspace{0.15mm}x^2y^4(x-y)(x+y)\hspace{1mm};
\end{align*}
on constate que l'ensemble des singularités radiales de $\mathcal{H}$ est constitué des deux points $[0:1:0]$ et $[1:0:0]$; leurs ordres de radialité sont égaux respectivement à $2$ et $4.$ De plus le support du diviseur $\ItrH$ est formé des deux droites d'équations $x-y=0$ et $x+y=0$; ce sont des droites d'inflexion transverse simple. Donc le feuilletage $\mathcal{H}$ est du type $\mathcal{T}_\mathcal{H}=1\cdot\mathrm{R}_{2}+1\cdot\mathrm{R}_{4}+2\cdot\mathrm{T}_{1}$ et le degré de $\mathcal{T}_\mathcal{H}$ est $\deg\mathcal{T}_\mathcal{H}=4.$
\end{eg}

\noindent \`{A} tout feuilletage homogène $\mathcal{H}$ de degré $d$ sur $\pp$ on peut associer une application rationnelle $\Gunderline_{\mathcal{H}}\hspace{1mm}\colon\mathbb{P}^{1}_{\mathbb{C}}\rightarrow \mathbb{P}^{1}_{\mathbb{C}}$ de la façon suivante: si $\mathcal{H}$ est décrit par $\omega=A(x,y)\mathrm{d}x+B(x,y)\mathrm{d}y,$ $A$ et $B$ désignant des polynômes homogènes de degré $d$ sans facteur commun, on définit $\Gunderline_{\mathcal{H}}$ par $$\Gunderline_{\mathcal{H}}([x:y])=[-A(x,y):B(x,y)]\hspace{1mm};$$ il est clair que cette définition ne dépend pas du choix de la $1$-forme homogène $\omega$ décrivant le feuilletage $\mathcal{H}.$

\noindent Dorénavant nous noterons l'application $\Gunderline_{\mathcal{H}}$ simplement par $\Gunderline.$ Le feuilletage homogène $\mathcal{H}$ ainsi que son tissu dual $\mathrm{Leg}\mathcal{H}$ peuvent être décrits analytiquement en utilisant uniquement l'application $\Gunderline.$ En effet, la pente $p$ de $\mathrm{T}_{\hspace{-0.4mm}(x,y)}\mathcal{H}$ est donnée par $\Gunderline([x:y])=[p:1]$ et les pentes $x_{i}$ $(i=1,\ldots,d)$ de $\mathrm{T}_{\hspace{-0.4mm}(p,q)}\mathrm{Leg}\mathcal{H}$ sont données par $x_{i}=\dfrac{q}{p-p_{i}(p)},$ avec $\Gunderline^{-1}([p:1])=\{[p_{i}(p):1]\}.$

\noindent En carte affine $\mathbb{C}\subset\sph$ cette application s'écrit $\Gunderline\hspace{1mm}\colon z\mapsto-\dfrac{A(1,z)}{B(1,z)}.$ On a
\begin{align*}
\Gunderline(z)-z=-\dfrac{A(1,z)+zB(1,z)}{B(1,z)}=-\frac{\mathrm{C}_\mathcal{H}(1,z)}{B(1,z)}\hspace{1mm};
\end{align*}
\noindent de plus, les identités $dA=xA_x+yA_y$\hspace{1mm} et \hspace{1mm}$dB=xB_x+yB_y$
permettent de réécrire $\Dtr$ sous la forme  $\Dtr=-\dfrac{\raisebox{-0.5mm}{$d$}}{\raisebox{0.7mm}{$x$}}\left(BA_y-AB_y\right)$
de sorte que
\begin{align*}
\hspace{5mm}\Gunderline'(z)=-\left(\frac{BA_y-AB_y}{B^{2}}\right)\Big|_{(x,y)=(1,z)}=\frac{\Dtr(1,z)}{dB^{2}(1,z)}.
\end{align*}
\noindent On en déduit immédiatement les propriétés suivantes:

\begin{itemize}
\item [\texttt{1.}] les points fixes de $\Gunderline$ correspondent au cône tangent de $\mathcal{H}$ en l'origine $O$ (\textit{i.e.}  $[a:b]\in\mathbb{P}^{1}_{\mathbb{C}}$ est fixe par $\Gunderline$ si et seulement si la droite d'équation $by-a\hspace{0.2mm}x=0$ est invariante par $\mathcal{H}$\hspace{0.25mm});

\item [\texttt{2.}] le point $[a:b]\in\mathbb{P}^{1}_{\mathbb{C}}$ est critique fixe par $\Gunderline$ si et seulement si le point $[b:a:0]\in L_{\infty}$ est singulier radial de $\mathcal{H}$. La multiplicité du point critique $[a:b]$ de $\Gunderline$ est exactement égale à l'ordre de radialité de la singularité à l'infini;

\item [\texttt{3.}] le point $[a:b]\in\mathbb{P}^{1}_{\mathbb{C}}$ est critique non fixe par $\Gunderline$ si et seulement si la droite d'équation $by-a\hspace{0.2mm}x=0$ est une droite d'inflexion transverse pour $\mathcal{H}.$ La multiplicité du point critique $[a:b]$ de $\Gunderline$ est précisément égale à l'ordre d'inflexion de cette droite.
\end{itemize}

\begin{rem}\label{rem:2d-2-R1}
Pour qu'un feuilletage homogène de degré $d\geq2$ sur $\pp$ soit convexe de type $(2d-2)\cdot \mathrm{R}_1$ il faut que $d\in\{2,3\},$ car tout feuilletage homogène de degré $d$ sur $\pp$ a au plus $d+1$ points singuliers à l'infini. En fait, même en degré $d\in\{2,3\}$, le type $(2d-2)\cdot \mathrm{R}_1$ ne se produit pas. Ceci découle du fait bien connu qu'une application rationnelle de la sphère de \textsc{Riemann} dans elle-même a au moins un point fixe non critique (\emph{voir} par exemple \cite[Théorème~12.4]{Mil99}).
\end{rem}

\section{\'{E}tude de la platitude du tissu dual d'un feuilletage homogène}\label{sec:étude-platitude-homog}
\bigskip

\noindent La Proposition~$3.2$ de \cite{BFM13} est un critère de la platitude de la transformée de \textsc{Legendre} d'un feuilletage homogène de degré $3$. Notre premier résultat généralise ce critère en degré arbitraire.

\begin{thm}\label{thm:holomo-G(I^tr)}
{\sl Soit $\mathcal{H}$ un feuilletage homogène de degré $d\geq3$ sur $\pp.$ Alors le $d$-tissu $\Leg\mathcal{H}$ est plat si et seulement si sa courbure $K(\Leg\mathcal{H})$ est holomorphe sur $\G_{\mathcal{H}}(\ItrH).$
}
\end{thm}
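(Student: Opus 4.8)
The plan is to establish that the curvature $K(\Leg\mathcal{H})$, which a priori may have poles along all irreducible components of the discriminant $\Delta(\Leg\mathcal{H})$, is in fact automatically holomorphic along the non-transverse part, so that flatness reduces to checking holomorphy along $\mathcal{G}_{\mathcal{H}}(\ItrH)$. Recall from the decomposition formula~(\ref{equa:Delta-LegF}) that $\Delta(\Leg\mathcal{H})=\mathcal{G}_{\mathcal{H}}(\ItrH)\cup\check{\Sigma}_{\mathcal{H}}$, where $\check{\Sigma}_{\mathcal{H}}$ is the set of lines dual to the points $s\in\Sing\mathcal{H}$ with $\tau(\mathcal{H},s)\geq 2$. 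By the criterion recalled at the end of~\S\ref{subsec:courbure-platitude}, since there are no nonzero holomorphic $2$-forms on $\pd$, the web $\Leg\mathcal{H}$ is flat if and only if $K(\Leg\mathcal{H})$ is holomorphic along the generic points of every irreducible component of $\Delta(\Leg\mathcal{H})$. Thus it suffices to show that $K(\Leg\mathcal{H})$ is holomorphic along the generic point of each line in $\check{\Sigma}_{\mathcal{H}}$.

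First I would analyze the singular locus of $\mathcal{H}$ via Proposition~\ref{pro:SingH}. The relevant singularities contributing to $\check{\Sigma}_{\mathcal{H}}$ are the origin $O$ (with $\nu(\mathcal{H},O)=d$, hence $\tau\geq 2$) and the points $s\in\Sing\mathcal{H}\cap L_\infty$ with $\tau(\mathcal{H},s)\geq 2$, which by Proposition~\ref{pro:SingH} are exactly the radial singularities. The dual line $\check{O}$ is the line at infinity $L_\infty^\vee$ in $\pd$, and the dual line $\check{s}$ of a radial singularity $s$ corresponds in the $(p,q)$-chart to a line through the origin. The strategy is to use the explicit description of $\Leg\mathcal{H}$ through the rational map $\Gunderline$: the slopes $x_i=q/(p-p_i(p))$ with $\Gunderline^{-1}([p:1])=\{[p_i(p):1]\}$ give the defining equations of the component foliations, and the Blaschke curvature of each sub-$3$-web is computed from the $\eta_{rst}$ of~(\ref{equa:eta-rst}). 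The homogeneity should force a product/quasi-homogeneous structure on these slope functions that makes the curvature holomorphic along these particular components.

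The main obstacle, and the technical heart of the argument, is controlling the behavior of the curvature along the dual lines of the radial singularities and along $L_\infty^\vee$. The key observation to exploit is that a radial singularity of $\mathcal{H}$ at infinity forces the corresponding line $\check{s}\subset\pd$ to be \emph{invariant} by $\Leg\mathcal{H}$ rather than transverse (this is the dichotomy invoked in the introduction between invariant and transverse components of the discriminant). Along an invariant component of a web, the curvature of a decomposable web is known to be holomorphic automatically — intuitively, because along such a line one of the slope functions $x_i$ extends holomorphically and the potential pole in $\eta_{rst}$ cancels through the relations~(\ref{equa:eta-rst}). I would make this precise by computing, near the generic point of $\check{s}$, the local normal form of the $d$ slopes and verifying that the $\delta_{ij}$ factors entering $\eta_{rst}$ vanish to the orders needed to cancel the pole coming from $\mathrm{d}\log\delta_{ij}$. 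The same analysis applied at $L_\infty^\vee$, using $\nu(\mathcal{H},O)=d$ and the resulting behavior of $\Gunderline$ at its fixed points, disposes of the contribution of $\check{O}$.

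Putting these two holomorphy results together, $K(\Leg\mathcal{H})$ has no poles along any component of $\check{\Sigma}_{\mathcal{H}}$; hence by the flatness criterion the only possible obstruction to flatness lies along $\mathcal{G}_{\mathcal{H}}(\ItrH)$, which gives precisely the stated equivalence. I expect the degree hypothesis $d\geq 3$ to enter only to guarantee that $\mathcal{G}_{\mathcal{H}}(\ItrH)$ and $\check{\Sigma}_{\mathcal{H}}$ are genuinely distinct parts of the discriminant (for $d=2$ the transverse part is empty and the statement degenerates), while the bulk of the computation is uniform in $d$.
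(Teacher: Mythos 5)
Your reduction to the discriminant components and your treatment of the dual lines of the radial singularities follow the paper's route (decomposition $\Delta(\Leg\mathcal{H})=\G_{\mathcal{H}}(\ItrH)\cup\radHd\cup\check{O}$, then near a generic point of $\check{s}\in\radHd$ the local splitting $\Leg\mathcal{H}=\W_{n}\boxtimes\W_{d-n}$ with $\W_n$ leaving $\check{s}$ invariant). One imprecision there: holomorphy of the curvature along an invariant component is \emph{not} automatic; it requires the complementary transverse web $\W_{d-n}$ to be \emph{regular} at generic points of $\check{s}$, and this is exactly what the condition $\check{s}\not\subset\G_{\mathcal{H}}(\ItrH)$ guarantees (lines lying in both $\radHd$ and $\G_{\mathcal{H}}(\ItrH)$ are instead covered by the hypothesis). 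Your claim of unconditional holomorphy along \emph{all} of $\check{\Sigma}_{\mathcal{H}}$ is false as stated, but this part is easily repaired.

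The genuine gap is your treatment of $\check{O}$. In the chart $(p,q)$, $\check{O}=\{q=0\}$ and all $d$ slopes $x_i=q/(p-p_i(p))$ of $\Leg\mathcal{H}$ vanish simultaneously there: all branches of the web are mutually tangent to each other and to $\check{O}$, so there is no splitting into an invariant foliation plus a regular transverse web, and no cancellation of the poles of the $\eta_{rst}$ of the kind you invoke. Worse, holomorphy along $\check{O}$ cannot be established by any local computation, because it is \emph{equivalent} to the theorem itself: since $A,B$ are homogeneous, $\Leg\mathcal{H}$ is invariant under the homotheties $h_{\lambda}(a,b)=\lambda(a,b)$ in the affine chart of $\pd$ in which $\check{O}$ is the line at infinity, hence $K(\Leg\mathcal{H})=R(a,b)\,\mathrm{d}a\wedge\mathrm{d}b$ with $R$ rational satisfying $\lambda^{2}R(\lambda a,\lambda b)=R(a,b)$; writing $R=N/D$ with $N,D$ homogeneous and $\deg D=\deg N+2$, a change of chart at infinity shows this $2$-form has a pole of order one along $\check{O}$ unless $N\equiv0$. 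So $K(\Leg\mathcal{H})$ is holomorphic at generic points of $\check{O}$ if and only if it vanishes identically. The paper circumvents this with its Lemme~\ref{lem:holomo-O}: holomorphy on $\pd\setminus\check{O}$ forces $K(\Leg\mathcal{H})=P(a,b)\,\mathrm{d}a\wedge\mathrm{d}b$ with $P$ a \emph{polynomial}, and the scaling relation $\lambda^{2}P(\lambda a,\lambda b)=P(a,b)$ then gives $P\equiv0$. This global homogeneity argument, not a local analysis at $\check{O}$, is the missing ingredient; without it your proof cannot be completed.
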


\noindent Dans tout ce qui suit, $\mathcal{H}$ désigne un feuilletage homogène de degré $d\geq3$ sur $\pp$ défini, en carte affine $(x,y)$, par la $1$-forme
\begin{align*}
& \omega=A(x,y)\mathrm{d}x+B(x,y)\mathrm{d}y,\quad A,B\in\mathbb{C}[x,y]_{d},\hspace{2mm}\pgcd(A,B)=1.
\end{align*}

\noindent La démonstration de ce théorème utilise les deux lemmes suivants.

\begin{lem}\label{lem:Delta-LegH}
{\sl Le discriminant de $\Leg\mathcal{H}$ se décompose en $$\Delta(\Leg\mathcal{H})=\G_{\mathcal{H}}(\ItrH)\cup\radHd\cup\check{O},$$ où $\radHd$ désigne l'ensemble des droites duales des points de $\radH=\{s\in\Sing\mathcal{H}\hspace{0.8mm}:\hspace{0.8mm}\nu(\mathcal{H},s)=1,\,\tau(\mathcal{H},s)\geq2\}.$
}
\end{lem}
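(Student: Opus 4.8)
The plan is to derive this from the general decomposition (\ref{equa:Delta-LegF}), applied to the foliation $\mathcal{H}$, which reads
\[
\Delta(\Leg\mathcal{H})=\G_{\mathcal{H}}(\ItrH)\cup\check{\Sigma}_{\mathcal{H}},
\]
where $\check{\Sigma}_{\mathcal{H}}$ is the set of dual lines of the points of $\Sigma_{\mathcal{H}}=\{s\in\Sing\mathcal{H}:\tau(\mathcal{H},s)\geq2\}$. Since the first term is already the one appearing in the statement, the whole proof reduces to identifying $\Sigma_{\mathcal{H}}$ explicitly, namely to establishing $\Sigma_{\mathcal{H}}=\{O\}\cup\radH$; dualizing will then give $\check{\Sigma}_{\mathcal{H}}=\check{O}\cup\radHd$ and hence the asserted decomposition.

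To compute $\Sigma_{\mathcal{H}}$ I would split the singular set as $\Sing\mathcal{H}=\{O\}\cup(\Sing\mathcal{H}\cap L_{\infty})$, using the description of $\Sing\mathcal{H}$ recalled just before Proposition~\ref{pro:SingH}. For the origin one has $\nu(\mathcal{H},O)=d$, and since $\tau(\mathcal{H},s)\geq\nu(\mathcal{H},s)$ always holds while $d\geq3$, it follows that $\tau(\mathcal{H},O)\geq d\geq2$, so $O\in\Sigma_{\mathcal{H}}$. For a singular point $s\in\Sing\mathcal{H}\cap L_{\infty}$, Proposition~\ref{pro:SingH} yields $\nu(\mathcal{H},s)=1$; such an $s$ lies in $\Sigma_{\mathcal{H}}$ exactly when $\tau(\mathcal{H},s)\geq2$, which is the defining condition of $\radH$. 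Conversely, every point of $\radH$ has $\nu=1$ and therefore lies on $L_{\infty}$, because (as recalled before Proposition~\ref{pro:SingH}) for $d\geq2$ the origin is the only singularity of algebraic multiplicity $d$, hence the only one with $\nu\neq1$. Thus $\radH=\{s\in\Sing\mathcal{H}\cap L_{\infty}:\tau(\mathcal{H},s)\geq2\}$, and combining the two cases gives $\Sigma_{\mathcal{H}}=\{O\}\cup\radH$.

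It then remains only to dualize: the dual of the point $O$ is the line $\check{O}$ and the duals of the points of $\radH$ constitute $\radHd$, whence $\check{\Sigma}_{\mathcal{H}}=\check{O}\cup\radHd$. Substituting this into the general formula produces the desired decomposition $\Delta(\Leg\mathcal{H})=\G_{\mathcal{H}}(\ItrH)\cup\radHd\cup\check{O}$.

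I do not expect a genuine obstacle here: the argument is essentially a careful bookkeeping of the singular set of a homogeneous foliation, resting on the general formula (\ref{equa:Delta-LegF}) together with Proposition~\ref{pro:SingH}. The one point deserving emphasis is that the origin always contributes the extra component $\check{O}$, separate from the radial part $\radHd$; this is guaranteed by the inequality $\tau(\mathcal{H},O)\geq\nu(\mathcal{H},O)=d\geq2$, which holds precisely because $\mathcal{H}$ is homogeneous of degree $d\geq3$.
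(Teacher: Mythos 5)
Your proposal is correct and follows exactly the paper's own route: apply the general decomposition \eqref{equa:Delta-LegF} to $\mathcal{H}$, then use Proposition~\ref{pro:SingH} to identify $\Sigma_{\mathcal{H}}=\radH\cup\{O\}$ (the paper states this identification more tersely, while you spell out the bookkeeping, including $\tau(\mathcal{H},O)\geq\nu(\mathcal{H},O)=d\geq2$). There is no gap and no difference in method.
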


\begin{proof}[\sl D\'emonstration]
La formule (\ref{equa:Delta-LegF}) nous donne $\Delta(\Leg\mathcal{H})=\mathcal{G}_{\mathcal{H}}(\ItrH)\cup\check{\Sigma}_{\mathcal{H}}$, où $\check{\Sigma}_{\mathcal{H}}$ est l'ensemble des droites duales des points de $\Sigma_{\mathcal{H}}=\{s\in\Sing\mathcal{H}\hspace{0.8mm}:\hspace{0.8mm}\tau(\mathcal{H},s)\geq2\}$. D'après la première assertion de la Proposition~\ref{pro:SingH}, l'origine $O$ est le seul point singulier de $\mathcal{H}$ de multiplicité algébrique supérieure ou égale à $2$; par conséquent $\Sigma_{\mathcal{H}}=\radH\cup\{O\}.$
\end{proof}

\begin{lem}\label{lem:holomo-O}
{\sl
Si la courbure de $\Leg\mathcal{H}$ est holomorphe sur $\pd\hspace{-0.3mm}\setminus\hspace{-0.3mm} \check{O},$ alors $\Leg\mathcal{H}$ est plat.
}
\end{lem}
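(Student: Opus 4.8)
The plan is to exploit the homogeneity of $\mathcal{H}$ to pin down the local shape of $K(\Leg\mathcal{H})$ near $\check{O}$, and then to rule out a nonzero curvature by a degree count on $\pd$. The whole point is that homogeneity forces the pole of the curvature along $\check{O}$ to be very mild, so mild that no global curvature can survive.

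First I would reformulate the hypothesis. By the flatness criterion recalled in \S\ref{subsec:courbure-platitude}, $K(\Leg\mathcal{H})$ is a meromorphic $2$-form on $\pd$ whose polar locus is contained in $\Delta(\Leg\mathcal{H})$. Assuming it to be holomorphic on $\pd\setminus\check{O}$ therefore means exactly that its polar locus is contained in the single line $\check{O}$; in the affine chart $(p,q)$ of \S\ref{subsec-tissus}, where $\check{O}=\{q=0\}$, we may thus write $K(\Leg\mathcal{H})=f(p,q)\,\mathrm{d}p\wedge\mathrm{d}q$ with $f$ holomorphic on $\{q\neq0\}$.

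Second I would bring in the symmetry. Since $\mathcal{H}$ is homogeneous, it is preserved by every homothety $\phi_\lambda\colon(x,y)\mapsto\lambda(x,y)$, $\lambda\in\C^{*}$. As $\phi_\lambda$ sends the line $\{y=px-q\}$ to $\{y=px-\lambda q\}$, it induces on $\pd$ the automorphism $\check{\phi}_\lambda\colon(p,q)\mapsto(p,\lambda q)$; and because the leaves of $\Leg\mathcal{H}$ are precisely the tangent lines to the leaves of $\mathcal{H}$, the invariance $\phi_\lambda^{*}\mathcal{H}=\mathcal{H}$ translates into $\check{\phi}_\lambda^{*}\Leg\mathcal{H}=\Leg\mathcal{H}$. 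By the functoriality $K(\varphi^{*}\W)=\varphi^{*}K(\W)$ of the curvature, it follows that $\check{\phi}_\lambda^{*}K(\Leg\mathcal{H})=K(\Leg\mathcal{H})$ for every $\lambda$. Computing the pullback one finds $\check{\phi}_\lambda^{*}(f\,\mathrm{d}p\wedge\mathrm{d}q)=\lambda\,f(p,\lambda q)\,\mathrm{d}p\wedge\mathrm{d}q$, so invariance forces $f(p,\lambda q)=\lambda^{-1}f(p,q)$ for all $\lambda\in\C^{*}$; hence $f(p,q)=g(p)/q$ for some function $g$ of $p$ alone.

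Finally I would conclude by a global argument. The expression $K(\Leg\mathcal{H})=\frac{g(p)}{q}\,\mathrm{d}p\wedge\mathrm{d}q$ shows that the pole of $K(\Leg\mathcal{H})$ along the generic point of $\check{O}$ has order at most $1$; combined with the first step (no poles off $\check{O}$), this means that $K(\Leg\mathcal{H})$ is a global section of $\Omega^{2}_{\pd}\otimes\mathcal{O}_{\pd}(\check{O})\cong\mathcal{O}_{\pd}(-3)\otimes\mathcal{O}_{\pd}(1)=\mathcal{O}_{\pd}(-2)$. Since $H^{0}(\pd,\mathcal{O}_{\pd}(-2))=0$, we obtain $K(\Leg\mathcal{H})\equiv0$, i.e. $\Leg\mathcal{H}$ is flat. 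The step I expect to require the most care is the bookkeeping in the symmetry argument: one must verify that the homothety action dualizes to the vertical scaling $(p,q)\mapsto(p,\lambda q)$ and that $\Leg\mathcal{H}$ is genuinely $\check{\phi}_\lambda$-invariant, because everything downstream — the homogeneity of $f$ in $q$, and hence the crucial bound on the pole order along $\check{O}$ — rests on this. Once the invariance is secured, the degree count closes the argument automatically.
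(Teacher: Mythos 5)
Your proof is correct, and its engine is the same as the paper's: homogeneity of $\mathcal{H}$ makes $\Leg\mathcal{H}$ invariant under the automorphisms of $\pd$ induced by the homotheties, and the functoriality $K(\varphi^{*}\W)=\varphi^{*}K(\W)$ then constrains the curvature. The difference lies in the chart and in the finishing step. The paper works in the affine chart $(a,b)$ of $\pd$ associated to the lines $\{ax-by+1=0\}$: this chart is exactly $\pd\setminus\check{O}$, so $\check{O}$ is its line at infinity, the hypothesis says outright that $K(\Leg\mathcal{H})=P(a,b)\,\mathrm{d}a\wedge\mathrm{d}b$ with $P$ a \emph{polynomial}, and the scaling identity $\lambda^{2}P(\lambda a,\lambda b)=P(a,b)$ kills $P$ by inspection of homogeneous components --- no cohomology or divisor bookkeeping is needed. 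You instead stay in the chart $(p,q)$, where $\check{O}=\{q=0\}$ is at finite distance; the scaling symmetry $(p,q)\mapsto(p,\lambda q)$ gives the homogeneity $f(p,\lambda q)=\lambda^{-1}f(p,q)$, hence $f=g(p)/q$, i.e.\ a pole of order at most $1$ along $\check{O}$, and you close with the vanishing $H^{0}\bigl(\pd,\Omega^{2}_{\pd}\otimes\mathcal{O}_{\pd}(\check{O})\bigr)=H^{0}\bigl(\pd,\mathcal{O}_{\pd}(-2)\bigr)=0$. Both routes are sound; the paper's chart choice buys a completely elementary conclusion, while yours isolates a slightly sharper piece of information (the explicit order-one pole along $\check{O}$) at the price of invoking the twisted analogue of the fact, already used in \S\ref{subsec:courbure-platitude}, that $\pp$ carries no nonzero holomorphic $2$-forms. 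One small point worth making explicit in your write-up: the pole-order bound is obtained at generic points of $\check{O}$ (in the affine chart), which suffices for the divisor inequality since polar divisors are determined in codimension one.
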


\begin{proof}[\sl D\'emonstration]
Soit $(a,b)$ la carte affine de $\pd$ associée à la droite $\{ax-by+1=0\}\subset{\pp}$; le $d$-tissu $\mathrm{Leg}\mathcal{H}$ est donné par la $d$-forme symétrique $\check{\omega}=bA(\mathrm{d}b,\mathrm{d}a)+aB(\mathrm{d}b,\mathrm{d}a)$. L'homogénéité de $A$ et $B$ implique alors que toute homothétie $h_{\lambda}\hspace{1mm}\colon(a,b)\longmapsto\lambda(a,b)$ laisse invariant $\Leg \mathcal{H}$; par suite $$h_{\lambda}^{*}(K(\Leg \mathcal{H}))=K(\Leg \mathcal{H}).$$
En combinant l'hypothèse de l'holomorphie de la courbure en dehors de $\check{O}$ avec le fait que $\check{O}$ est la droite à l'infini dans la carte $(a,b),$ on constate que $K(\Leg \mathcal{H})=P(a,b)\mathrm{d}a\wedge\mathrm{d}b$ pour un certain $P\in\mathbb{C}[a,b].$ On déduit de ce qui précède que $\lambda^{2}P(\lambda\hspace{0.1mm}a,\lambda\hspace{0.1mm}b)=P(a,b),$ d'où l'énoncé.
\end{proof}

\begin{proof}[\sl D\'emonstration du Théorème~\ref{thm:holomo-G(I^tr)}]
L'implication directe est triviale. Montrons la réciproque; supposons que $K(\Leg\mathcal{H})$ soit holomorphe sur $\G_{\mathcal{H}}(\ItrH).$ D'après les Lemmes \ref{lem:Delta-LegH} et \ref{lem:holomo-O}, il suffit de prouver que $K(\Leg\mathcal{H})$ est holomorphe le long de $\Xi:=\radHd\setminus\G_{\mathcal{H}}(\ItrH).$ Supposons donc $\Xi$ non vide; soit $s$ une singularité radiale de $\mathcal{H}$ d'ordre $n-1$ telle que la droite $\check{s}$ duale de $s$ ne soit pas contenue dans $\G_{\mathcal{H}}(\ItrH).$ D'après \cite[Proposition~3.3]{MP13}, au voisinage de tout point générique $m$ de $\check{s},$ le tissu $\Leg\mathcal{H}$ peut se décomposer comme le produit $\W_{n}\boxtimes\W_{d-n},$ où $\W_{n}$ est un $n$-tissu irréductible laissant $\check{s}$ invariante et $\W_{d-n}$ est un $(d-n)$-tissu transverse à $\check{s}.$ De plus, la condition $\check{s}\not\subset\G_{\mathcal{H}}(\ItrH)$ nous assure que le tissu $\W_{d-n}$ est régulier au voisinage de $m.$ Par conséquent $K(\Leg\mathcal{H})$ est holomorphe au voisinage de $m,$ en vertu de \cite[Proposition~2.6]{MP13}.
\end{proof}

\begin{cor}
{\sl Soit $\mathcal{H}$ un feuilletage homogène convexe de degré $d$ sur le plan projectif. Alors le $d$-tissu $\Leg\mathcal{H}$ est plat.}
\end{cor}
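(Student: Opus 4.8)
The plan is to read this off Theorem~\ref{thm:holomo-G(I^tr)} almost immediately, the entire content being that convexity forces the transverse inflection divisor $\ItrH$ to be trivial, so that the holomorphy hypothesis appearing in that theorem becomes vacuous. I would organise the argument as a short degree reduction followed by a one-line application of the theorem.

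First I would dispose of the low-degree cases. Since $\Leg\mathcal{H}$ is a $d$-web on $\pd$, for $d\leq 2$ it possesses no $3$-subweb, so its curvature $K(\Leg\mathcal{H})$ is an empty sum and vanishes identically; hence $\Leg\mathcal{H}$ is flat with no hypothesis whatsoever. The only substantive range is therefore $d\geq 3$, which is exactly where Theorem~\ref{thm:holomo-G(I^tr)} applies.

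The key step is to unwind the definition of convexity. By definition $\mathcal{H}$ convex means that $\IH$ is totally invariant, i.e.\ a product of invariant lines. In the decomposition $\IH=\IinvH+\ItrH$ recalled in \S1.3, the support of $\ItrH$ is the closure of the inflection points that are isolated along the leaves of $\mathcal{H}$, hence contains no invariant curve; since every component of a totally invariant $\IH$ is an invariant line, I would conclude that $\ItrH=0$. Concretely, in the homogeneous model this is the assertion that $\Dtr=A_xB_y-A_yB_x$ reduces to the product $\prod_{s}L_s^{\tau(\mathcal{H},s)-1}$ of invariant lines through the radial singularities, exactly as in Proposition~\ref{pro:SingH}.

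Finally, with $\ItrH=0$ the image $\G_{\mathcal{H}}(\ItrH)$ is empty, so the condition \og $K(\Leg\mathcal{H})$ est holomorphe sur $\G_{\mathcal{H}}(\ItrH)$ \fg\ holds trivially, and Theorem~\ref{thm:holomo-G(I^tr)} yields the flatness of $\Leg\mathcal{H}$. I do not expect a genuine obstacle here: the difficult content, namely reducing flatness to holomorphy along the transverse part of the discriminant, is already carried by Theorem~\ref{thm:holomo-G(I^tr)} (together with Lemmes~\ref{lem:Delta-LegH} and~\ref{lem:holomo-O}), so the only thing to verify for this corollary is the implication ``convex $\Rightarrow\ItrH=0$'', which is immediate from the definitions.
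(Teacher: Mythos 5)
Your proposal is correct and follows exactly the route the paper intends: the corollary is stated without proof immediately after Th\'eor\`eme~\ref{thm:holomo-G(I^tr)}, precisely because convexity means $\IH=\IinvH$, hence $\ItrH=0$, so the holomorphy condition of that theorem is vacuous and flatness follows. Your additional remark disposing of the cases $d\leq 2$ (where the curvature is an empty sum) is a harmless refinement of the paper's standing assumption $d\geq 3$ in this section.
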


\noindent Le théorème suivant est un critère effectif d'holomorphie de la courbure (du tissu dual d'un feuilletage homogène) le long de l'image par l'application de \textsc{Gauss} d'une droite d'inflexion transverse simple, {\it i.e.} d'ordre d'inflexion minimal.

\begin{thm}\label{thm:Barycentre}
{\sl Soit $\mathcal{H}$ un feuilletage homogène de degré $d\geq3$ sur $\pp$ défini par la $1$-forme
$$\omega=A(x,y)\mathrm{d}x+B(x,y)\mathrm{d}y,\quad A,B\in\mathbb{C}[x,y]_d,\hspace{2mm}\pgcd(A,B)=1.$$
Supposons que $\mathcal{H}$ possède une droite d'inflexion $T=(ax+by=0)$ transverse et simple. Supposons en outre que $[-a:b]\in\mathbb{P}^{1}_{\mathbb{C}}$ soit le seul point critique de $\Gunderline$ dans sa fibre $\Gunderline^{-1}(\Gunderline([-a:b])).$ Posons $T'=\mathcal{G}_{\mathcal{H}}(T)$ et considérons la courbe $\Gamma_{(a,b)}$ de $\pp$ définie par
\begin{align*}
Q(x,y;a,b):=\left| \begin{array}{cc}
\dfrac{\partial{P}}{\partial{x}} &  A(b,-a)
\vspace{2mm}
\\
\dfrac{\partial{P}}{\partial{y}} &  B(b,-a)
\end{array} \right|=0,
\quad\text{où}\quad
P(x,y;a,b):=\frac{1}{(ax+by)^{2}}
\left| \begin{array}{cc}
A(x,y)  &  A(b,-a)
\\
B(x,y)  &  B(b,-a)
\end{array} \right|.
\end{align*}
Alors la courbure de $\mathrm{Leg}\mathcal{H}$ est holomorphe sur $T'$ si et seulement si $T=\{ax+by=0\}\subset\Gamma_{(a,b)},$ {\it i.e.} si et seulement si $Q(b,-a\hspace{0.2mm};a,b)=0.$
}
\end{thm}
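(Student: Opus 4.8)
The plan is to reduce the statement to a local computation of the polar part of $K(\Leg\mathcal{H})$ along the line $T'=\mathcal{G}_{\mathcal{H}}(T)$ and to identify this polar part with $Q(b,-a\hspace{0.2mm};a,b)$. First I would record the geometry of $T'$: since $\mathcal{H}$ is homogeneous, the tangent slope of $\mathcal{H}$ is constant along every line through $O$, so along $T$ it equals $p_0=-A(b,-a)/B(b,-a)$ and hence $T'=\{p=p_0\}$ in the affine chart $(p,q)$ of $\pd$, where $(b,-a)$ is the direction of $T$. The hypotheses that $T$ is a simple transverse inflection line and that $[-a:b]$ is the unique critical point of $\Gunderline$ in its fibre mean that, over a generic point of $T'$, exactly two preimages of $\Gunderline$ collide at a simple ramification point while the remaining $d-2$ stay distinct and unramified. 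By \cite[Proposition~3.3]{MP13} this yields, near a generic $m\in T'$, a decomposition $\Leg\mathcal{H}=\W_2\boxtimes\W_{d-2}$ in which $\W_2$ is a $2$-web admitting $T'$ as discriminant (a fold) and $\W_{d-2}$ is a smooth $(d-2)$-web transverse to $T'$.

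Next I would make the fold explicit. Writing $\Gunderline(x,y)=-A/B$, the identity $\Gunderline(x,y)-p_0=-\dfrac{(ax+by)^{2}\,P(x,y;a,b)}{B(x,y)\,B(b,-a)}$ exhibits the double contact responsible for the fold; it shows that the numerator $A(x,y)B(b,-a)-A(b,-a)B(x,y)$ is divisible by $(ax+by)^2$ precisely because the direction of $T$ is critical, and that $P$ is, up to the nonvanishing factor $B\,B(b,-a)$, the reduced slope-deviation from $p_0$ with the fold divided out, with $P(b,-a;a,b)\neq0$ by simplicity. Solving $\Gunderline=p$ near $p_0$ then gives the two colliding directions as a square root in $u:=p-p_0$, so the two fold slopes $x_{1,2}=q/(p-p_{1,2}(p))$ of $\W_2$ differ by a term of order $u^{1/2}$, while the slopes $x_k=q/(p-p_k(p))$, $k=3,\dots,d$, of $\W_{d-2}$ are holomorphic in $u$. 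Labelling the two fold leaves $\mathcal{F}_1,\mathcal{F}_2$ and the transverse ones $\mathcal{F}_3,\dots,\mathcal{F}_d$, and using $\delta_{ij}=x_j-x_i$ as in \eqref{equa:eta-rst}, the only $\delta$ vanishing on $T'$ is $\delta_{12}$; hence the only sources of poles of $K(\Leg\mathcal{H})=\sum_{1\le r<s<t\le d}\mathrm{d}\eta_{rst}$ along $T'$ are the sub-$3$-webs $\mathcal{F}_1\boxtimes\mathcal{F}_2\boxtimes\mathcal{F}_k$ containing both fold leaves, since $\W_{d-2}$ and all mixed pairs are regular there by \cite[Proposition~2.6]{MP13}.

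I would then compute the residue. Substituting the square-root expansions into the \textsc{Blaschke} curvature of each $\mathcal{F}_1\boxtimes\mathcal{F}_2\boxtimes\mathcal{F}_k$, the half-integer powers of $u$ must cancel under the $\mathbb{Z}/2$ monodromy exchanging the two fold sheets, leaving a meromorphic $2$-form whose simple-pole coefficient along $\{u=0\}$ is a sum over $k=3,\dots,d$. The key claim is that this sum collapses to a single nonzero constant multiple of the derivative of $P$ along $\mathcal{H}$ at the critical direction, that is, of $-B(b,-a)\,\partial_xP+A(b,-a)\,\partial_yP$ evaluated at $(b,-a)$, which equals $-Q(b,-a\hspace{0.2mm};a,b)$ since the vector field $-B\px+A\py$ describes $\mathcal{H}$. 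As no higher-order pole survives once this coefficient is accounted for, holomorphy of $K(\Leg\mathcal{H})$ along $T'$ is equivalent to $Q(b,-a\hspace{0.2mm};a,b)=0$. Finally, because $Q(\cdot\,;a,b)$ is homogeneous, divisibility of $Q$ by the linear form $ax+by$ — that is, $T=\{ax+by=0\}\subset\Gamma_{(a,b)}$ — is equivalent to the vanishing of $Q$ at the direction $(b,-a)$ of $T$, giving the stated reformulation.

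The hard part will be the residue computation of the previous paragraph: proving that the a priori complicated sum over the $d-2$ transverse leaves telescopes into the single barycentric quantity $Q(b,-a\hspace{0.2mm};a,b)$. This requires organizing the square-root expansion of $x_{1,2}$ to sufficient order, verifying that the half-integer contributions cancel, and recognizing the surviving sum as the leaf-derivative of $P$ at the inflection direction; showing that the residue depends on the transverse directions only through this barycenter is the crux that justifies the name of the theorem. By contrast, the reduction to a fold, the localization, and the homogeneity–divisibility equivalence are routine given \cite[Propositions~2.6 et 3.3]{MP13} and the structure of $\Delta(\Leg\mathcal{H})$ from Lemme~\ref{lem:Delta-LegH}.
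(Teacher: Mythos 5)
Your setup is sound and matches the paper's: the reduction to $T'=\{p=p_0\}$, the local decomposition $\Leg\mathcal{H}=\W_2\boxtimes\W_{d-2}$ near a generic point of $T'$ via \cite[Proposition~3.3]{MP13}, the identity $\Gunderline-p_0=-(ax+by)^2P/\bigl(B(x,y)\,B(b,-a)\bigr)$ with $P(b,-a;a,b)\neq0$ by simplicity, and the closing homogeneity/divisibility remark are all correct. But there is a genuine gap exactly where you place ``the key claim'' and ``the hard part'': the assertion that the polar part of $K(\Leg\mathcal{H})$ along $T'$ collapses to a nonzero multiple of $-B(b,-a)\partial_xP+A(b,-a)\partial_yP$ evaluated at $(b,-a)$ \emph{is} the content of the theorem, and you give no argument for it --- neither the cancellation of the half-integer powers, nor the identification of the residue, nor the conversion of that residue into the leaf-derivative of $P$. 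A plan that defers precisely this computation to future work is not a proof.

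What your proposal misses is that this step need not be done by hand: it is exactly \cite[Th\'eor\`eme~1]{MP13}, which says that for such a decomposition $\W_2\boxtimes\W_{d-2}$ along a component $T'$ of the discriminant, $K(\Leg\mathcal{H})$ is holomorphic along $T'$ if and only if $T'$ is invariant by the barycenter $\beta_{\W_2}(\W_{d-2})$. The paper cites this criterion, writes the slopes on $T'$ explicitly ($x_0=q/(\Gunderline(r)-r)$ and $x_i=q/(\Gunderline(r)-z_i)$, where $\Gunderline^{-1}(\Gunderline(r))=\{r,z_1,\ldots,z_{d-2}\}$), so that invariance of $T'$ becomes $\sum_{i=1}^{d-2}1/(x_i-x_0)=0$, and then turns this into $Q(b,-a\hspace{0.2mm};a,b)=0$ by noting that the $z_i$ are the roots of $F(z)=\bigl(A(1,z)+\Gunderline(r)B(1,z)\bigr)/(z-r)^2$ --- a rescaled $P$ --- whence $\sum_i 1/(r-z_i)=F'(r)/F(r)$, and finishing with Euler's identity. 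These last algebraic steps are also absent from your sketch, since you fold them into the unproven claim. If you insist on the direct Laurent-expansion route, you would essentially be re-proving \cite[Th\'eor\`eme~1]{MP13}; that is what the paper actually does for the opposite extreme case of a maximal-order inflection line (Th\'eor\`eme~\ref{thm:Divergence}), where no barycenter criterion is available, and there it is a substantial computation --- organizing the expansions in $(p-\Gunderline(r))^{1/d}$ and evaluating trigonometric-type sums --- not a routine verification.
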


\begin{rem}\label{rem:Q(b,-a,a,b)}
L'hypothèse que $T=(ax+by=0)$ est une droite d'inflexion pour $\mathcal{H}$ implique que $P\in\mathbb{C}[x,y]_{d-2}$ et donc $Q\in\mathbb{C}[x,y]_{d-3}.$ En particulier lorsque $d=3$ on a $$Q(b,-a\hspace{0.2mm};a,b)=\frac{\Cinv\left(B(b,-a),-A(b,-a)\right)}{\left(\Cinv(b,-a)\right)^2}\hspace{1mm};$$ en effet si on pose $\tilde{a}=A(b,-a)$, $\tilde{b}=B(b,-a)$ et $P(x,y;a,b)=f(a,b)x+g(a,b)y$ on obtient
\begin{align*}
Q(b,-a\hspace{0.2mm};a,b)=f(a,b)\tilde{b}-g(a,b)\tilde{a}=P(\tilde{b},-\tilde{a}\hspace{0.2mm};a,b)
=\frac{\tilde{b}A(\tilde{b},-\tilde{a})-\tilde{a}B(\tilde{b},-\tilde{a})}{(a\tilde{b}-b\tilde{a})^2}
=\frac{\Cinv\left(\tilde{b},-\tilde{a}\right)}{\left(\Cinv(b,-a)\right)^2}.
\end{align*}
\end{rem}

\begin{proof}[\sl D\'emonstration]
\`{A} isomorphisme linéaire près on peut se ramener à $T=(y=rx)$; si $(p,q)$ est la carte affine de $\pd$ associée à la droite $\{y=px-q\}\subset{\mathbb{P}^{2}_{\mathbb{C}}},$ alors $T'=(p=\Gunderline(r))$ avec $\Gunderline(z)=-\dfrac{A(1,z)}{B(1,z)}.$ Comme l'indice de ramification de $\Gunderline$ en $z=r$ est égal à $2$ et comme $z=r$ est l'unique point critique dans sa fibre $\Gunderline^{-1}(\Gunderline(r))$, cette fibre est formée de $d-1$ points distincts, soit $\Gunderline^{-1}(\Gunderline(r))=\{r,z_{1},z_{2},\ldots,z_{d-2}\}.$ De plus, au voisinage de tout point générique de $T'$, le tissu dual de $\mathcal{H}$ se décompose en $\Leg\mathcal{H}=\W_{2}\boxtimes\W_{d-2}$ avec
\begin{align*}
& \W_{2}\Big|_{T'}=\left(\mathrm{d}q-x_{0}(q)\mathrm{d}p\right)^{2}
\qquad\text{et}\qquad
\W_{d-2}\Big|_{T'}=\prod_{i=1}^{d-2}\left(\mathrm{d}q-x_{i}(q)\mathrm{d}p\right),
\end{align*}
où $x_{0}(q)=\dfrac{q}{\Gunderline(r)-r}$\, et\, $x_{i}(q)=\dfrac{q}{\Gunderline(r)-z_{i}},\,i=1,2,\ldots,d-2.$ D'après \cite[Théorème~1]{MP13}, $K(\Leg\mathcal{H})$ est holomorphe le long de $T'$ si et seulement si $T'$ est invariante par le barycentre de $\W_{d-2}$ par rapport à $\W_{2}.$ Or la restriction de $\beta_{\W_{2}}(\W_{d-2})$ à $T'$ est donnée par $\mathrm{d}q-\beta(q)\mathrm{d}p=0$ avec $$\beta=x_{0}+\dfrac{1}{\frac{1}{d-2}\sum\limits_{i=1}^{d-2}\dfrac{1}{x_{i}-x_{0}}}.$$
\noindent Ainsi la courbure de $\Leg\mathcal{H}$ est holomorphe sur $T'$ si et seulement si $\beta=\infty,$ {\it i.e.} si et seulement si $\sum_{i=1}^{d-2}\dfrac{1}{x_{i}-x_{0}}=0,$ car $x_{0}\neq\infty$ ($z=r$ est non fixe par $\Gunderline$). Cette dernière condition se réécrit
\begin{equation}\label{equa:Barycentre-implicite}
\hspace{0.8cm}0=\sum\limits_{i=1}^{d-2}\frac{\Gunderline(r)-z_{i}}{r-z_{i}}=d-2+\left(\Gunderline(r)-r\right)\sum\limits_{i=1}^{d-2}\frac{1}{r-z_{i}}.
\end{equation}
D'autre part les $z_{i}$ sont exactement les racines du polynôme $$F(z):=\dfrac{P(1,z\hspace{0.2mm};-r,1)}{B(1,r)}=\dfrac{A(1,z)+\Gunderline(r)B(1,z)}{(z-r)^{2}}$$ et donc
\begin{align*}
\sum\limits_{i=1}^{d-2}\frac{1}{r-z_{i}}
\hspace{0.3mm}=\hspace{0.3mm}
\sum\limits_{i=1}^{d-2}\left(\frac{1}{F(r)}\prod\limits_{\underset{j\neq i}{j=1}}^{d-2}(r-z_{j})\right)
\hspace{0.3mm}=\hspace{0.3mm}
\frac{1}{F(r)}\sum\limits_{i=1}^{d-2}\hspace{0.5mm}\prod\limits_{\underset{j\neq i}{j=1}}^{d-2}(r-z_{j})
\hspace{0.3mm}=\hspace{0.3mm}
\frac{F'(r)}{F(r)}.
\end{align*}
Ainsi l'équation (\ref{equa:Barycentre-implicite}) est équivalente à $(\Gunderline(r)-r)F'(r)+(d-2)F(r)=0,$ {\it i.e.} à
\begin{equation}\label{equa:Barycentre-explicite}
\hspace{2.2cm}(d-2)P(1,r\hspace{0.2mm};-r,1)+\left(\Gunderline(r)-r\right)\frac{\partial{P}}{\partial{y}}\Big|_{(x,y)=(1,r)}=0\hspace{1mm};
\end{equation}
comme $P\in\mathbb{C}[x,y]_{d-2}$ on peut réécrire (\ref{equa:Barycentre-explicite}) sous la forme
\begin{align*}
\hspace{2.3cm}\left((d-2)P(x,y\hspace{0.2mm};-r,1)-y\frac{\partial{P}}{\partial{y}}+x\Gunderline(r)\frac{\partial{P}}{\partial{y}}\right)\Big|_{y=rx}=0\hspace{1mm};
\end{align*}
celle-ci peut à son tour s'écrire
\begin{align*}
\hspace{-1.3cm}\left(\frac{\partial{P}}{\partial{x}}+\Gunderline(r)\frac{\partial{P}}{\partial{y}}\right)\Big|_{y=rx}=0,
\end{align*}
en vertu de l'identité d'\textsc{Euler}. Il en résulte que $K(\Leg\mathcal{H})$ est holomorphe le long de $T'$ si et seulement si
\begin{align*}
\left(B(1,r)\frac{\partial{P}}{\partial{x}}-A(1,r)\frac{\partial{P}}{\partial{y}}\right)\Big|_{y=rx}=0.
\end{align*}
\end{proof}

\begin{rem}\label{rem:Gh(Gh(r))=Gh(r)}
En degré $3$ l'équation (\ref{equa:Barycentre-implicite}) s'écrit $\dfrac{\Gunderline(r)-z_{1}}{r-z_{1}}=0$; ainsi la courbure du $3$-tissu $\Leg\mathcal{H}$ est holomorphe sur $T'=(p=\Gunderline(r))$ si et seulement si $\Gunderline(r)=z_{1}$, {\it i.e.} si et seulement si $\Gunderline(\Gunderline(r))=\Gunderline(r).$
\end{rem}

\noindent Le théorème suivant est un critère effectif d'holomorphie de la courbure (du tissu dual d'un feuilletage homogène) le long de l'image par l'application de \textsc{Gauss} d'une droite d'inflexion transverse d'ordre maximal.

\begin{thm}\label{thm:Divergence}
{\sl Soit $\mathcal{H}$ un feuilletage homogène de degré $d\geq3$ sur $\pp$ défini par la $1$-forme
$$\omega=A(x,y)\mathrm{d}x+B(x,y)\mathrm{d}y,\quad A,B\in\mathbb{C}[x,y]_d,\hspace{2mm}\pgcd(A,B)=1.$$
Supposons que $\mathcal{H}$ possède une droite d'inflexion transverse $T$ d'ordre maximal $d-1$ et posons $T'=\mathcal{G}_{\mathcal{H}}(T).$ Alors la courbure de $\mathrm{Leg}\mathcal{H}$ est holomorphe le long de $T'$ si et seulement si la $2$-forme $\mathrm{d}\omega$ s'annule sur la droite $T.$
}
\end{thm}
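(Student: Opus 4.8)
The plan is to follow the same strategy that proves Theorem~\ref{thm:Barycentre}, but adapted to the extreme case where the transverse inflection line $T$ has maximal order $d-1$. Up to a linear change of coordinates I would write $T=(y=rx)$, so that in the affine chart $(p,q)$ of $\pd$ associated to $\{y=px-q\}$ the line $T'=\mathcal{G}_{\mathcal{H}}(T)$ is $(p=\Gunderline(r))$. The key structural fact is that, since the order of inflexion of $T$ equals $d-1$, the ramification index of $\Gunderline$ at $z=r$ is exactly $d-1$ (property~\texttt{3.} of the list following the definition of $\Gunderline$). Hence the whole fibre $\Gunderline^{-1}(\Gunderline(r))$ consists of the single point $r$ with full multiplicity, and near a generic point of $T'$ the dual web degenerates as $\Leg\mathcal{H}=\W_{d-1}$, an irreducible $(d-1)$-web leaving $T'$ invariant, \emph{with no transverse regular complement}. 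This is precisely the opposite extreme from Theorem~\ref{thm:Barycentre}, where the fibre split into $r$ plus $d-2$ distinct simple points.

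First I would invoke the relevant local criterion from \cite{MP13}: for an irreducible $(d-1)$-web of maximal tangency along an invariant line, the holomorphy of the Blaschke curvature along that line is governed not by a barycenter condition but by the behaviour of the defining equation to higher order. Concretely, the implicit presentations from the preliminaries give $\Leg\mathcal{H}$ near $T'$ via $\check{F}(p,q;x)=A(x,px-q)+pB(x,px-q)=0$, and I would expand this around the totally ramified fibre, extracting the curvature as a meromorphic $2$-form with poles along $T'$. The goal is to show that the order of the pole drops to zero exactly when a single scalar condition holds.

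Next I would translate that scalar condition back into intrinsic data of $\mathcal{H}$. Writing $\omega=A\,\mathrm{d}x+B\,\mathrm{d}y$ with $A,B$ homogeneous of degree $d$, the $2$-form $\mathrm{d}\omega=(B_x-A_y)\,\mathrm{d}x\wedge\mathrm{d}y$ restricts to $T=(y=rx)$; the claim of the theorem is that holomorphy of $K(\Leg\mathcal{H})$ along $T'$ is equivalent to $(B_x-A_y)|_{y=rx}=0$. I expect the computation to reduce, via the identity $\Dtr=A_xB_y-A_yB_x$ and the Euler relations $dA=xA_x+yA_y$, $dB=xB_x+yB_y$ used earlier, to comparing the leading coefficient of $\check F$ in $(x-r)$ against the quantity $B_x-A_y$ evaluated on the line. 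Because $T$ has maximal inflection order, $\Dtr$ vanishes to order $d-2$ along $T$, so several terms will collapse and the surviving obstruction should be exactly the divergence $B_x-A_y$ of the dual vector field $(-B,A)$ restricted to $T$, which is the coordinate-free content of $\mathrm{d}\omega|_T=0$.

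The main obstacle I anticipate is the curvature computation for a \emph{totally ramified}, irreducible web: unlike the simple-inflection case, one cannot split off a regular transverse factor and apply the clean barycenter formula of \cite[Th\'eor\`eme~1]{MP13}. Instead I must analyze the curvature of $\W_{d-1}$ intrinsically near its singular invariant line $T'$, presumably by passing to a local uniformization (a ramified cover trivializing the $(d-1)$-web, as described in \S\ref{subsec:courbure-platitude}) and computing the descended $\eta(\W_{d-1})$ explicitly. Controlling the pole order of $\mathrm{d}\eta$ along $T'$ in this degenerate situation, and checking that the $d-2$ ``extra'' orders of vanishing of $\Dtr$ exactly cancel the naive pole, is where the real work lies; once that cancellation is made precise, identifying the residual term with $(B_x-A_y)|_{T}$ and hence with $\mathrm{d}\omega|_T$ should be a direct, if delicate, algebraic verification using homogeneity.
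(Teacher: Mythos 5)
Your general direction (expand the dual web at the totally ramified fibre, reduce holomorphy of the curvature to one scalar condition, identify that condition with $\mathrm{d}\omega|_{T}=0$) is indeed the paper's strategy, but your structural setup contains errors and the decisive computation is left undone. On the numerics: a transverse inflection line of maximal order $d-1$ corresponds to a critical point of $\Gunderline$ of \emph{multiplicity} $d-1$, which by Lemme~\ref{lem:critique-maximale} means local ramification index $d$ (not $d-1$, as you state -- your own conclusion that the fibre is the single point $r$ is incompatible with index $d-1$); correspondingly $\Dtr$ vanishes to order $d-1$ along $T$, not $d-2$. More seriously, near a generic point of $T'$ the web $\Leg\mathcal{H}$ is an irreducible \emph{$d$}-web, not a $(d-1)$-web, and $T'$ is \emph{not} invariant by it: since $r$ is not fixed by $\Gunderline$, all $d$ branches along $T'$ have the common finite slope $q/(\Gunderline(r)-r)$, so they are mutually tangent to each other and transverse to $T'$. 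This matters because the ``local criterion from \cite{MP13}'' you invoke for an irreducible web along an invariant line does not exist in that form; the tools from \cite{MP13} used elsewhere in the paper (the barycenter criterion behind Th\'eor\`eme~\ref{thm:Barycentre}, and Proposition~3.3 of \cite{MP13} on webs with an invariant line, used in Th\'eor\`eme~\ref{thm:holomo-G(I^tr)}) are precisely the ones unavailable here, which is why this case requires a self-contained argument.

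The missing core -- which you explicitly defer as ``where the real work lies'' -- is carried out in the paper as follows. One writes the $d$ slopes as $\lambda_{i}(p)=\frac{1}{\Gunderline(r)-r}+\varphi\bigl(\zeta^{i}(p-\Gunderline(r))^{1/d}\bigr)$ with $\zeta=\mathrm{e}^{2\mathrm{i}\pi/d}$ and a single analytic germ $\varphi$, computes $\eta(\Leg\mathcal{H})$ from the defining formulas (\ref{equa:eta-rst}) and (\ref{equa:eta}), and, using elementary identities on the $d$-th roots of unity, shows that the polar part of $K(\Leg\mathcal{H})$ along $T'$ is a nonzero multiple of $\varphi''(0)$. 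Then Lemme~\ref{lem:critique-maximale} gives the normal form $A=-\Gunderline(r)B-c\,(y-rx)^{d}$, $B=x^{d}+\sum_{i\geq1}b_{i}(y-rx)^{i}x^{d-i}$, from which one computes that both $\varphi''(0)$ and $\mathrm{d}\omega|_{y=rx}$ are nonzero multiples of $d+b_{1}(\Gunderline(r)-r)$; this identification is the theorem. Without performing this expansion (or an equivalent one), your assertion that ``the pole order drops exactly when a single scalar condition holds, and that condition is $\mathrm{d}\omega|_{T}=0$'' remains an expectation rather than a proof: as written, the proposal has a genuine gap at its center, compounded by the incorrect description of the web's local structure along $T'$.
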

\noindent La démonstration de ce théorème utilise le lemme technique suivant, qui nous sera aussi utile ultérieurement.

\begin{lem}\label{lem:critique-maximale}
{\sl Soit $f:\mathbb{P}^{1}_{\mathbb{C}}\rightarrow\mathbb{P}^{1}_{\mathbb{C}}$ une application rationnelle de degré $d;f(z)=\dfrac{a(z)}{b(z)}$ avec $a$ et $b$ des polynômes sans facteur commun et $\max(\deg a,\deg b)=d.$ Soit $z_{0}\in\mathbb{C}$ tel que $f(z_0)\neq\infty.$ Alors, $z_0$ est un point critique de $f$ de multiplicité $m-1$ si et seulement s'il existe un polynôme $c\in\mathbb{C}[z]$ de degré $\leq d-m$ vérifiant $c(z_0)\neq0$ et tel que $a(z)=f(z_0)b(z)+c(z)(z-z_0)^{m}.$
}
\end{lem}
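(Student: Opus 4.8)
The plan is to reduce this statement about the critical point of $f$ to an elementary factorization statement about the numerator of $f(z)-f(z_0)$. The crucial preliminary observation is that since $\pgcd(a,b)=1$ and $f(z_0)\neq\infty$, the denominator cannot vanish at $z_0$: if $b(z_0)=0$ then $a(z_0)\neq0$ and $z_0$ would be a pole. Hence $b(z_0)\neq0$, so $f$ is holomorphic at $z_0$ and its local behaviour near $z_0$ is governed entirely by the numerator.

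First I would set $g(z):=a(z)-f(z_0)\,b(z)\in\mathbb{C}[z]$ and write
\[
f(z)-f(z_0)=\frac{a(z)}{b(z)}-f(z_0)=\frac{g(z)}{b(z)}.
\]
Since $b(z_0)\neq0$, the order of vanishing of $f-f(z_0)$ at $z_0$ coincides with that of $g$ at $z_0$; observe moreover that $g(z_0)=a(z_0)-f(z_0)b(z_0)=0$, so $z_0$ is automatically a zero of $g$. The next step is to translate the notion of critical multiplicity: by definition $z_0$ is a critical point of $f$ of multiplicity $m-1$ exactly when $f'$ vanishes to order $m-1$ at $z_0$, and since $f$ takes a finite value at $z_0$ this is equivalent to $f(z)-f(z_0)$ vanishing to order exactly $m$, i.e. to $\mathrm{ord}_{z_0}g=m$. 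By the factor theorem, $\mathrm{ord}_{z_0}g=m$ holds if and only if $g(z)=c(z)(z-z_0)^{m}$ for some $c\in\mathbb{C}[z]$ with $c(z_0)\neq0$, which is precisely the asserted identity $a(z)=f(z_0)b(z)+c(z)(z-z_0)^{m}$.

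The degree bound $\deg c\leq d-m$ then follows at once from $\deg g\leq\max(\deg a,\deg b)=d$, whence $\deg c=\deg g-m\leq d-m$. The converse direction is read off the same chain of equivalences in reverse: given the identity with $c(z_0)\neq0$, one recovers $\mathrm{ord}_{z_0}g=m$, then $\mathrm{ord}_{z_0}(f-f(z_0))=m$ (again using $b(z_0)\neq0$), and finally the critical multiplicity $m-1$. There is essentially no obstacle in this argument; the only point requiring a little care is the degree bookkeeping, and in particular the fact that the bound on $\deg c$ is an inequality rather than an equality, which reflects the possible cancellation of leading terms of $a$ and $f(z_0)b$ when $\deg a=\deg b=d$.
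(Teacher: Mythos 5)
Your proof is correct and follows essentially the same route as the paper's: both translate the critical multiplicity into the statement that $f(z)-f(z_0)$ vanishes to order exactly $m$ at $z_0$, clear the denominator to obtain the polynomial identity $a(z)-f(z_0)b(z)=c(z)(z-z_0)^{m}$, and deduce the degree bound $\deg c\leq d-m$ from $\deg\bigl(a-f(z_0)b\bigr)\leq d$. The only cosmetic difference is that the paper phrases the first step via Taylor's formula with a holomorphic germ $h$ and then observes that $c=hb$ must be polynomial, whereas you work with the polynomial $g=a-f(z_0)b$ and the factor theorem directly, making the auxiliary fact $b(z_0)\neq0$ explicit.
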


\begin{proof}[\sl D\'emonstration] D'après la formule de \textsc{Taylor}, l'assertion $z=z_0$ est un point critique de $f$ de multiplicité $m-1$ se traduit par  $f(z)=f(z_0)+h(z)(z-z_0)^m,$ avec $h(z_0)\neq0.$ Par suite $$a(z)-f(z_0)b(z)=c(z)(z-z_0)^m$$ avec $c(z):=h(z)b(z)$, $c(z_0)\neq0$; comme le membre de gauche est un polynôme en $z$ de degré $\leq d$ celui de droite aussi. On constate alors que la fonction $c(z)$ est polynomiale en $z$ de degré $\leq d-m$, d'où l'énoncé.
\end{proof}

\begin{proof}[\sl D\'emonstration du Théorème~\ref{thm:Divergence}]
On peut se ramener à $T=(y=rx)$; si $(p,q)$ est la carte affine de $\pd$ associée à la droite $\{y=px-q\}\subset{\mathbb{P}^{2}_{\mathbb{C}}},$ alors $T'=(p=\Gunderline(r))$ avec $\Gunderline(z)=-\dfrac{A(1,z)}{B(1,z)}.$ De plus, le $d$-tissu $\Leg\mathcal{H}$ est décrit par $\prod_{i=1}^{d}\check{\omega}_i$, où $\check{\omega}_i=\dfrac{\mathrm{d}q}{q}-\lambda_{i}(p)\mathrm{d}p,$\hspace{1mm} $\lambda_{i}(p)=\dfrac{1}{p-p_{i}(p)}$\hspace{1mm} et \hspace{1mm}$\{p_{i}(p)\}=\Gunderline^{-1}(p).$

\noindent
En appliquant les formules (\ref{equa:eta-rst}) et (\ref{equa:eta})  à $\Leg\mathcal{H}=\W(\check{\omega}_1,\check{\omega}_2,\ldots,\check{\omega}_d)$ on constate que $\eta(\mathrm{Leg}\mathcal{H})$ s'écrit sous la forme
\begin{align*}
\hspace{-4.22cm}\eta(\mathrm{Leg}\mathcal{H})=\alpha(p)\mathrm{d}p+\dfrac{\mathrm{d}q}{q}\sum_{1\le i<j<k\le d}\beta_{ijk}(p),
\end{align*}
avec
\begin{align*}
\beta_{ijk}(p)=
\dfrac{-\lambda_{i}'}{(\lambda_{i}-\lambda_{j})(\lambda_{i}-\lambda_{k})}+
\dfrac{-\lambda_{j}'}{(\lambda_{j}-\lambda_{i})(\lambda_{j}-\lambda_{k})}+
\dfrac{-\lambda_{k}'}{(\lambda_{k}-\lambda_{i})(\lambda_{k}-\lambda_{j})}.
\end{align*}
Comme le point $z=r$ est critique non fixe pour $\Gunderline$ de multiplicité $d-1,$ il existe un isomorphisme analytique $\varphi:(\mathbb{C},0)\rightarrow(\mathbb{C},0)$ tel qu'au voisinage de $T'$ on ait
\begin{align*}
\hspace{-1.5cm}\lambda_{i}(p)=\frac{1}{\Gunderline(r)-r}+\varphi\left(\zeta^{i}\left(p-\Gunderline(r)\right)^{\frac{1}{d}}\right), \quad\text{avec}\hspace{2mm}
\zeta=\mathrm{e}^{2\mathrm{i}\pi/d}.
\end{align*}
Notons que
\begin{align*}
\hspace{1.5cm}\lambda_{i}'(p)=\frac{1}{d}\left(p-\Gunderline(r)\right)^{\frac{1-d}{d}}\left[\zeta^{i}\varphi'(0)+
\zeta^{2i}\varphi''(0)\left(p-\Gunderline(r)\right)^{\frac{1}{d}}+
o\left((p-\Gunderline(r))^{\frac{1}{d}}\right)\right],
\end{align*}
et
\begin{align*}
\hspace{-1.2cm}\lambda_{i}(p)-\lambda_{j}(p)=\left(p-\Gunderline(r)\right)^{\frac{1}{d}}\varphi'(0)(\zeta^{i}-\zeta^{j})+
o\left((p-\Gunderline(r))^{\frac{1}{d}}\right).
\end{align*}
Il s'en suit que
\begin{align*}
\hspace{1.1mm}\beta_{ijk}(p)=\left(p-\Gunderline(r)\right)^{-1-\frac{1}{d}}\tilde{\beta}_{ijk}\left((p-\Gunderline(r))^{\frac{1}{d}}\right),
\quad\hspace{1mm}\text{avec}\hspace{1mm}\quad
\tilde{\beta}_{ijk}(z)\in\mathbb{C}\{z\}.
\end{align*}
En fait, si $\langle i',j',k'\rangle$ désigne trois permutations circulaires de $i,j$ et $k,$ on a
\begin{align*}
\hspace{-3.5cm}\tilde{\beta}_{ijk}(0)=-\frac{1}{d\varphi'(0)}\underbrace{\sum_{\langle i',\,j',\,k'\rangle}\frac{\zeta^{i'}}{(\zeta^{i'}-\zeta^{j'})(\zeta^{i'}-\zeta^{k'})}}_{0}=0,
\end{align*}
et
\begin{align*}
\hspace{-1.8cm}\tilde{\beta}'_{ijk}(0)=\frac{\varphi''(0)}{2d\varphi'(0)^{2}}\underbrace{\sum_{\langle i',\,j',\,k'\rangle}\frac{\zeta^{i'}(\zeta^{j'}+\zeta^{k'})}{(\zeta^{i'}-\zeta^{j'})(\zeta^{i'}-\zeta^{k'})}}_{-1}=-\frac{\varphi''(0)}{2d\varphi'(0)^{2}}.
\end{align*}
En posant $\beta(z):=\sum_{1\le i<j<k\le d}\beta_{ijk}(z)$ \hspace{2mm}et\hspace{2mm} $\tilde{\beta}(z):=\sum_{1\le i<j<k\le d}\tilde{\beta}_{ijk}(z),$ on obtient que
\begin{align*}
\hspace{-4.7cm}\beta(p)=\left(p-\Gunderline(r)\right)^{-1-\frac{1}{d}}\tilde{\beta}\left((p-\Gunderline(r))^{\frac{1}{d}}\right).
\end{align*}
Comme $K(\Leg\mathcal{H})=\mathrm{d}\hspace{0.1mm}\eta(\mathrm{Leg}\mathcal{H})=\dfrac{\beta'(p)}{q}\mathrm{d}p\wedge\mathrm{d}q$ et comme $\beta(p)\in\mathbb{C}\{p-\Gunderline(r)\}\Big[\dfrac{1}{p-\Gunderline(r)}\Big],$ on  déduit que $K(\Leg\mathcal{H})$ est holomorphe le long de $T'=(p=\Gunderline(r))$ si et seulement si $\tilde{\beta}(z)\in\raisebox{0.3mm}{$z$}\hspace{0.15mm}\mathbb{C}\{\raisebox{0.3mm}{$z^{d}$}\}$ satisfait la condition
\begin{align*}
&0=\tilde{\beta}'(0)=\sum_{1\le i<j<k\le d}\tilde{\beta}'_{ijk}(0)=-\binom{{d}}{{3}}\frac{\varphi''(0)}{2d\varphi'(0)^{2}},
\end{align*}
\textit{i.e.} si et seulement si $\varphi''(0)=0.$

\noindent D'après le Lemme~\ref{lem:critique-maximale}, le fait que $z=r$ est un point critique (non fixe) de $\Gunderline$ de multiplicité $d-1$ se traduit par $-A(1,z)=\Gunderline(r)B(1,z)+c(z-r)^{d},$ pour un certain $c\in\mathbb{C}^{*}.$ Par suite
\begin{align*}
\hspace{0.15cm}A(x,y)=-\Gunderline(r)B(x,y)-c(y-rx)^{d} \qquad\text{et}\qquad B(x,y)=b_{0}x^{d}+\sum_{i=1}^{d}b_{i}(y-rx)^{i}x^{d-i}.
\end{align*}
Puisque $b_{0}=B(1,r)\neq0,$ on peut supposer sans perte de généralité que $b_{0}=1.$ Ainsi
\begin{align*}
\mathrm{d}\omega\Big|_{y=rx}=\left(d+b_{1}(\Gunderline(r)-r)\right)x^{d-1}\mathrm{d}x\wedge\mathrm{d}y.
\end{align*}
D'autre part, $\Gunderline(z)=\Gunderline(r)+\dfrac{c(z-r)^{d}}{1+b_{1}(z-r)+\cdots}$ et, pour tout $p\in\mathbb{P}^{1}_{\mathbb{C}}$ suffisamment voisin de $\Gunderline(r),$ l'équation $\Gunderline(z)=p$ est équivalente à
\begin{align*}
\hspace{-1.45cm}\left(p-\Gunderline(r)\right)^{\frac{1}{d}}=\frac{c^{\frac{1}{d}}(z-r)}{\sqrt[d]{1+b_{1}(z-r)+\cdots}}
=c^{\frac{1}{d}}(z-r)\left[1-\frac{1}{d}b_{1}(z-r)+\cdots\right].
\end{align*}
Par suite les $p_{i}(p)\in\Gunderline^{-1}(p)$ s'écrivent
\begin{small}
\begin{align*}
\hspace{-4.22cm}p_{i}(p)=r+\frac{1}{c^{\frac{1}{d}}}\zeta^{i}\left(p-\Gunderline(r)\right)^{\frac{1}{d}}+
\frac{b_{1}}{dc^{\frac{2}{d}}}\zeta^{2i}\left(p-\Gunderline(r)\right)^{\frac{2}{d}}+\cdots
\end{align*}
\end{small}
\hspace{-1mm}et donc
\begin{small}
\begin{align*}
\hspace{0.8cm}p-p_{i}(p)=\left(\Gunderline(r)-r\right)-\frac{1}{c^{\frac{1}{d}}}\zeta^{i}\left(p-\Gunderline(r)\right)^{\frac{1}{d}}-
\frac{b_{1}}{dc^{\frac{2}{d}}}\zeta^{2i}\left(p-\Gunderline(r)\right)^{\frac{2}{d}}+\cdots+\left(p-\Gunderline(r)\right)+\cdots.
\end{align*}
\end{small}
\hspace{-1mm}Par conséquent
\begin{align*}
\hspace{0.65cm}\lambda_{i}(p)=\dfrac{1}{p-p_{i}(p)}=\frac{1}{\Gunderline(r)-r}+\varphi'(0)\zeta^{i}\left(p-\Gunderline(r)\right)^{\frac{1}{d}}
+\frac{\varphi''(0)}{2}\zeta^{2i}\left(p-\Gunderline(r)\right)^{\frac{2}{d}}+\cdots,
\end{align*}
avec
\begin{align*}
\hspace{0.44cm}\varphi'(0)=\frac{1}{c^{\frac{1}{d}}(\Gunderline(r)-r)^{2}}\neq0
\qquad\text{et}\qquad
\varphi''(0)=\frac{2}{dc^{\frac{2}{d}}(\Gunderline(r)-r)^{3}}\left[d+b_{1}(\Gunderline(r)-r)\right],
\end{align*}
ce qui termine la démonstration.
\end{proof}

\noindent Comme conséquence immédiate des Théorèmes \ref{thm:holomo-G(I^tr)}, \ref{thm:Barycentre}, \ref{thm:Divergence} et de la Remarque \ref{rem:Q(b,-a,a,b)} nous obtenons la caractérisation suivante de la platitude de la transformée de \textsc{Legendre} d'un feuilletage homogène de degré $3$ sur le plan projectif.
\begin{cor}\label{cor:platitude-degre-3}
{\sl Soit $\mathcal{H}$ un feuilletage homogène de degré $3$ sur $\pp$ défini par la $1$-forme $$\omega=A(x,y)\mathrm{d}x+B(x,y)\mathrm{d}y,\quad A,B\in\mathbb{C}[x,y]_3,\hspace{2mm}\pgcd(A,B)=1.$$ Alors, le $3$-tissu $\Leg\mathcal{H}$ est plat si et seulement si les deux conditions suivantes sont satisfaites:
\begin{itemize}
\item [\textit{(1)}] pour toute droite d'inflexion de $\mathcal{H}$ transverse et simple $T_1=(ax+by=0),$ la droite d'équation $A(b,-a)x+B(b,-a)y=0$ est invariante par $\mathcal{H};$

\item [\textit{(2)}] pour toute droite d'inflexion de $\mathcal{H}$ transverse et double $T_{2},$ la $2$-forme $\mathrm{d}\omega$ s'annule sur $T_{2}.$
\end{itemize}
En particulier, si le feuilletage $\mathcal{H}$ est convexe alors $\Leg\mathcal{H}$ est plat.
}
\end{cor}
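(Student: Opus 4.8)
Le plan est de ramener la platitude de $\Leg\mathcal{H}$ à un test composante par composante le long de $\G_{\mathcal{H}}(\ItrH)$, puis d'appliquer les deux critères effectifs déjà établis selon que la droite d'inflexion transverse est simple ou double. Par le Théorème~\ref{thm:holomo-G(I^tr)}, $\Leg\mathcal{H}$ est plat si et seulement si $K(\Leg\mathcal{H})$ est holomorphe le long de $\G_{\mathcal{H}}(\ItrH)$. Comme $\ItrH=\prod_i T_i^{\rho_i-1}$, le support de $\G_{\mathcal{H}}(\ItrH)$ est la réunion des courbes $T_i'=\G_{\mathcal{H}}(T_i)$, et l'holomorphie de la courbure étant une condition locale le long de chaque composante irréductible, il suffit de caractériser l'holomorphie de $K(\Leg\mathcal{H})$ le long de chaque $T_i'$ pris séparément. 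En degré $3$ l'ordre d'inflexion $\rho_i-1$ d'une droite transverse vérifie $1\leq\rho_i-1\leq d-1=2$; chaque $T_i$ est donc soit simple (ordre $1$) soit double (ordre $2$, maximal).

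Pour une droite d'inflexion transverse simple $T_1=(ax+by=0)$, je vérifierais d'abord que l'hypothèse du Théorème~\ref{thm:Barycentre} est automatique. L'application $\Gunderline$ est de degré $3$ (car $A,B\in\mathbb{C}[x,y]_3$ sont premiers entre eux), et le point critique non fixe $[-a:b]$ correspondant à $T_1$ y est de multiplicité $1$, c'est-à-dire d'indice de ramification $2$. Dans la fibre $\Gunderline^{-1}(\Gunderline([-a:b]))$ la somme des indices de ramification vaut $d=3$; celui de $[-a:b]$ valant $2$, il ne reste qu'une unité, portée par un unique point non ramifié: ainsi $[-a:b]$ est bien le seul point critique de $\Gunderline$ dans sa fibre. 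Le Théorème~\ref{thm:Barycentre} s'applique alors et donne: $K(\Leg\mathcal{H})$ est holomorphe le long de $T_1'$ si et seulement si $Q(b,-a\hspace{0.2mm};a,b)=0$. Par la Remarque~\ref{rem:Q(b,-a,a,b)}, cette condition s'écrit $\Cinv\bigl(B(b,-a),-A(b,-a)\bigr)=0$ (le dénominateur $\Cinv(b,-a)$ étant non nul puisque $T_1$ est transverse, donc non invariante). Or annuler $\Cinv$ au point $(B(b,-a),-A(b,-a))\neq(0,0)$ revient à dire que ce point appartient au cône tangent de $\mathcal{H}$; comme les droites de $\Cinv=0$ sont exactement les droites invariantes de $\mathcal{H}$, ceci équivaut à l'invariance de la droite joignant $O$ à ce point, à savoir $A(b,-a)x+B(b,-a)y=0$. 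C'est précisément la condition \textit{(1)}.

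Pour une droite d'inflexion transverse double $T_2$, l'ordre $2=d-1$ est maximal, et le Théorème~\ref{thm:Divergence} s'applique directement: $K(\Leg\mathcal{H})$ est holomorphe le long de $T_2'=\G_{\mathcal{H}}(T_2)$ si et seulement si $\mathrm{d}\omega$ s'annule sur $T_2$, ce qui est la condition \textit{(2)}. En recollant ces équivalences sur toutes les composantes de $\G_{\mathcal{H}}(\ItrH)$ on obtient la caractérisation annoncée. Le cas convexe en découle immédiatement: $\mathcal{H}$ convexe signifie $\ItrH=0$, il n'y a alors aucune droite d'inflexion transverse, les conditions \textit{(1)} et \textit{(2)} sont vides, et $\Leg\mathcal{H}$ est plat.

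L'obstacle principal me paraît être le deuxième paragraphe, pour deux raisons: d'une part la vérification que l'hypothèse de \og point critique unique dans sa fibre \fg{} du Théorème~\ref{thm:Barycentre} est toujours réalisée en degré $3$ (argument de comptage de ramification ci-dessus, propre au degré $3$); d'autre part la traduction géométrique, via la Remarque~\ref{rem:Q(b,-a,a,b)} et la description du cône tangent, de l'égalité $Q(b,-a\hspace{0.2mm};a,b)=0$ en l'invariance exacte de la droite $A(b,-a)x+B(b,-a)y=0$. Il conviendra enfin de s'assurer que les réductions internes aux Théorèmes~\ref{thm:Barycentre} et~\ref{thm:Divergence} (choix d'une carte où $\G_{\mathcal{H}}(T)$ est à distance finie) demeurent licites, quitte à composer au préalable par un automorphisme de $\pp$.
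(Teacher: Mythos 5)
Votre preuve est correcte et suit essentiellement la m\^eme d\'emarche que l'article, qui d\'eduit le corollaire comme cons\'equence imm\'ediate des Th\'eor\`emes \ref{thm:holomo-G(I^tr)}, \ref{thm:Barycentre}, \ref{thm:Divergence} et de la Remarque \ref{rem:Q(b,-a,a,b)}. Vos compl\'ements (le comptage de ramification montrant que l'hypoth\`ese du Th\'eor\`eme \ref{thm:Barycentre} est automatiquement v\'erifi\'ee en degr\'e $3$, et la traduction de $Q(b,-a\hspace{0.2mm};a,b)=0$ en l'invariance de la droite $A(b,-a)x+B(b,-a)y=0$ via le c\^one tangent) sont pr\'ecis\'ement les d\'etails que l'article laisse implicites.
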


\section{Platitude et feuilletages homogènes de type appartenant à $\Z\left[\mathrm{R}_1,\mathrm{R}_2,\ldots,\mathrm{R}_{d-1},\mathrm{T}_1,\mathrm{T}_{d-1}\right]$}\label{sec:type-min-max}
\bigskip

\noindent Nous nous proposons dans ce paragraphe de décrire certaines feuilletages homogènes de degré $d\geq3$ sur $\pp$, de type appartenant à $\Z\left[\mathrm{R}_1,\mathrm{R}_2,\ldots,\mathrm{R}_{d-1},\mathrm{T}_1,\mathrm{T}_{d-1}\right]$ et dont le $d$-tissu dual est plat. Nous considérons ici un feuilletage homogène $\mathcal{H}$ de degré $d\geq3$ sur $\pp$ défini, en carte affine $(x,y),$ par $$\omega=A(x,y)\mathrm{d}x+B(x,y)\mathrm{d}y,\quad A,B\in\mathbb{C}[x,y]_d,\hspace{2mm}\pgcd(A,B)=1.$$ L'application rationnelle $\Gunderline\hspace{1mm}\colon\mathbb{P}^{1}_{\mathbb{C}}\rightarrow \mathbb{P}^{1}_{\mathbb{C}}$, $\Gunderline(z)=-\dfrac{A(1,z)}{B(1,z)},$ nous sera très utile pour établir les énoncés qui suivent.

\begin{pro}\label{pro:omega1-omega2}
{\sl Si $\deg\mathcal{T}_{\mathcal{H}}=2,$ alors le $d$-tissu $\Leg\mathcal{H}$ est plat si et seulement si $\mathcal{H}$ est linéairement conjugué à l'un des deux feuilletages $\mathcal{H}_{1}^{d}$ et $\mathcal{H}_{2}^{d}$ décrits respectivement par les $1$-formes
\begin{itemize}
\item [\texttt{1. }] $\omega_1^{\hspace{0.2mm}d}=y^d\mathrm{d}x-x^d\mathrm{d}y\hspace{0.5mm};$
\item [\texttt{2. }] $\omega_2^{\hspace{0.2mm}d}=x^d\mathrm{d}x-y^d\mathrm{d}y.$
\end{itemize}
}
\end{pro}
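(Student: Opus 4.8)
The plan is to reduce everything to the dynamics of the rational map $\Gunderline(z)=-A(1,z)/B(1,z)$ and then run a short case analysis. First, by Definition~\ref{def:type-homog} and the dictionary relating the critical points of $\Gunderline$ to $\Dtr$ (the properties listed just after the computation of $\Gunderline'$, and already exploited in Remark~\ref{rem:2d-2-R1}), the hypothesis $\deg\mathcal{T}_{\mathcal{H}}=2$ means precisely that $\Gunderline$ has exactly two critical points in $\sph$. Since $\Gunderline$ has degree $d$, the Riemann--Hurwitz formula gives a total critical multiplicity $2d-2$; as each critical multiplicity is at most $d-1$, the two critical points are both totally ramified of multiplicity $d-1$. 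A linear conjugation of $\mathcal{H}$ acts on $\Gunderline$ by conjugation in $\mathrm{PGL}_2(\mathbb{C})$, so I may place the two critical points at $z=0$ (the line $y=0$) and $z=\infty$ (the line $x=0$); the resulting map then takes the form $\Gunderline=M(z^d)$ for some $M\in\mathrm{PGL}_2(\mathbb{C})$.

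Next I would distinguish the two critical points according to the dichotomy fixed/non-fixed, that is radial singularity versus transverse inflection line of maximal order $d-1$; this is governed by whether $M(0)=0$ and whether $M(\infty)=\infty$, yielding the three types $2\mathrm{R}_{d-1}$, $\mathrm{R}_{d-1}+\mathrm{T}_{d-1}$ and $2\mathrm{T}_{d-1}$. If both points are fixed then $M(w)=\lambda w$, so $\Gunderline=\lambda z^d$; a residual diagonal conjugation normalizes $\lambda=1$, and recovering $A,B$ from the reduced fraction shows that $\mathcal{H}$ is linearly conjugate to $\mathcal{H}_1^d$. In this case $\ItrH=0$, so Theorem~\ref{thm:holomo-G(I^tr)} makes $\Leg\mathcal{H}$ flat with no further work. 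In the two remaining types every transverse inflection line has the maximal order $d-1$, so by Theorems~\ref{thm:holomo-G(I^tr)} and~\ref{thm:Divergence} flatness is equivalent to the vanishing of $\mathrm{d}\omega$ along each such line; note that Theorem~\ref{thm:Barycentre} is not needed here, since no simple transverse inflection line occurs.

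The computational core is then to read $\mathrm{d}\omega|_T=0$ off the coefficients. Writing $A=\sum_k a_k x^{d-k}y^k$ and $B=\sum_k b_k x^{d-k}y^k$, one has $\mathrm{d}\omega=(B_x-A_y)\,\mathrm{d}x\wedge\mathrm{d}y$, whence $\mathrm{d}\omega|_{y=0}=0\iff d\,b_0=a_1$ and $\mathrm{d}\omega|_{x=0}=0\iff b_{d-1}=d\,a_d$. Recovering $A,B$ from $\Gunderline=M(z^d)=\tfrac{\alpha z^d+\beta}{\gamma z^d+\delta}$ gives $A=-(\beta x^d+\alpha y^d)$ and $B=\delta x^d+\gamma y^d$, so that $a_1=b_{d-1}=0$. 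In the mixed type $\mathrm{R}_{d-1}+\mathrm{T}_{d-1}$ (say $z=0$ fixed, whence $\beta=0$, and $z=\infty$ transverse, whence $\alpha\neq0$; the other labelling is symmetric) the only condition to check is $b_{d-1}=d\,a_d$, i.e. $0=-d\alpha$, which is impossible; hence no foliation of this type is flat. In the type $2\mathrm{T}_{d-1}$ (so $\beta,\gamma\neq0$) both conditions must hold, forcing $\delta=0$ and $\alpha=0$; then $\Gunderline=\tfrac{\beta}{\gamma z^d}$, $\omega\propto x^d\mathrm{d}x-y^d\mathrm{d}y$, and a diagonal conjugation shows that $\mathcal{H}$ is linearly conjugate to $\mathcal{H}_2^d$. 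Conversely, $\mathcal{H}_2^d$ satisfies both vanishing conditions, so its dual is flat.

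The main obstacle I expect is the bookkeeping of the normalizations: one must use only \emph{conjugations} of $\Gunderline$ (not independent M\"obius changes on the source and target) so that each reduction corresponds to a genuine linear conjugation of $\mathcal{H}$, and one must correctly match the two critical points of $\Gunderline$ to the lines $x=0,\,y=0$ and to the radial/transverse dichotomy before invoking Theorem~\ref{thm:Divergence}, which applies only to the maximal-order inflection lines occurring here. Once this is set up, the only genuine computation is the elementary coefficient identity for $\mathrm{d}\omega|_T$, and the classification falls out of the three cases.
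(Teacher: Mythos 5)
Your proof is correct and follows essentially the same route as the paper's: the same split into the three types $2\cdot\mathrm{R}_{d-1}$, $1\cdot\mathrm{R}_{d-1}+1\cdot\mathrm{T}_{d-1}$ and $2\cdot\mathrm{T}_{d-1}$, the same flatness criteria (Th\'eor\`emes~\ref{thm:holomo-G(I^tr)} et~\ref{thm:Divergence}, with \ref{thm:Barycentre} indeed never needed), and the same final computation of $\mathrm{d}\omega$ restricted to the transverse inflection lines. The only difference is one of packaging: you obtain the normal form once and for all by conjugating both totally ramified critical points to $0$ and $\infty$, so that $\Gunderline=M(z^d)$ and $A=-(\beta x^d+\alpha y^d)$, $B=\delta x^d+\gamma y^d$, whereas the paper reaches equivalent normal forms case by case via le Lemme~\ref{lem:critique-maximale} with other placements of the critical points --- your variant makes the coefficient bookkeeping slightly cleaner but is the same argument in substance.
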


\begin{proof}[\sl D\'emonstration]
L'égalité $\deg\mathcal{T}_{\mathcal{H}}=2$ est réalisée si et seulement si nous sommes dans l'une des situations suivantes
\begin{itemize}
\item [(i)] $\mathcal{T}_{\mathcal{H}}=2\cdot\mathrm{R}_{d-1}\hspace{1mm};$

\item [(ii)] $\mathcal{T}_{\mathcal{H}}=2\cdot\mathrm{T}_{d-1}\hspace{1mm};$

\item [(iii)] $\mathcal{T}_{\mathcal{H}}=1\cdot\mathrm{R}_{d-1}+1\cdot\mathrm{T}_{d-1}.$
\end{itemize}
Commençons par étudier l'éventualité (i). Nous pouvons supposer à conjugaison près que les deux singularités radiales de $\mathcal{H}$ sont $[0:1:0]$ et $[1:0:0]$, ce qui revient à supposer que les points $\infty=[1:0],\,[0:1]\in\mathbb{P}^{1}_{\mathbb{C}}$ sont critiques fixes de $\Gunderline$, de même multiplicité $d-1$. Cela se traduit par le fait que $A(x,y)=ay^d$ et $B(x,y)=bx^d$, avec $ab\neq0,$ en vertu du Lemme~\ref{lem:critique-maximale}. Par suite $\omega=ay^d\mathrm{d}x-(-b)x^d\mathrm{d}y$ et nous pouvons évidemment normaliser les coefficients $a$ et $-b$ à $1.$ Ainsi $\mathcal{H}$ est conjugué au feuilletage $\mathcal{H}_{1}^{d}$ décrit par $\omega_1^{\hspace{0.2mm}d}=y^d\mathrm{d}x-x^d\mathrm{d}y$; le $d$-tissu $\Leg\mathcal{H}_{1}^{d}$ est plat car $\mathcal{H}_{1}^{d}$ est convexe.

\noindent Intéressons-nous à la possibilité (ii). \`{A} isomorphisme linéaire près nous pouvons nous ramener à la situation suivante:
\begin{itemize}
\item [$\bullet$] les points $[0:1],\,[1:1]\in\mathbb{P}^{1}_{\mathbb{C}}$ sont critiques non fixes de $\Gunderline$, de même multiplicité $d-1$;

\item [$\bullet$] $\Gunderline(0)$ et $\Gunderline(1)\neq\infty.$
\end{itemize}
Toujours d'après le Lemme~\ref{lem:critique-maximale}, il existe des constantes $\alpha,\beta\in\mathbb{C}^*$ telles que
\begin{align*}
-A(1,z)=\Gunderline(0)B(1,z)+\alpha\hspace{0.1mm}z^d=\Gunderline(1)B(1,z)+\beta(z-1)^d
\end{align*}
avec $\Gunderline(0)\neq0,\hspace{0.5mm}\Gunderline(1)\neq1$\hspace{0.5mm} et \hspace{0.5mm}$\Gunderline(0)\neq\Gunderline(1).$ L'homogénéité de $A$ et $B$ entraîne alors que
\begin{align*}
\quad \omega=\left(\Gunderline(0)\hspace{0.2mm}s\hspace{0.2mm}(y-x)^d-g(1)\hspace{0.2mm}r\hspace{0.2mm}y^d\right)\mathrm{d}x+\left(ry^d-s(y-x)^d\right)\mathrm{d}y
\end{align*}
avec $r=\dfrac{\alpha}{\Gunderline(1)-\Gunderline(0)}\neq0$\hspace{1mm} et \hspace{1mm}$s=\dfrac{\beta}{\Gunderline(1)-\Gunderline(0)}\neq0.$ D'après les Théorèmes \ref{thm:holomo-G(I^tr)} et \ref{thm:Divergence}, le $d$-tissu $\Leg\mathcal{H}$ est plat si et seulement si $\mathrm{d}\omega$ s'annule sur les deux droites $y(y-x)=0.$ Un calcul immédiat montre que
\begin{align*}
\mathrm{d}\omega\Big|_{y=0}=-sd(\Gunderline(0)-1)x^{d-1}\mathrm{d}x\wedge\mathrm{d}y
\hspace{1mm}\quad\text{et}\hspace{1mm}\quad
\mathrm{d}\omega\Big|_{y=x}=rd\Gunderline(1)x^{d-1}\mathrm{d}x\wedge\mathrm{d}y.
\end{align*}
Ainsi $\Leg\mathcal{H}$ est plat si et seulement si $\Gunderline(0)=1$ et $\Gunderline(1)=0$, auquel cas $$\omega=s(y-x)^d\mathrm{d}x+\left(ry^d-s(y-x)^d\right)\mathrm{d}y\hspace{1mm};$$ quitte à remplacer $\omega$ par $\varphi^*\omega,$ où $\varphi(x,y)=\left(s^{\larger{\frac{-1}{d+1}}}x-r^{\larger{\frac{-1}{d+1}}}y,-r^{\larger{\frac{-1}{d+1}}}y\right),$ on se ramène à $$\omega=\omega_2^{\hspace{0.2mm}d}=x^d\mathrm{d}x-y^d\mathrm{d}y.$$

\noindent Considérons pour finir l'éventualité (iii). Nous pouvons supposer que la singularité radiale de $\mathcal{H}$ est le point $[0:1:0]$ et que la droite d'inflexion transverse de $\mathcal{H}$ est la droite $(y=0)$; $\Gunderline(0)\neq\Gunderline(\infty)=\infty$ car $\Gunderline^{-1}(\Gunderline(0))=\{0\}.$ Un raisonnement analogue à celui du cas précédent conduit à
\begin{align*}
&\omega=-\left(\Gunderline(0)\beta\hspace{0.3mm}x^d+\alpha y^d\right)\mathrm{d}x+\beta\hspace{0.3mm}x^d\mathrm{d}y,
\qquad\text{avec}\qquad \alpha\beta\Gunderline(0)\neq0.
\end{align*}
\noindent La courbure du tissu associé à cette $1$-forme ne peut pas être holomorphe sur $\mathcal{G}_{\mathcal{H}}(\{y=0\})$ car
$$\mathrm{d}\omega\Big|_{y=0}=d\beta\hspace{0.3mm}x^{d-1}\mathrm{d}x\wedge\mathrm{d}y\not\equiv0\hspace{1mm};$$
il en résulte que $\Leg\mathcal{H}$ ne peut pas être plat lorsque $\mathcal{T}_{\mathcal{H}}=1\cdot\mathrm{R}_{d-1}+1\cdot\mathrm{T}_{d-1}.$
\end{proof}

\begin{pro}\label{pro:omega3-omega4}
{\sl Soit $\nu$ un entier compris entre $1$ et $d-2$. Si le feuilletage $\mathcal{H}$ est de type
$$
\mathcal{T}_{\mathcal{H}}=1\cdot\mathrm{R}_{\nu}+1\cdot\mathrm{R}_{d-\nu-1}+1\cdot\mathrm{R}_{d-1},
\qquad\text{resp}.\hspace{1.5mm}
\mathcal{T}_{\mathcal{H}}=1\cdot\mathrm{R}_{\nu}+1\cdot\mathrm{R}_{d-\nu-1}+1\cdot\mathrm{T}_{d-1},
$$
alors le $d$-tissu $\Leg\mathcal{H}$ est plat si et seulement si $\mathcal{H}$ est linéairement conjugué au feuilletage $\mathcal{H}_{3}^{d,\nu}$, resp. $\mathcal{H}_{4}^{d,\nu}$ donné par
\[
\hspace{-2cm}\omega_{3}^{\hspace{0.2mm}d,\nu}=\sum\limits_{i=\nu+1}^{d}\binom{{d}}{{i}}x^{d-i}y^i\mathrm{d}x-
\sum\limits_{i=0}^{\nu}\binom{{d}}{{i}}x^{d-i}y^i\mathrm{d}y,
\]
\[
\hspace{-0.9cm}\text{resp}.\hspace{1.5mm}
\omega_{4}^{\hspace{0.2mm}d,\nu}=(d-\nu-1)\sum\limits_{i=\nu+1}^{d}\binom{{d}}{{i}}x^{d-i}y^i\mathrm{d}x+
\nu\sum\limits_{i=0}^{\nu}\binom{{d}}{{i}}x^{d-i}y^i\mathrm{d}y.
\]
}
\end{pro}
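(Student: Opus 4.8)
The plan is to read everything off the rational map $\Gunderline(z)=-A(1,z)/B(1,z)$, via the dictionary preceding Remark~\ref{rem:2d-2-R1}: radial singularities of $\mathcal{H}$ are the fixed critical points of $\Gunderline$ and transverse inflection lines are the non-fixed critical points, the multiplicity of the critical point equalling in both cases the radiality, resp. inflection, order. A linear conjugation of $\mathcal{H}$ induces a Möbius conjugation of $\Gunderline$, so I am free to normalise the three critical points. In either type the prescribed multiplicities are $\nu$, $d-\nu-1$ and $d-1$, which sum to $2d-2$; since this is the total ramification of a degree-$d$ endomorphism of $\sph$, the map $\Gunderline$ has no further critical point. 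I would place the two radial singularities $\mathrm{R}_\nu$ and $\mathrm{R}_{d-\nu-1}$ at $z=0$ and $z=\infty$, and the critical point of maximal multiplicity $d-1$, which is $\mathrm{R}_{d-1}$ in the first case and $\mathrm{T}_{d-1}$ in the second, at $z=-1$. The two fixed critical points at $0$ and $\infty$ translate, through Lemma~\ref{lem:critique-maximale} at $z=0$ and the local degree at $z=\infty$, into $z^{\nu+1}\mid A(1,z)$, $\deg A(1,\cdot)=d$ and $\deg B(1,\cdot)=\nu$.

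In the radial case $\mathcal{T}_{\mathcal{H}}=\mathrm{R}_\nu+\mathrm{R}_{d-\nu-1}+\mathrm{R}_{d-1}$ the point $z=-1$ is a \emph{fixed} critical point of multiplicity $d-1$, so $\Gunderline(-1)=-1$ and Lemma~\ref{lem:critique-maximale} gives $B(1,z)-A(1,z)=c\,(z+1)^d$ for some $c\in\mathbb{C}^*$. Splitting this identity into its homogeneous parts of degree $\le\nu$ and $>\nu$ and using $z^{\nu+1}\mid A(1,z)$, one reads off $A(1,z)=-c\sum_{i>\nu}\binom{d}{i}z^i$ and $B(1,z)=c\sum_{i\le\nu}\binom{d}{i}z^i$, which after rescaling is exactly $\mathcal{H}_3^{d,\nu}$. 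Here $\ItrH=0$, so $\mathcal{H}$ is convex and $\Leg\mathcal{H}$ is flat by the corollary following Theorem~\ref{thm:holomo-G(I^tr)}; both sides of the equivalence then hold automatically, and the whole content of this case is the rigidity statement just obtained.

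In the transverse case $\mathcal{T}_{\mathcal{H}}=\mathrm{R}_\nu+\mathrm{R}_{d-\nu-1}+\mathrm{T}_{d-1}$ the point $z=-1$ is instead a \emph{non-fixed} critical point, so Lemma~\ref{lem:critique-maximale} now reads $-A(1,z)=w\,B(1,z)+c\,(z+1)^d$ with $w=\Gunderline(-1)$, and the same splitting yields a one-parameter family $A(1,z)=-c\sum_{i>\nu}\binom{d}{i}z^i$, $B(1,z)=-\tfrac{c}{w}\sum_{i\le\nu}\binom{d}{i}z^i$ indexed by $w$ up to scaling; equivalently $A$ and $B$ are proportional to the two homogeneous blocks $\sum_{i>\nu}\binom{d}{i}x^{d-i}y^i$ and $\sum_{i\le\nu}\binom{d}{i}x^{d-i}y^i$ in the ratio $w$. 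Since now $\ItrH=T^{\,d-1}$ with $T=(x+y=0)$, Theorem~\ref{thm:holomo-G(I^tr)} reduces flatness of $\Leg\mathcal{H}$ to holomorphy of the curvature along $T'=\mathcal{G}_{\mathcal{H}}(T)$, and, $T$ having maximal order $d-1$, Theorem~\ref{thm:Divergence} turns this into the single scalar equation $\mathrm{d}\omega|_T=0$.

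The crux, which I expect to be the only genuine computation, is then to evaluate $\mathrm{d}\omega=(B_x-A_y)\,\mathrm{d}x\wedge\mathrm{d}y$ along $x+y=0$ for this family and to show that the resulting condition on $w=\alpha/\beta$ (writing $A=\alpha\sum_{i>\nu}\binom{d}{i}x^{d-i}y^i$, $B=\beta\sum_{i\le\nu}\binom{d}{i}x^{d-i}y^i$) is exactly $w=(d-\nu-1)/\nu$. Using $\binom{d}{i}(d-i)=d\binom{d-1}{i}$ and $\binom{d}{i}i=d\binom{d-1}{i-1}$, the equation $\mathrm{d}\omega|_{y=-x}=0$ collapses to $\beta\sum_{i=0}^{\nu}(-1)^i\binom{d-1}{i}=\alpha\sum_{j=\nu}^{d-1}(-1)^j\binom{d-1}{j}$, and the partial alternating-sum identity $\sum_{i=0}^{k}(-1)^i\binom{n}{i}=(-1)^k\binom{n-1}{k}$ reduces both sides to $\beta\binom{d-2}{\nu}=\alpha\binom{d-2}{\nu-1}$, i.e. $\alpha/\beta=(d-\nu-1)/\nu$. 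This is precisely the coefficient ratio appearing in $\omega_4^{d,\nu}$, so flatness singles out $\mathcal{H}_4^{d,\nu}$; the only remaining point is the direct verification that $\mathcal{H}_3^{d,\nu}$ and $\mathcal{H}_4^{d,\nu}$ really do realise the two prescribed types, which is an immediate check on the critical points of their respective $\Gunderline$.
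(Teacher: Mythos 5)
Your proposal is correct and follows essentially the same route as the paper's proof: normalising the three critical points of $\Gunderline$ by a linear conjugation, combining Lemme~\ref{lem:critique-maximale} at the point of maximal multiplicity with the divisibility conditions coming from the two fixed critical points to pin down $\omega$ up to the single parameter $\Gunderline(-1)=\alpha/\beta$, invoking convexity for the all-radial case, and using Th\'eor\`emes~\ref{thm:holomo-G(I^tr)} and~\ref{thm:Divergence} to reduce flatness in the transverse case to $\mathrm{d}\omega|_{T}=0$. Your explicit binomial-identity evaluation of $\mathrm{d}\omega|_{y=-x}$ recovers exactly the condition $\Gunderline(-1)=(d-\nu-1)/\nu$ whose computation the paper only states, so the two arguments coincide in substance.
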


\begin{proof}[\sl D\'emonstration]
Dans les deux cas, nous pouvons supposer à conjugaison linéaire près que les points $[0:1],\,[1:0],\,[-1:1]\in\mathbb{P}^{1}_{\mathbb{C}}$ sont critiques de $\Gunderline$, de multiplicité $\nu,$ $d-\nu-1,$ $d-1$ respectivement. Les points $[0:1]$ et $[1:0]$ sont évidemment fixes par $\Gunderline$; le feuilletage $\mathcal{H}$ est de type $\mathcal{T}_{\mathcal{H}}=1\cdot\mathrm{R}_{\nu}+1\cdot\mathrm{R}_{d-\nu-1}+1\cdot\mathrm{R}_{d-1}$ (resp.\hspace{1.5mm}$\mathcal{T}_{\mathcal{H}}=1\cdot\mathrm{R}_{\nu}+1\cdot\mathrm{R}_{d-\nu-1}+1\cdot\mathrm{T}_{d-1}$) si et seulement si le point $[-1:1]$ est fixe (resp. non fixe) par $\Gunderline.$ Puisque $\Gunderline^{-1}(\Gunderline(-1))=\{-1\}$ nous avons $\Gunderline(-1)\neq\Gunderline(\infty)=\infty$. Donc, d'après le Lemme~\ref{lem:critique-maximale}, il existe une constante $\alpha\in\mathbb{C}^*$ et un polynôme homogène $B_{\nu}\in\mathbb{C}[x,y]_\nu$ tels que
\begin{align*}
-A(x,y)=\Gunderline(-1)B(x,y)+\alpha(y+x)^d, \quad B(x,y)=x^{d-\nu}B_{\nu}(x,y)
\quad\text{et}\quad
y^{\nu+1}\hspace{0.5mm}\text{divise}\hspace{1.3mm}A(x,y).
\end{align*}
Il en résulte que
\begin{eqnarray*}
-A(x,y)&=&\Gunderline(-1)x^{d-\nu}B_{\nu}(x,y)+\alpha\sum_{i=0}^{d}\binom{{d}}{{i}}x^{d-\nu}y^{i}
\nonumber\\
&=&\Gunderline(-1)x^{d-\nu}B_{\nu}(x,y)+\alpha\sum_{i=0}^{\nu}\binom{{d}}{{i}}x^{d-\nu}y^{i}+\alpha\sum_{i=\nu+1}^{d}\binom{{d}}{{i}}x^{d-\nu}y^{i}
\nonumber\hspace{1mm};
\end{eqnarray*}
par suite $A(x,y)$ est divisible par $y^{\nu+1}$ si et seulement si
\begin{align*}
-A(x,y)=\alpha\sum_{i=\nu+1}^{d}\binom{{d}}{{i}}x^{d-\nu}y^{i}
\hspace{1mm}\quad\text{et}\hspace{1mm}\quad
\Gunderline(-1)x^{d-\nu}B_{\nu}(x,y)+\alpha\sum_{i=0}^{\nu}\binom{{d}}{{i}}x^{d-\nu}y^{i}=0.
\end{align*}
Quitte à remplacer $\omega=A(x,y)\mathrm{d}x+B(x,y)\mathrm{d}y$ par\hspace{0.2mm} $-\dfrac{1}{\alpha}\omega\hspace{0.5mm}$ on se ramène à
\begin{align*}
& \omega=\sum_{i=\nu+1}^{d}\binom{{d}}{{i}}x^{d-\nu}y^{i}\mathrm{d}x+\frac{1}{\Gunderline(-1)}\sum_{i=0}^{\nu}\binom{{d}}{{i}}x^{d-\nu}y^{i}\mathrm{d}y,
\hspace{1mm}\quad \Gunderline(-1)\neq\Gunderline(0)=0.
\end{align*}
\begin{itemize}
  \item [$\bullet$] Si $\Gunderline(-1)=-1$ nous obtenons le feuilletage $\mathcal{H}_{3}^{d,\nu}$ décrit par
$$\omega_{3}^{\hspace{0.2mm}d,\nu}=\sum\limits_{i=\nu+1}^{d}\binom{{d}}{{i}}x^{d-i}y^i\mathrm{d}x-
\sum\limits_{i=0}^{\nu}\binom{{d}}{{i}}x^{d-i}y^i\mathrm{d}y\hspace{1mm};$$
la transformée de \textsc{Legendre} $\Leg\mathcal{H}_{3}^{d,\nu}$ est plate car $\mathcal{H}_{3}^{d,\nu}$ est convexe.
  \item [$\bullet$] Si $\Gunderline(-1)\neq-1$ alors, d'après les Théorèmes \ref{thm:holomo-G(I^tr)} et \ref{thm:Divergence}, le $d$-tissu $\Leg\mathcal{H}$ est plat si et seulement si
$$\hspace{1.5cm}0\equiv\mathrm{d}\omega\Big|_{y=-x}=
\binom{{d}}{{\nu+1}}\dfrac{(-1)^{\nu+1}(\nu+1)}{(d-1)\Gunderline(-1)}\left[\Gunderline(-1)\nu-d+\nu+1\right]x^{d-1}\mathrm{d}x\wedge\mathrm{d}y,$$
{\it i.e.} si et seulement si $\Gunderline(-1)=\dfrac{d-\nu-1}{\nu},$ auquel cas
$$(d-\nu-1)\omega=\omega_{4}^{\hspace{0.2mm}d,\nu}=(d-\nu-1)\sum\limits_{i=\nu+1}^{d}\binom{{d}}{{i}}x^{d-i}y^i\mathrm{d}x+
\nu\sum\limits_{i=0}^{\nu}\binom{{d}}{{i}}x^{d-i}y^i\mathrm{d}y.$$
\end{itemize}
\end{proof}

\begin{pro}\label{pro:omega5-omega6}
{\sl Si le feuilletage $\mathcal{H}$ est de type
$$
\mathcal{T}_{\mathcal{H}}=1\cdot\mathrm{R}_{d-2}+1\cdot\mathrm{T}_{1}+1\cdot\mathrm{R}_{d-1},
\qquad\text{resp}.\hspace{1.5mm}
\mathcal{T}_{\mathcal{H}}=1\cdot\mathrm{R}_{d-2}+1\cdot\mathrm{T}_{1}+1\cdot\mathrm{T}_{d-1},
$$
alors le $d$-tissu $\Leg\mathcal{H}$ est plat si et seulement si $\mathcal{H}$ est linéairement conjugué au feuilletage $\mathcal{H}_{5}^{d}$, resp. $\mathcal{H}_{6}^{d}$ décrit par
\[
\hspace{-4.2cm}\omega_{5}^{\hspace{0.2mm}d}=2y^d\mathrm{d}x+x^{d-1}(yd-(d-1)x)\mathrm{d}y,
\]
\[
\hspace{1cm}\text{resp}.\hspace{1.5mm}
\omega_{6}^{\hspace{0.2mm}d}=\left((d-1)^2x^d-d(d-1)x^{d-1}y+(d+1)y^d\right)\mathrm{d}x+x^{d-1}\left(yd-(d-1)x\right)\mathrm{d}y.
\]
}
\end{pro}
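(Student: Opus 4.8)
The plan is to handle both types in parallel, following the method of Propositions~\ref{pro:omega1-omega2} and~\ref{pro:omega3-omega4}: normalize $\mathcal{H}$ by a linear conjugation so that the three critical points of $\Gunderline$ sit at prescribed positions, read off the shape of $A$ and $B$ from Lemma~\ref{lem:critique-maximale}, and then impose the flatness criteria of Theorems~\ref{thm:holomo-G(I^tr)}, \ref{thm:Barycentre} and~\ref{thm:Divergence}. In both types the critical points of $\Gunderline$ have multiplicities $d-1$, $d-2$ and $1$; since $\mathrm{PGL}_2$ acts $3$-transitively on the directions $[x:y]\in\mathbb{P}^1_{\mathbb{C}}$, I would place the critical point of multiplicity $d-1$ at $z=0$ (line $y=0$), the radial point $\mathrm{R}_{d-2}$ at $z=\infty$ (line $x=0$), and the simple transverse inflection $\mathrm{T}_1$ at $z=1$ (line $y=x$). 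Applying Lemma~\ref{lem:critique-maximale} at $z=0$ gives $A=\lambda y^d$ with $\lambda\in\mathbb{C}^*$ in the first type (where $z=0$ is fixed) and $A=-\Gunderline(0)\,B-c\,y^d$ with $c\in\mathbb{C}^*$ in the second (where $z=0$ is not fixed). The requirements that $z=\infty$ be critical fixed of multiplicity $d-2$ and that $z=1$ be the remaining simple critical point then force $B_x$ to be proportional to $x^{d-2}(y-x)$, hence $B=x^{d-1}\bigl(dy-(d-1)x\bigr)$ up to a scalar which I absorb by rescaling $\omega$. After normalization there remains one free parameter ($\lambda$) in the first type and two ($\Gunderline(0)$ and $c$) in the second.

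For the first type, $\ItrH$ reduces to the single simple line $\mathrm{T}_1=\{y=x\}$, so by Theorem~\ref{thm:holomo-G(I^tr)} the web $\Leg\mathcal{H}$ is flat if and only if $K(\Leg\mathcal{H})$ is holomorphic along $\G_{\mathcal{H}}(\mathrm{T}_1)$, which I test with the barycenter criterion of Theorem~\ref{thm:Barycentre}. The crucial observation is that, since $B(1,1)=1$ and $\mu:=A(1,1)=\lambda$, the auxiliary polynomial of that theorem is $P=(A-\mu B)/(y-x)^2$, and $A-\mu B=\lambda\bigl((d-1)x^d-dx^{d-1}y+y^d\bigr)=\lambda\,(y-x)^2M$, where $M=\sum_{j=0}^{d-2}\binom{d}{j+2}x^{d-2-j}(y-x)^j$. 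Hence $P=\lambda M$ and the barycenter condition collapses to $\partial_x M-\mu\,\partial_y M=0$ on $\{y=x\}$; evaluating the two partials at $y=x$ yields $2\binom{d}{3}$ and $\binom{d}{3}$, so the condition is $\mu=2$, i.e.\ $\lambda=2$, giving $\omega=\omega_5^{\hspace{0.2mm}d}$.

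For the second type, $\ItrH=\mathrm{T}_1\cup\mathrm{T}_{d-1}$ with $\mathrm{T}_{d-1}=\{y=0\}$ of maximal order, so Theorem~\ref{thm:holomo-G(I^tr)} reduces flatness to holomorphy along both $\G_{\mathcal{H}}(\mathrm{T}_1)$ and $\G_{\mathcal{H}}(\mathrm{T}_{d-1})$. The divergence criterion (Theorem~\ref{thm:Divergence}) applied to $\mathrm{T}_{d-1}$ requires $\mathrm{d}\omega$ to vanish on $\{y=0\}$; computing $(B_x-A_y)|_{y=0}$ gives $\Gunderline(0)=d-1$. The barycenter criterion applied to $\mathrm{T}_1=\{y=x\}$ proceeds exactly as before, since the same factorization $A-\mu B=-c\,(y-x)^2M$ occurs (now with $\mu=A(1,1)=-\Gunderline(0)-c$), and again forces $\mu=2$, i.e.\ $\Gunderline(0)+c=-2$. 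Combined with $\Gunderline(0)=d-1$ this yields $c=-(d+1)$, hence $\omega=\omega_6^{\hspace{0.2mm}d}$.

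The step I expect to be the main obstacle is the barycenter computation of Theorem~\ref{thm:Barycentre} for the simple transverse inflection line in arbitrary degree: a priori the condition $Q(b,-a;a,b)=0$ is an unwieldy polynomial identity, the closed form of Remark~\ref{rem:Q(b,-a,a,b)} being available only for $d=3$. Everything hinges on recognizing the universal factorization $(d-1)x^d-dx^{d-1}y+y^d=(y-x)^2M$, which trivializes $P$ and reduces the criterion to the single scalar equation $\mu=2$; by contrast the divergence condition and the reduction through Theorem~\ref{thm:holomo-G(I^tr)} are routine. One should also verify the standing hypothesis of Theorem~\ref{thm:Barycentre} that $z=1$ be the only critical point in its $\Gunderline$-fiber, which holds because the three critical values are pairwise distinct once the parameters take the values above.
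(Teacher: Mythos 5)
Your proposal is correct and is essentially the paper's own proof: the same normalization of the three critical points of $\Gunderline$ at $0$, $1$, $\infty$ via Lemma~\ref{lem:critique-maximale} (with the multiplicity-$(d-1)$ point at $0$, the $\mathrm{R}_{d-2}$ at $\infty$, the $\mathrm{T}_1$ at $1$, forcing $B\propto x^{d-1}(dy-(d-1)x)$), the same reduction through Theorems~\ref{thm:holomo-G(I^tr)}, \ref{thm:Barycentre} and~\ref{thm:Divergence}, and the same two scalar conditions — your $\mu=2$, i.e. $\Gunderline(0)+c=-2$, is exactly the paper's $s=-\Gunderline(0)-2$, and $\Gunderline(0)=d-1$ appears identically. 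Your explicit factorization $(d-1)x^d-dx^{d-1}y+y^d=(y-x)^2M$ with $M=\sum_{j=0}^{d-2}\binom{d}{j+2}x^{d-2-j}(y-x)^j$ correctly reproduces, for arbitrary $d$, the value the paper simply states, $Q(1,1;-1,1)=-\tfrac{1}{6}\,s\,d(d-1)(d-2)\bigl(\Gunderline(0)+s+2\bigr)$, so the step you flagged as the main obstacle is handled exactly as in the paper.
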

\begin{proof}[\sl D\'emonstration]
Nous allons traiter ces deux types simultanément. \`{A} isomorphisme linéaire près, nous pouvons nous ramener à la situation suivante: les points $[1:0],\,[1:1],\,[0:1]\in\mathbb{P}^{1}_{\mathbb{C}}$ sont critiques de $\Gunderline$, de multiplicité $d-2,$ $1,$ $d-1$ respectivement. Le point $[1:0]$ (resp.  $[1:1]$) est fixe (resp. non fixe) par $\Gunderline$; le feuilletage $\mathcal{H}$ est de type $\mathcal{T}_{\mathcal{H}}=1\cdot\mathrm{R}_{d-2}+1\cdot\mathrm{T}_{1}+1\cdot\mathrm{R}_{d-1}$ (resp.\hspace{1.5mm}$\mathcal{T}_{\mathcal{H}}=1\cdot\mathrm{R}_{d-2}+1\cdot\mathrm{T}_{1}+1\cdot\mathrm{T}_{d-1}$) si et seulement si le point $[0:1]$ est fixe (resp. non fixe) par $\Gunderline.$ Puisque $\Gunderline^{-1}(\Gunderline(0))=\{0\}$ nous avons $\Gunderline(0)\neq\Gunderline(1)$ et $\Gunderline(0)\neq\Gunderline(\infty)=\infty$; de plus $\Gunderline(1)\neq\Gunderline(\infty)=\infty$ car $\Gunderline^{-1}(\Gunderline(\infty))=\{\infty,z_{0}\}$ pour un certain point $z_{0}\neq\infty,$ non critique de $\Gunderline.$ Par suite, d'après le Lemme~\ref{lem:critique-maximale}, il existe une constante $\alpha\in\mathbb{C}^*$ telle que
\begin{align*}
-A(x,y)=\Gunderline(0)B(x,y)+\alpha y^d
\qquad\text{et}\qquad
B(x,y)=\frac{\raisebox{-0.4mm}{$\alpha$}}{\raisebox{0.4mm}{$s$}}x^{d-1}\left(yd-(d-1)x\right),
\end{align*}
avec $s=\Gunderline(1)-\Gunderline(0)\neq0.$
Quitte à multiplier $\omega=A(x,y)\mathrm{d}x+B(x,y)\mathrm{d}y$ par $\dfrac{\raisebox{-0.4mm}{$s$}}{\raisebox{0.4mm}{$\alpha$}}$ on se ramène à
\begin{align*}
& \omega=-\left(\Gunderline(0)\,x^{d-1}\left(yd-(d-1)x\right)+sy^d\right)\mathrm{d}x+x^{d-1}\left(yd-(d-1)x\right)\mathrm{d}y.
\end{align*}
D'après ce qui précède le point $[1:1]$ est le seul point critique de $\Gunderline$ dans sa fibre $\Gunderline^{-1}(\Gunderline(1)).$ Donc, d'après le Théorème~\ref{thm:Barycentre}, la courbure de $\Leg\mathcal{H}$ est holomorphe sur $\mathcal{G}_{\mathcal{H}}(\{y=x\})$ si et seulement si $$\hspace{2cm}0=Q(1,1;-1,1)=-\frac{1}{6}sd(d-1)(d-2)(\Gunderline(0)+s+2),$$ {\it i.e.} si et seulement si $s=-\Gunderline(0)-2.$
\begin{itemize}
  \item [$\bullet$] Si $\Gunderline(0)=0$ alors la condition $s=-\Gunderline(0)-2=-2$ est suffisante pour que $\Leg\mathcal{H}$ soit plat, en vertu du Théorème~\ref{thm:holomo-G(I^tr)}, auquel cas $$\omega=\omega_{5}^{\hspace{0.2mm}d}=2y^d\mathrm{d}x+x^{d-1}(yd-(d-1)x)\mathrm{d}y.$$

  \item [$\bullet$] Si $\Gunderline(0)\neq0$ alors, d'après les Théorèmes \ref{thm:holomo-G(I^tr)} et \ref{thm:Divergence}, $\Leg\mathcal{H}$ est plat si et seulement si
      $$
      \hspace{1cm}
      s=-\Gunderline(0)-2
      \quad\hspace{1cm} \text{et} \quad\hspace{1cm}
      0\equiv\mathrm{d}\omega\Big|_{y=0}=d(\Gunderline(0)-d+1)x^{d-1}\mathrm{d}x\wedge\mathrm{d}y,
      $$
{\it i.e.} si et seulement si $\Gunderline(0)=d-1$\, et \,$s=-d-1$, auquel cas
$$\hspace{1cm}\omega=\omega_{6}^{\hspace{0.2mm}d}=\left((d-1)^2x^d-d(d-1)x^{d-1}y+(d+1)y^d\right)\mathrm{d}x+x^{d-1}\left(yd-(d-1)x\right)\mathrm{d}y.$$
\end{itemize}
\end{proof}

\section{Classification des feuilletages homogènes de degré trois à transformée de \textsc{Legendre} plate}
\bigskip

\noindent Dans ce paragraphe nous allons classifier, à automorphisme de $\pp$ près, les feuilletages homogènes de degré $3$ sur le plan projectif dont le $3$-tissu dual est plat. Plus précisément nous allons démontrer le théorème suivant.

\begin{thm}\label{thm:Class-Homog3-Plat}
{\sl Soit $\mathcal{H}$ un feuilletage homogène de degré $3$ sur le plan projectif $\hspace{0.1mm}\pp$. Alors le $3$-tissu dual $\mathrm{Leg}\mathcal{H}$ de $\mathcal{H}$ est plat si et seulement si $\mathcal{H}$ est linéairement conjugué à l'un des onze feuilletages $\mathcal{H}_{1},\ldots,\mathcal{H}_{11}$ décrits respectivement en carte affine par les $1$-formes
\begin{itemize}
\item [\texttt{1. }] \hspace{1mm}$\omega_1\hspace{1mm}=y^3\mathrm{d}x-x^3\mathrm{d}y$;
\smallskip
\item [\texttt{2. }] \hspace{1mm}$\omega_2\hspace{1mm}=x^3\mathrm{d}x-y^3\mathrm{d}y$;
\smallskip
\item [\texttt{3. }] \hspace{1mm}$\omega_3\hspace{1mm}=y^2(3x+y)\mathrm{d}x-x^2(x+3y)\mathrm{d}y$;
\smallskip
\item [\texttt{4. }] \hspace{1mm}$\omega_4\hspace{1mm}=y^2(3x+y)\mathrm{d}x+x^2(x+3y)\mathrm{d}y$;
\smallskip
\item [\texttt{5. }] \hspace{1mm}$\omega_5\hspace{1mm}=2y^3\mathrm{d}x+x^2(3y-2x)\mathrm{d}y$;
\smallskip
\item [\texttt{6. }] \hspace{1mm}$\omega_6\hspace{1mm}=(4x^3-6x^2y+4y^3)\mathrm{d}x+x^2(3y-2x)\mathrm{d}y$;
\smallskip
\item [\texttt{7. }] \hspace{1mm}$\omega_7\hspace{1mm}=y^3\mathrm{d}x+x(3y^2-x^2)\mathrm{d}y$;
\smallskip
\item [\texttt{8. }] \hspace{1mm}$\omega_8\hspace{1mm}=x(x^2-3y^2)\mathrm{d}x-4y^3\mathrm{d}y$;
\smallskip
\item [\texttt{9. }] \hspace{1mm}$\omega_9\hspace{1mm}=y^{2}\left((-3+\mathrm{i}\sqrt{3})x+2y\right)\mathrm{d}x+
                                                       x^{2}\left((1+\mathrm{i}\sqrt{3})x-2\mathrm{i}\sqrt{3}y\right)\mathrm{d}y$;
\smallskip
\item [\texttt{10. }] \hspace{-1mm}$\omega_{10}=(3x+\sqrt{3}y)y^2\mathrm{d}x+(3y-\sqrt{3}x)x^2\mathrm{d}y$;
\smallskip
\item [\texttt{11. }]  \hspace{-1mm}$\omega_{11}=(3x^3+3\sqrt{3}x^2y+3xy^2+\sqrt{3}y^3)\mathrm{d}x+(\sqrt{3}x^3+3x^2y+3\sqrt{3}xy^2+3y^3)\mathrm{d}y$.
\end{itemize}
}
\end{thm}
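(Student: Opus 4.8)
The plan is to organize the classification by the type $\mathcal{T}_{\mathcal{H}}\in\Z[\mathrm{R}_1,\mathrm{R}_2,\mathrm{T}_1,\mathrm{T}_2]$, which is a linear-conjugacy invariant. For $d=3$ the divisor $\Dtr$ has degree $2d-2=4$, so writing $\mathcal{T}_{\mathcal{H}}=r_1\mathrm{R}_1+r_2\mathrm{R}_2+t_1\mathrm{T}_1+t_2\mathrm{T}_2$ the only constraints are $r_1+2r_2+t_1+2t_2=4$ and $\deg\mathcal{T}_{\mathcal{H}}=r_1+r_2+t_1+t_2\geq2$; enumerating the nonnegative solutions gives exactly fourteen possible types (three of degree $2$, six of degree $3$, five of degree $4$). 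First I would eliminate the type $4\mathrm{R}_1$: by Remark~\ref{rem:2d-2-R1} it does not occur, since its four distinct radial critical points would be four critical fixed points of the degree-$3$ self-map $\Gunderline$ of $\sph$, forcing all four fixed points of $\Gunderline$ to be critical and contradicting the existence of a non-critical fixed point.

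Next I would read off $\mathcal{H}_1,\dots,\mathcal{H}_6$ from Propositions~\ref{pro:omega1-omega2}, \ref{pro:omega3-omega4} and \ref{pro:omega5-omega6}, taken with $d=3$ (and $\nu=1$ in the second). Proposition~\ref{pro:omega1-omega2} treats $\deg\mathcal{T}_{\mathcal{H}}=2$ and yields $\mathcal{H}_1$ (type $2\mathrm{R}_2$) and $\mathcal{H}_2$ (type $2\mathrm{T}_2$), while showing that $\mathrm{R}_2+\mathrm{T}_2$ carries no flat foliation; Proposition~\ref{pro:omega3-omega4} yields $\mathcal{H}_3$ (type $2\mathrm{R}_1+\mathrm{R}_2$) and $\mathcal{H}_4$ (type $2\mathrm{R}_1+\mathrm{T}_2$); Proposition~\ref{pro:omega5-omega6} yields $\mathcal{H}_5$ (type $\mathrm{R}_1+\mathrm{T}_1+\mathrm{R}_2$) and $\mathcal{H}_6$ (type $\mathrm{R}_1+\mathrm{T}_1+\mathrm{T}_2$). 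Here I would only check that the normal forms $\omega_1^{3},\omega_2^{3},\omega_3^{3,1},\omega_4^{3,1},\omega_5^{3},\omega_6^{3}$ produced there coincide, after the indicated linear normalizations, with the forms $\omega_1,\dots,\omega_6$ of the statement.

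The heart of the proof is the six remaining types: $\mathrm{R}_2+2\mathrm{T}_1$ and $\mathrm{T}_2+2\mathrm{T}_1$ (degree $3$), and $3\mathrm{R}_1+\mathrm{T}_1$, $2\mathrm{R}_1+2\mathrm{T}_1$, $\mathrm{R}_1+3\mathrm{T}_1$, $4\mathrm{T}_1$ (degree $4$). For each I would mimic the proofs of the three propositions: using that linear conjugacy of $\mathcal{H}$ acts on $\Gunderline$ by M\"obius conjugation in $\mathrm{PGL}_2(\C)$, I place the critical points of $\Gunderline$ at prescribed points of $\sph$ — all three in the degree-$3$ types, and three of the four in the degree-$4$ types, the fourth surviving as a cross-ratio modulus $\lambda$ — and then apply Lemma~\ref{lem:critique-maximale} at each critical point to put $A$ and $B$ into a normal form depending on $\lambda$ and on the critical values of the non-fixed critical points. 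Imposing flatness via Corollary~\ref{cor:platitude-degre-3} (condition $(1)$ at each simple transverse inflection line, condition $(2)$ at each double one) then becomes a system of polynomial equations in these finitely many parameters. Solving it would produce $\mathcal{H}_7$ (type $\mathrm{R}_2+2\mathrm{T}_1$), $\mathcal{H}_8$ (type $\mathrm{T}_2+2\mathrm{T}_1$), the two foliations $\mathcal{H}_9,\mathcal{H}_{10}$ of type $2\mathrm{R}_1+2\mathrm{T}_1$ and $\mathcal{H}_{11}$ of type $4\mathrm{T}_1$ (the algebraic numbers $\sqrt3,\mathrm{i}\sqrt3$ arising as roots of these equations), while showing that the systems attached to $3\mathrm{R}_1+\mathrm{T}_1$ and $\mathrm{R}_1+3\mathrm{T}_1$ have no solution. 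Because for every type this determines the flat foliations up to linear conjugacy, running the analysis type by type proves both implications of the equivalence at once.

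The main difficulty I expect lies in the degree-$4$ types, where the modulus $\lambda$ is not removed by the normalization: one must show that condition $(1)$ cuts the resulting one-parameter family down to the finitely many flat members, and carry out the (possibly delicate) elimination establishing non-existence of flat elements for $3\mathrm{R}_1+\mathrm{T}_1$ and $\mathrm{R}_1+3\mathrm{T}_1$. Pairwise non-conjugacy of the eleven foliations is then nearly automatic, since the type is a conjugacy invariant and already separates all of them except the pair $\mathcal{H}_9,\mathcal{H}_{10}$, which share the type $2\mathrm{R}_1+2\mathrm{T}_1$; these two I would distinguish by a finer projective invariant, for instance the cross-ratio of the four critical points of $\Gunderline$ or the multipliers of $\Gunderline$ at its non-critical fixed points.
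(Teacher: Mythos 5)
Your skeleton coincides with the paper's: enumerate the fourteen possible types, discard $4\cdot\mathrm{R}_1$ via Remark~\ref{rem:2d-2-R1}, obtain $\mathcal{H}_1,\ldots,\mathcal{H}_6$ from Propositions~\ref{pro:omega1-omega2}, \ref{pro:omega3-omega4} and \ref{pro:omega5-omega6} with $d=3$, $\nu=1$, and treat the remaining six types by putting $\omega$ into a normal form and imposing the two conditions of Corollaire~\ref{cor:platitude-degre-3}. (The paper builds those normal forms by matching coefficients in $\Dtr$ --- Lemmes~\ref{lem:2T1+1TR2} and \ref{lem:deg-type=4} --- rather than by your repeated application of Lemme~\ref{lem:critique-maximale} at every critical point, but that is an inessential variant.)

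The genuine problem is that your announced outcomes for the degree-$4$ types are false, and since you never execute the computations, these announcements are all your proof consists of at that point. Type $3\cdot\mathrm{R}_1+1\cdot\mathrm{T}_1$ is \emph{not} empty: with the paper's normal form for that type, flatness reduces (after removing the nondegeneracy factors) to $r^2+3r+3=0$, and both roots give foliations linearly conjugate to $\mathcal{H}_9$. One can check directly from $\omega_9$ that this is where $\mathcal{H}_9$ lives: one finds $\Dtr=xy\,Q(x,y)$ and $\Cinv=xy\,C(x,y)$ where the quadratic forms $Q$ and $C$ have exactly one common root, the one corresponding to the line $x=y$; hence three of the four critical lines of $\Gunderline$ (namely $x=0$, $y=0$, $x=y$) are invariant and one is transverse, i.e. $\mathcal{T}_{\mathcal{H}_9}=3\cdot\mathrm{R}_1+1\cdot\mathrm{T}_1$. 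Consequently your picture of type $2\cdot\mathrm{R}_1+2\cdot\mathrm{T}_1$ containing both $\mathcal{H}_9$ and $\mathcal{H}_{10}$ is also wrong: that type contains only $\mathcal{H}_{10}$ (its two parameter solutions $r=\pm\sqrt3$, $s=r-2$ are conjugate to each other via $(x,y)\mapsto(x,-y)$), the eleven foliations have pairwise distinct types, and the finer invariant you reserve for separating $\mathcal{H}_9$ from $\mathcal{H}_{10}$ is unnecessary. The degree-$4$ types carrying no flat foliation are $4\cdot\mathrm{R}_1$ and $1\cdot\mathrm{R}_1+3\cdot\mathrm{T}_1$ only; the elimination you claim for $3\cdot\mathrm{R}_1+1\cdot\mathrm{T}_1$ would fail the moment the polynomial system is actually solved, so as written the proposal both asserts a false statement and, once corrected, leaves one of the eleven foliations unaccounted for in your type-by-type bookkeeping.
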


\noindent Considérons un feuilletage homogène $\mathcal{H}$ de degré $3$ sur $\pp$ défini, en carte affine $(x,y)$, par
$$\omega=A(x,y)\mathrm{d}x+B(x,y)\mathrm{d}y,$$
où $A$ et $B$ désignent des polynômes homogènes de degré $3$ sans composante commune; la classification menant au Théorème~\ref{thm:Class-Homog3-Plat} est établie au cas par cas suivant que $\deg\mathcal{T}_{\mathcal{H}}=2,3$ ou $4$, {\it i.e.} suivant la nature du support du diviseur $\Dtr$ qui peut être deux droites, trois droites ou quatre droites. Pour ce faire commençons par établir les deux lemmes suivants.

\begin{lem}\label{lem:2T1+1TR2}
{\sl Si $\mathcal{T}_{\mathcal{H}}=2\cdot\mathrm{T}_{1}+1\cdot\mathrm{R}_{2}$, resp. $\mathcal{T}_{\mathcal{H}}=2\cdot\mathrm{T}_{1}+1\cdot\mathrm{T}_{2}$, alors, à conjugaison linéaire près, la $1$-forme $\omega$ décrivant $\mathcal{H}$ est du type
\[
\hspace{-3.6cm}\omega=y^3\mathrm{d}x+\left(\beta\,x^3-3\beta\,xy^2+\alpha\,y^3\right)\mathrm{d}y,\qquad \beta\left((2\beta-1)^2-\alpha^2\right)\neq0,
\]
\[
\text{resp}.\hspace{1.5mm}\omega=\left(x^3-3xy^2+\alpha\,y^3\right)\mathrm{d}x+\left(\delta\,x^3-3\delta\,xy^2+\beta\,y^3\right)\mathrm{d}y,\qquad
(\beta-\alpha\delta)\left((\beta-2)^2-(\alpha-2\delta)^2\right)\neq0.
\]
}
\end{lem}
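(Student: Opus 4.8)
The plan is to read off the type $\mathcal{T}_{\mathcal{H}}$ from the critical structure of the rational map $\Gunderline(z)=-A(1,z)/B(1,z)$, using the dictionary of Section~2: a \emph{fixed} critical point of multiplicity $m$ is a radial singularity of order $m$ (an $\mathrm{R}_m$), and a \emph{non-fixed} critical point of multiplicity $m$ is a transverse inflection line of order $m$ (a $\mathrm{T}_m$). For $d=3$ one has $\deg\Dtr=2d-2=4$, so $\Gunderline$ has four critical points counted with multiplicity; both types $2\cdot\mathrm{T}_1+\mathrm{R}_2$ and $2\cdot\mathrm{T}_1+\mathrm{T}_2$ thus correspond to one critical point of multiplicity $2$ together with two simple ones, differing only in whether the double point is fixed.

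First I would exploit the linear conjugation freedom. A linear change of the affine coordinates $(x,y)$ induces a Möbius transformation $\psi$ under which $\Gunderline$ transforms equivariantly, so that the critical and fixed points, with their multiplicities and their radial/transverse nature, are simply permuted by $\psi$. Since $\mathrm{PGL}_2(\mathbb{C})$ is sharply $3$-transitive, I may send the double critical point to $z=0$ and the two simple ones to $z=1$ and $z=-1$; this is exactly the configuration in which $\Dtr(1,z)$ is proportional to $z^2(z^2-1)$, i.e. $\Dtr\propto y^2(x-y)(x+y)$.

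The double critical point has the maximal multiplicity $d-1=2$, so I would apply Lemma~\ref{lem:critique-maximale}. In the first case ($\mathrm{R}_2$) the point $z=0$ is fixed, hence $\Gunderline(0)=0$ and the lemma forces $-A(1,z)=c\,z^3$ with $c\neq0$; thus $A=-c\,y^3$ and, after rescaling $\omega$, $A=y^3$. Then $A_x=0$ gives $\Dtr=-A_yB_x=-3y^2B_x$, so demanding that $z=\pm1$ be critical amounts to $B_x(1,z)\propto z^2-1$, i.e. the $x^2y$-coefficient of $B$ vanishes and its $xy^2$-coefficient is $-3$ times its $x^3$-coefficient; writing $\beta,\alpha$ for the $x^3$- and $y^3$-coefficients yields $\omega=y^3\mathrm{d}x+(\beta x^3-3\beta xy^2+\alpha y^3)\mathrm{d}y$. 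In the second case ($\mathrm{T}_2$) the point $z=0$ is non-fixed, so $\gamma:=\Gunderline(0)\neq0$; when moreover $\gamma\neq\infty$ the lemma gives $-A=\gamma B+c\,y^3$, whence $A_x=-\gamma B_x$ and $A_y=-\gamma B_y-3cy^2$, and a one-line cancellation collapses $\Dtr=A_xB_y-A_yB_x$ to $\Dtr=3cy^2B_x$. Imposing again that $z=\pm1$ be critical forces $B_x\propto x^2-y^2$, and after normalizing the leading coefficient of $A$ to $1$ one reaches $A=x^3-3xy^2+\alpha y^3$, $B=\delta(x^3-3xy^2)+\beta y^3$. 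The excluded value $\gamma=\infty$ (which is $B(1,0)=0$) I would treat by applying Lemma~\ref{lem:critique-maximale} to $-B/A=1/\Gunderline$, whose critical points coincide with those of $\Gunderline$ and whose value at $z=0$ is now finite: this forces $B=c'y^3$, after which the same computation (with the roles of $A$ and $B$ exchanged, $\Dtr=3c'y^2A_x$) produces the member $\delta=0$ of the family.

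Finally I would identify the open conditions. In both cases $\Dtr=\text{const}\cdot y^2(x-y)(x+y)$, and the constant is $-9\beta$ (resp. $9(\beta-\alpha\delta)$); its non-vanishing is $\beta\neq0$ (resp. $\beta-\alpha\delta\neq0$), which simultaneously guarantees $\pgcd(A,B)=1$ and that the double point has multiplicity exactly $2$. The remaining inequality says that the two simple critical points $z=\pm1$ are non-fixed, hence genuinely transverse and not radial: evaluating the tangent cone $\Cinv(1,z)=A(1,z)+zB(1,z)$ shows that $z=\pm1$ is fixed exactly when $\alpha=\pm(2\beta-1)$ in the first case and when $(\beta-2)=\mp(\alpha-2\delta)$ in the second, so excluding both is precisely $(2\beta-1)^2\neq\alpha^2$, respectively $(\beta-2)^2\neq(\alpha-2\delta)^2$. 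I expect the main obstacle to be not any single computation but the bookkeeping needed to check that these Zariski-open conditions carve out exactly the prescribed type and nothing coarser; the genuine gap is the case $\gamma=\infty$ in the second situation, which the naive use of Lemma~\ref{lem:critique-maximale} misses and which must be closed by the reciprocal-map argument above.
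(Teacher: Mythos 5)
Your proof is correct, but it takes a genuinely different route from the paper's own proof of Lemme~\ref{lem:2T1+1TR2}. After the same normalization $\Dtr\propto y^{2}(y-x)(y+x)$, the paper proceeds by brute force: it writes $A,B$ with indeterminate coefficients $a_i,b_i$, expands $\Dtr$ and $\Cinv$ explicitly, solves the resulting system of five equations, and separates the two types by $\Cinv(1,0)=a_0=0$ (type $\mathrm{R}_2$) versus $a_0\neq0$ (type $\mathrm{T}_2$); the inequalities then come from $c\,\Cinv(1,1)\,\Cinv(1,-1)\neq0$, exactly as in your final step. You instead work on the rational map $\Gunderline$ and apply Lemme~\ref{lem:critique-maximale} at the double critical point --- the method the paper reserves for Propositions~\ref{pro:omega1-omega2}, \ref{pro:omega3-omega4} and \ref{pro:omega5-omega6}, but does not use for this lemma. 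Your route shortens the computation (the cancellation $\Dtr=3c\,y^{2}B_x$ replaces the coefficient system) and stays closer to the geometric dictionary; its cost is precisely the sub-case $\Gunderline(0)=\infty$ in the $\mathrm{T}_2$ situation, which the paper's uniform computation absorbs silently (it is the solution with $b_0=\delta=0$, realized e.g.\ by $\omega_8$ in Th\'eor\`eme~\ref{thm:Class-Homog3-Plat}) and which a naive use of Lemme~\ref{lem:critique-maximale} would miss, since that lemma requires $f(z_0)\neq\infty$; you correctly identified this pitfall and closed it with the reciprocal-map argument. Your side computations also check out: $\Dtr=-9\beta\,y^{2}(x-y)(x+y)$, resp.\ $\Dtr=9(\beta-\alpha\delta)\,y^{2}(x-y)(x+y)$, and $\Cinv(1,1)\,\Cinv(1,-1)=\alpha^{2}-(2\beta-1)^{2}$, resp.\ $(\beta-2)^{2}-(\alpha-2\delta)^{2}$, and $\pgcd(A,B)=1$ is indeed equivalent to $\beta\neq0$, resp.\ $\beta-\alpha\delta\neq0$, so the open conditions you obtain are precisely those of the statement.
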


\begin{proof}[\sl D\'emonstration] \`{A} isomorphisme près nous pouvons nous ramener à $\Dtr=cy^{2}(y-x)(y+x)$ pour un certain $c\in \mathbb{C}^*$. Le produit $\Cinv(1,1)\Cinv(1,-1)$ est évidemment non nul; $\mathcal{H}$ est de type $\mathcal{T}_{\mathcal{H}}=2\cdot\mathrm{T}_{1}+1\cdot\mathrm{R}_{2}$ (resp. $\mathcal{T}_{\mathcal{H}}=2\cdot\mathrm{T}_{1}+1\cdot\mathrm{T}_{2}$) si et seulement si $\Cinv(1,0)=0$ (resp. $\Cinv(1,0)\neq0$).
Écrivons les coefficients $A$ et $B$ de $\omega$ sous la forme
$$A(x,y)=a_{0}x^3+a_{1}x^2y+a_{2}xy^2+a_{3}y^3\qquad \text{et} \qquad B(x,y)=b_{0}x^3+b_{1}x^2y+b_{2}xy^2+b_{3}y^3\hspace{1mm};$$
nous avons donc
\begin{align*}
\hspace{-1.3cm}\Cinv=a_0x^4+(a_1+b_0)x^3y+(a_2+b_1)x^2y^2+(a_3+b_2)xy^3+b_3y^4
\end{align*}
et
\begin{align*}
&\hspace{1.2cm}\Dtr=(a_0b_1-a_1b_0)x^4+2(a_0b_2-a_2b_0)x^3y+(3a_0b_3+a_1b_2-a_2b_1-3a_3b_0)x^2y^2\\
&\hspace{2.1cm}+2(a_1b_3-a_3b_1)xy^3+(a_2b_3-a_3b_2)y^4.
\end{align*}
Ainsi $\Cinv(1,0)=a_0$\, et

\begin{equation}\label{equa:2T1+1TR2}
\Dtr=cy^{2}(y-x)(y+x)\hspace{4mm}\Leftrightarrow\hspace{4mm}\left\{\begin{array}{lllll}
a_0\hspace{0.2mm}b_1=a_1b_0\\
a_0\hspace{0.2mm}b_2=a_2b_0\\
a_1b_3=a_3b_1\\
a_2b_3-a_3b_2=c\\
3a_0b_3+a_1b_2-a_2b_1-3a_3b_0=-c
\end{array}\right.
\end{equation}

\begin{itemize}
  \item [$\bullet$] Si $a_0\neq0$ alors le système (\ref{equa:2T1+1TR2}) est équivalent à
$$a_1=0,\qquad a_2=-3a_0,\qquad b_1=0,\qquad b_2=-3b_0,\qquad c=-3(a_0b_3-a_3b_0).$$
Posons $a_3=a_0\alpha,\quad b_0=a_0\delta,\quad b_3=a_0\beta$; alors, quitte à diviser $\omega$ par $a_0$, cette forme s'écrit
$$\omega=\left(x^3-3xy^2+\alpha\,y^3\right)\mathrm{d}x+\left(\delta\,x^3-3\delta\,xy^2+\beta\,y^3\right)\mathrm{d}y\hspace{1mm};$$
un calcul direct montre que la condition $c\,\Cinv(1,1)\Cinv(1,-1)\neq0$ est vérifiée si et seulement si $(\beta-\alpha\delta)\left((\beta-2)^2-(\alpha-2\delta)^2\right)\neq0.$
  \item [$\bullet$] Si $a_0=0$ alors le système (\ref{equa:2T1+1TR2}) conduit à
  $$a_1=a_2=b_1=0,\quad b_2=-3b_0,\quad c=3a_3b_0\neq0.$$ Écrivons $b_0=a_3\beta$\, et \,$b_3=a_3\alpha$; alors, quitte à remplacer $\omega$ par $\dfrac{1}{a_3}\omega,$ on se ramène à $$\omega=y^3\mathrm{d}x+\left(\beta\,x^3-3\beta\,xy^2+\alpha\,y^3\right)\mathrm{d}y,$$ et la non nullité du produit $c\,\Cinv(1,1)\Cinv(1,-1)$ est équivalente à $\beta\left((2\beta-1)^2-\alpha^2\right)\neq0.$
\end{itemize}
\end{proof}

\begin{lem}\label{lem:deg-type=4}
{\sl Si le diviseur $\Dtr$ est réduit, {\it i.e.} si $\deg\mathcal{T}_\mathcal{H}=4$, alors $\omega$ est, à conjugaison linéaire près, de l'une des formes suivantes
\begin{itemize}
\item [\texttt{1.}] $y^2\left((2r+3)x-(r+2)y\right)\mathrm{d}x-x^2(x+ry)\mathrm{d}y$,\\
                    \hspace{1cm} \text{où}\hspace{1mm}
                    \begin{small}$r(r+1)(r+2)(r+3)(2r+3)\neq0$\end{small};
                    \smallskip
\item [\texttt{2.}] $s\hspace{0.12mm}y^2\left((2r+3)x-(r+2)y\right)\mathrm{d}x-x^2(x+ry)\mathrm{d}y$,\\
                    \hspace{1cm} \text{où}\hspace{1mm}
                    \begin{small}$rs(s-1)(r+1)(r+2)(r+3)(2r+3)\left(s(2r+3)^2-r^2\right)\neq0$\end{small};
                    \smallskip
\item [\texttt{3.}] $t\hspace{0.12mm}y^2\left((2r+3)x-(r+2)y\right)\mathrm{d}x-x^2(x+ry)\mathrm{d}(sy-x)$,\\
                    \hspace{1cm} \text{où}\hspace{1mm}
                    \begin{small}$r\hspace{0.17mm}s\hspace{0.17mm}t(r+1)(r+2)(r+3)(2r+3)(s-t-1)(tu^3-r^2su-r^2v)\neq0$\end{small},\quad \begin{small}$u=2r+3$\end{small}
                    \hspace{2mm}\text{et}\hspace{2mm}
                    \begin{small}$v=r(r+2)$\end{small};
                    \smallskip
\item [\texttt{4.}] $uy^2\left((2r+3)x-(r+2)y\right)\mathrm{d}(y-sx)-x^2(x+ry)\mathrm{d}(ty-x)$,\\
                    \hspace{1cm} \text{où}\hspace{1mm}
                    \begin{small}$ur(r+1)(r+2)(r+3)(2r+3)(st-1)(su+t-u-1)(uv^4+suwv^3+r^2twv+r^2w^2)\neq0$\end{small},\\
                    \begin{small}$v=2r+3$\end{small}
                    \hspace{2mm}\text{et}\hspace{2mm}
                    \begin{small}$w=r(r+2)$\end{small}.
\end{itemize}
Ces quatre modèles sont respectivement de types $3\cdot\mathrm{R}_1+1\cdot\mathrm{T}_1,\hspace{1mm}2\cdot\mathrm{R}_1+2\cdot\mathrm{T}_1,\hspace{1mm}1\cdot\mathrm{R}_1+3\cdot\mathrm{T}_1,
\hspace{1mm}4\cdot\mathrm{T}_1.$
}
\end{lem}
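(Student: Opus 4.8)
\emph{Strategy.} The plan is to stratify according to the nature of the four lines composing $\Dtr$. Since $\deg\mathcal{T}_{\mathcal{H}}=4$ means exactly that $\Dtr$ is reduced, the rational map $\Gunderline$ of degree $d=3$ has four pairwise distinct, simple critical points; by the dictionary recalled just before Remark~\ref{rem:2d-2-R1}, each is either fixed (a radial singularity, contributing $\mathrm{R}_1$) or non-fixed (a transverse inflection line, contributing $\mathrm{T}_1$). Remark~\ref{rem:2d-2-R1} forbids all four from being fixed, so $\mathcal{T}_{\mathcal{H}}$ is one of $3\mathrm{R}_1+\mathrm{T}_1$, $2\mathrm{R}_1+2\mathrm{T}_1$, $\mathrm{R}_1+3\mathrm{T}_1$, $4\mathrm{T}_1$, indexed by the number $k\in\{3,2,1,0\}$ of fixed critical points. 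These four strata will produce models $1$--$4$ in this order.

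First I would set up one normal form covering all four cases. Writing $A=\sum_i a_ix^{3-i}y^i$, $B=\sum_i b_ix^{3-i}y^i$, the explicit expression of $\Dtr$ displayed in the proof of Lemma~\ref{lem:2T1+1TR2} shows that $\{x=0\}$ and $\{y=0\}$ are among the four critical lines precisely when $a_2b_3-a_3b_2=0$ and $a_0b_1-a_1b_0=0$; by the triple transitivity of the linear group acting by conjugation on $\Gunderline$ on $\sph$, I may assume this. These two relations force the $(x^3,x^2y)$-parts and the $(xy^2,y^3)$-parts of $A$ and $B$ to be pairwise proportional, so that $A$ and $B$ both lie in the plane spanned by $U:=x^2(x+ry)$ and $V:=y^2((2r+3)x-(r+2)y)$. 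The residual torus fixing $\{x=0,y=0\}$ is then used to send the third critical line to $\{x=y\}$; this is legitimate because $U_xV_y-U_yV_x$ is a nonzero scalar multiple of $xy(x-y)\big((2r+3)x+r(r+2)y\big)$, which exhibits $\{x=y\}$ as a critical line and forces the displayed $V$, with $r$ the residual modulus (equivalently the position of the fourth critical line $\{(2r+3)x+r(r+2)y=0\}$). Consequently $\omega=U\,\mathrm{d}\ell_U+V\,\mathrm{d}\ell_V$ for two affine-linear forms $\ell_U,\ell_V$, the remaining freedom being exactly the one carried by $s,t,u$.

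The heart of the argument is to read off $k$ from $\ell_U,\ell_V$. A critical line is radial iff it is invariant, i.e. iff it divides $\Cinv=xA+yB=U\cdot i_{\mathrm{R}}\mathrm{d}\ell_U+V\cdot i_{\mathrm{R}}\mathrm{d}\ell_V$; a short evaluation gives that $\{y=0\}$ is radial iff $\mathrm{d}\ell_U$ has no $\mathrm{d}x$-component, $\{x=0\}$ is radial iff $\mathrm{d}\ell_V$ has no $\mathrm{d}y$-component, and $\{x=y\}$ is radial iff a single linear relation between the coefficients of $\ell_U,\ell_V$ holds. I would then impose the pattern prescribed by $k$ and normalize $\mathrm{d}\ell_U,\mathrm{d}\ell_V$ with the residual scalings of $\omega$, $U$ and $V$. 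For $k=0$ one gets $\mathrm{d}\ell_V=u\,\mathrm{d}(y-sx)$ and $\mathrm{d}\ell_U=-\mathrm{d}(ty-x)$, i.e. model $4$; forcing $\{x=0\}$ radial kills the $\mathrm{d}y$-component of $\mathrm{d}\ell_V$ and yields model $3$; forcing in addition $\{y=0\}$ radial yields model $2$ with $\omega=sV\,\mathrm{d}x-U\,\mathrm{d}y$; and the extra relation for $\{x=y\}$ radial reads $(r+1)(s-1)=0$, i.e. $s=1$ since $r\neq-1$, collapsing model $2$ to model $1$. This simultaneously explains the four types and their nested structure.

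Finally I would check that the displayed open conditions are sharp. The factor $r(r+1)(r+2)(r+3)(2r+3)$ is exactly the condition that the four critical lines $\{x=0\},\{y=0\},\{x=y\},\{(2r+3)x+r(r+2)y=0\}$ be pairwise distinct, i.e. that $\Dtr$ be reduced; the remaining factors (such as $s-1$ in model $2$, or $s(2r+3)^2-r^2$, $st-1$ in models $2$--$4$) encode both $\pgcd(A,B)=1$ and the requirement that the radial/transverse pattern be the intended one and not degenerate to a higher stratum. The main obstacle I anticipate is precisely this last bookkeeping: one must discard the degenerate branches of the proportionality relations — those in which an edge-part of $A$ or $B$ vanishes, which I expect to force a common factor of $A$ and $B$ and hence be excluded by $\pgcd(A,B)=1$ — track the residual symmetry carefully enough that the normalization is exhaustive and that $r,s,t,u$ are faithful coordinates on each stratum, and confirm that every factor in the stated inequalities corresponds to a genuine degeneration, so that the four models together with their open conditions describe the stratum $\deg\mathcal{T}_{\mathcal{H}}=4$ exactly once.
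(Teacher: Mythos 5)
Your strategy is, at bottom, the paper's own: exclude the type $4\cdot\mathrm{R}_1$ by Remark~\ref{rem:2d-2-R1}, use linear conjugation to place three of the four critical lines at $x=0$, $y=0$, $x=y$, and then determine the coefficients of $A$ and $B$, the four types being distinguished by which of these lines divide $\Cinv$. Where the paper normalizes $\Dtr=c\,xy(y-x)(y-\alpha x)$ and eliminates in the resulting system of five bilinear equations separately in each case (doing only two cases explicitly and declaring the other two analogous), you use just the two equations expressing $xy\mid\Dtr$, namely $a_0b_1=a_1b_0$ and $a_2b_3=a_3b_2$, to get $A,B\in\langle U,V\rangle$, write $\omega=U\,\mathrm{d}\ell_U+V\,\mathrm{d}\ell_V$, pin down $V$ via the Jacobian identity $U_xV_y-U_yV_x=6\,xy(x-y)\bigl((2r+3)x+r(r+2)y\bigr)$, and read the radiality of each line on the linear forms $\ell_U,\ell_V$. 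I have checked these computations: $\Cinv=U\ell_U+V\ell_V$, so $y\mid\Cinv$ iff $\ell_U$ has no $x$-term, $x\mid\Cinv$ iff $\ell_V$ has no $y$-term, and $\Cinv(1,1)=(r+1)\bigl(\ell_U(1,1)+\ell_V(1,1)\bigr)$. This uniform treatment is a genuine gain in organization: it produces the four models as nested specializations of one normal form, and it covers in one stroke the two cases the paper leaves as ``analogous''.

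There is, however, one step which as written is wrong and would break exhaustiveness. You propose to discard the degenerate branches of the proportionality relations --- those in which an edge-part of $A$ or of $B$ vanishes --- on the expectation that they force $\pgcd(A,B)\neq1$. They do not, and they must be kept: the vanishing of the $(x^3,x^2y)$-part of $A$ is exactly the vanishing of the $\mathrm{d}x$-component of $\mathrm{d}\ell_U$, i.e.\ by your own dictionary the condition that $y=0$ be radial, and the vanishing of the $(xy^2,y^3)$-part of $B$ is the condition that $x=0$ be radial; these branches are precisely where models 1--3 live. Model 1, for instance, has $A$ proportional to $V$ and $B$ proportional to $U$, with $\pgcd(A,B)=1$. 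So if you discarded these branches in the derivation of the span structure, the specializations you perform afterwards would be imposing conditions on a locus you had already (wrongly) emptied, and models 1--3 would never be reached. What $\pgcd(A,B)=1$ actually excludes is only the simultaneous vanishing of the low parts of both $A$ and $B$ (then $y^2\mid\pgcd(A,B)$) or of the high parts of both (then $x^2\mid\pgcd(A,B)$); this is also what guarantees that the common direction of the low parts has nonzero $x^3$-coefficient, so that $U$ can indeed be normalized to $x^2(x+ry)$, and symmetrically for $V$. With this correction the span structure holds on every branch, your radiality analysis applies uniformly, and the argument goes through, up to the verification --- which you only sketch, but which the paper carries out by direct computation --- that the stated inequalities express exactly $\pgcd(A,B)=1$ together with the transversality of the lines meant to be transverse.
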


\begin{proof}[\sl D\'emonstration]
D'après la Remarque \ref{rem:2d-2-R1} le feuilletage $\mathcal{H}$ ne peut être de type $4\cdot\mathrm{R}_{1};$ nous sommes donc dans l'une des situations suivantes
\begin{itemize}
\item [(i)] $\mathcal{T}_{\mathcal{H}}=3\cdot\mathrm{R}_{1}+1\cdot\mathrm{T}_{1}\hspace{1mm};$

\item [(ii)] $\mathcal{T}_{\mathcal{H}}=2\cdot\mathrm{R}_{1}+2\cdot\mathrm{T}_{1};$

\item [(iii)] $\mathcal{T}_{\mathcal{H}}=1\cdot\mathrm{R}_{1}+3\cdot\mathrm{T}_{1};$

\item [(iv)] $\mathcal{T}_{\mathcal{H}}=4\cdot\mathrm{T}_{1}.$
\end{itemize}
\`{A} conjugaison linéaire près nous pouvons nous ramener à $\Dtr=cxy(y-x)(y-\alpha x)$ pour certains $c,\alpha\in \mathbb{C}^*,\alpha\neq1$. Dans la dernière éventualité nous avons $$\Cinv(0,1)\Cinv(1,0)\Cinv(1,1)\Cinv(1,\alpha)\neq0$$ et dans les cas (i), resp. (ii), resp. (iii) nous pouvons supposer que
\begin{align*}
\left\{
\begin{array}[c]{c}
\Cinv(0,1)=0
\\
\Cinv(1,0)=0
\\
\Cinv(1,1)=0
\\
\Cinv(1,\alpha)\neq0
\end{array}
\right.
\hspace{2mm}
\text{resp.}\hspace{1.5mm}\left\{
\begin{array}[c]{c}
\Cinv(0,1)=0
\\
\Cinv(1,0)=0
\\
\Cinv(1,1)\neq0
\\
\Cinv(1,\alpha)\neq0
\end{array}
\right.
\hspace{2mm}
\text{resp.}\hspace{1.5mm}\left\{
\begin{array}[c]{c}
\Cinv(0,1)=0
\\
\Cinv(1,0)\neq0
\\
\Cinv(1,1)\neq0
\\
\Cinv(1,\alpha)\neq0
\end{array}
\right.
\end{align*}
Comme dans le lemme précédent, en écrivant
$$A(x,y)=a_{0}x^3+a_{1}x^2y+a_{2}xy^2+a_{3}y^3\qquad \text{et} \qquad B(x,y)=b_{0}x^3+b_{1}x^2y+b_{2}xy^2+b_{3}y^3$$
nous obtenons que
\begin{equation}\label{equa:deg-type=4}
\Dtr=cxy(y-x)(y-\alpha x)\hspace{4mm}\Leftrightarrow\hspace{4mm}\left\{\begin{array}{lllll}
a_0\hspace{0.2mm}b_1=a_1b_0\\
a_2b_3=a_3b_2\\
2(a_1b_3-a_3b_1)=c\\
2(a_0b_2-a_2b_0)=c\alpha\\
3a_0b_3+a_1b_2-a_2b_1-3a_3b_0=-c(\alpha+1)
\end{array}\right.
\end{equation}
Envisageons l'éventualité (iv). Comme $c\neq0,$ $a_0=\Cinv(1,0)\neq0$\, et \,$b_3=\Cinv(0,1)\neq0$, le système (\ref{equa:deg-type=4}) est équivalent à
\begin{equation*}
\left\{
\begin{array}{lllll}
b_1=\dfrac{a_1b_0}{a_0}\\
a_2=\dfrac{a_3b_2}{b_3}\\
c=\dfrac{2a_1(a_0b_3-a_3b_0)}{a_0}\\
a_0\hspace{0.2mm}b_2-\alpha a_1b_3=0\\
(3a_0+2\alpha a_1+2a_1)b_3+a_1b_2=0
\end{array}
\right.
\hspace{4mm}\Leftrightarrow\hspace{4mm}
\left\{
\begin{array}{lllll}
b_1=\dfrac{a_1b_0}{a_0}\\
a_2=\dfrac{a_3a_1\alpha}{a_0}\\
c=\dfrac{2a_1(a_0b_3-a_3b_0)}{a_0}\\
b_2=\dfrac{a_1b_3\alpha}{a_0}\\
a_1(a_1+2a_0)\alpha+a_0(2a_1+3a_0)=0
\end{array}
\right.
\end{equation*}
Donc $a_1\neq0$ et puisque $\alpha\neq0$, le produit $(a_1+2a_0)(2a_1+3a_0)$ est non nul. Il s'en suit que
\begin{align*}
& a_2=-\dfrac{a_3(2a_1+3a_0)}{a_1+2a_0},&& b_1=\dfrac{a_1b_0}{a_0},&& b_2=-\dfrac{b_3(2a_1+3a_0)}{a_1+2a_0},\\
& \alpha=-\dfrac{a_0(2a_1+3a_0)}{a_1(a_1+2a_0)},&& c=\dfrac{2a_1(a_0b_3-a_3b_0)}{a_0}.
\end{align*}
Posons $r=\dfrac{a_1}{a_0},\quad s=-\dfrac{a_3}{b_3},\quad t=-\dfrac{b_0}{a_0},\quad u=-\dfrac{b_3}{a_1+2a_0}$; alors
\begin{align*}
\hspace{6mm}& b_0=-ta_0,&& b_1=-rta_0,&& b_2=(2r+3)ua_0,&& b_3=-u(r+2)a_0,\\
\hspace{6mm}& a_1=ra_0,&& a_2=-su(2r+3)a_0,&& a_3=su(r+2)a_0,\\
\hspace{6mm}& \alpha=-\frac{2r+3}{r(r+2)},&& c=2r(r+2)u(st-1)a_0^{2}.
\end{align*}
Quitte à remplacer $\omega$ par $\dfrac{1}{a_0}\omega,$ le coefficient $a_0$ vaut $1$ et $\omega$ s'écrit
\begin{eqnarray*}
\omega\hspace{-1mm}&=&\hspace{-1mm}\left(x^3+rx^2y-su(2r+3)xy^2+su(r+2)y^3\right)\mathrm{d}x+\left(-tx^3-rtx^2y+u(2r+3)xy^2-u(r+2)y^3\right)\mathrm{d}y
\\
\hspace{-1mm}&=&\hspace{-1mm}uy^2\left((2r+3)x-(r+2)y\right)\mathrm{d}(y-sx)-x^2(x+ry)\mathrm{d}(ty-x)
\hspace{1mm};
\end{eqnarray*}
un calcul direct montre que la condition $c\alpha(\alpha-1)\Cinv(0,1)\Cinv(1,0)\Cinv(1,1)\Cinv(1,\alpha)\neq0$ est équivalente à
\begin{align*}
ur(r+1)(r+2)(r+3)(2r+3)(st-1)(su+t-u-1)(uv^4+suwv^3+r^2twv+r^2w^2)\neq0
\end{align*}
avec $v=2r+3$\, et \,$w=r(r+2).$

\noindent Maintenant nous étudions la possibilité (iii). Dans ce cas nous avons $b_3=\Cinv(0,1)=0$ et $a_0=\Cinv(1,0)\neq0$; le système (\ref{equa:deg-type=4}) conduit à
\begin{align*}
& a_2=-\dfrac{a_3(2a_1+3a_0)}{a_1+2a_0},&& b_1=\dfrac{a_1b_0}{a_0},&& b_2=0,&& \alpha=-\dfrac{a_0(2a_1+3a_0)}{a_1(a_1+2a_0)},&& c=-\dfrac{2a_1a_3b_0}{a_0}.
\end{align*}
En posant $r=\dfrac{a_1}{a_0},\quad s=-\dfrac{b_0}{a_0}$\hspace{2mm} et \hspace{2mm}$t=-\dfrac{a_3}{a_1+2a_0},$ nous obtenons que
\begin{align*}
\hspace{6mm}& b_0=-sa_0,&& b_1=-rsa_0,&& b_2=b_3=0,&& c=-2rst(r+2)a_0^{2}, \\
\hspace{6mm}& a_1=ra_0,&& a_2=t(2r+3)a_0,&& a_3=-t(r+2)a_0,&& \alpha=-\frac{2r+3}{r(r+2)}.
\end{align*}
Quitte à diviser $\omega$ par $a_0$ on se ramène à
\begin{eqnarray*}
\omega\hspace{-1mm}&=&\hspace{-1mm}\left(x^3+rx^2y+t(2r+3)xy^2-t(r+2)y^3\right)\mathrm{d}x-sx^2(x+ry)\mathrm{d}y
\\
\hspace{-1mm}&=&t\hspace{0.12mm}y^2\left((2r+3)x-(r+2)y\right)\mathrm{d}x-x^2(x+ry)\mathrm{d}(sy-x),
\end{eqnarray*}
et la non nullité du produit $c\alpha(\alpha-1)\Cinv(1,0)\Cinv(1,1)\Cinv(1,\alpha)$ se traduit par
\begin{align*}
r\hspace{0.17mm}s\hspace{0.17mm}t(r+1)(r+2)(r+3)(2r+3)(s-t-1)(tu^3-r^2su-r^2v)\neq0,
\hspace{2mm}\text{avec}\hspace{2mm}
u=2r+3\hspace{2mm} \text{et} \hspace{2mm}v=r(r+2).
\end{align*}
\noindent Les deux premiers cas se traitent de façon analogue.
\end{proof}

\begin{proof}[\sl D\'emonstration du Théorème~\ref{thm:Class-Homog3-Plat}]
\textsl{Premier cas}: $\deg\mathcal{T}_{\mathcal{H}}=2.$ Dans ce cas le $3$-tissu $\Leg\mathcal{H}$ est plat si et seulement si la $1$-forme $\omega$ définissant $\mathcal{H}$ est linéairement conjuguée à l'une des deux $1$-formes
$$\omega_1=y^3\mathrm{d}x-x^3\mathrm{d}y\qquad\hspace{1.5mm} \text{et} \qquad\hspace{1.5mm} \omega_2=x^3\mathrm{d}x-y^3\mathrm{d}y.$$
C'est une application directe de la Proposition~\ref{pro:omega1-omega2} pour $d=3.$

\noindent\textsl{Second cas}: $\deg\mathcal{T}_{\mathcal{H}}=3.$
\begin{itemize}
  \item [$\bullet$] Si $\mathcal{T}_\mathcal{H}=2\cdot\mathrm{R}_1+1\cdot\mathrm{R}_2$, resp. $\mathcal{T}_\mathcal{H}=2\cdot\mathrm{R}_1+1\cdot\mathrm{T}_2$, alors, d'après la Proposition~\ref{pro:omega3-omega4}, $\Leg\mathcal{H}$ est plat si et seulement si $\omega$ est conjuguée à
      \[
        \hspace{1cm}\omega_{3}^{\hspace{0.2mm}3,1}=\,\sum\limits_{i=2}^{3}\binom{{3}}{{i}}x^{3-i}y^i\mathrm{d}x-
        \sum\limits_{i=0}^{1}\binom{{3}}{{i}}x^{3-i}y^i\mathrm{d}y\,=\,y^2(3x+y)\mathrm{d}x-x^2(x+3y)\mathrm{d}y\,=\,\omega_3,
      \]
      \[
        \text{resp}.\hspace{1.5mm}
        \omega_{4}^{\hspace{0.2mm}3,1}=\,\sum\limits_{i=2}^{3}\binom{{3}}{{i}}x^{3-i}y^i\mathrm{d}x+
        \sum\limits_{i=0}^{1}\binom{{3}}{{i}}x^{3-i}y^i\mathrm{d}y\,=\,y^2(3x+y)\mathrm{d}x+x^2(x+3y)\mathrm{d}y\,=\,\omega_4.
      \]
  \item [$\bullet$]  Si $\mathcal{T}_\mathcal{H}=1\cdot\mathrm{R}_1+1\cdot\mathrm{T}_1+1\cdot\mathrm{R}_2$, resp. $\mathcal{T}_\mathcal{H}=1\cdot\mathrm{R}_1+1\cdot\mathrm{T}_1+1\cdot\mathrm{T}_2$, alors, d'après la Proposition~\ref{pro:omega5-omega6}, $\Leg\mathcal{H}$ est plat si et seulement si $\omega$ est conjuguée à
      \[
        \hspace{-1.5cm}\omega_{5}^{\hspace{0.2mm}3}\,=\,2y^3\mathrm{d}x+x^2(3y-2x)\mathrm{d}y\,=\,\omega_5,
      \]
      \[
        \text{resp}.\hspace{1.5mm}\omega_{6}^{\hspace{0.2mm}3}\,=\,(4x^3-6x^2y+4y^3)\mathrm{d}x+x^2(3y-2x)\mathrm{d}y\,=\,\omega_6.
      \]
  \item [$\bullet$] Si $\mathcal{T}_\mathcal{H}=2\cdot\mathrm{T}_1+1\cdot\mathrm{R}_2$, alors, d'après le Lemme~\ref{lem:2T1+1TR2}, la $1$-forme $\omega$ est du type
      \[
       \omega=y^3\mathrm{d}x+\left(\beta\,x^3-3\beta\,xy^2+\alpha\,y^3\right)\mathrm{d}y,\qquad \beta\left((2\beta-1)^2-\alpha^2\right)\neq0,
      \]
      et dans ce cas nous avons $\ItrH=(y-x)(y+x)$. D'après le Corollaire~\ref{cor:platitude-degre-3}, le $3$-tissu $\Leg\mathcal{H}$ est plat si et seulement si $$\hspace{1cm} 0=Q(1,1;-1,1)=(2\beta+2-\alpha)\beta \qquad \text{et} \qquad 0=Q(1,-1;1,1)=-(2\beta+2+\alpha)\beta,$$ {\it i.e.} si et seulement si $\alpha=0$\, et \,$\beta=-1$, auquel cas $\omega=\omega_{\hspace{0.3mm}7}=y^3\mathrm{d}x+x(3y^2-x^2)\mathrm{d}y.$
 \item [$\bullet$] Dans ce deuxième cas, il ne nous reste plus qu'à traiter l'éventualité $\mathcal{T}_\mathcal{H}=2\cdot\mathrm{T}_1+1\cdot\mathrm{T}_2$. Toujours d'après le Lemme~\ref{lem:2T1+1TR2}, $\omega$ est, à conjugaison près, de la forme
     \[
      \hspace{1cm}\omega=\left(x^3-3xy^2+\alpha\,y^3\right)\mathrm{d}x+\left(\delta\,x^3-3\delta\,xy^2+\beta\,y^3\right)\mathrm{d}y,\hspace{3mm} (\beta-\alpha\delta)\left((\beta-2)^2-(\alpha-2\delta)^2\right)\neq0\hspace{1mm};
     \]
     comme $\ItrH=y^2(y-x)(y+x)$ le $3$-tissu $\Leg\mathcal{H}$ est plat si et seulement si
     \[
     \left\{
      \begin{array}[l]{l}
      0\equiv\mathrm{d}\omega\Big|_{y=0}=3\delta\,x^{2}\mathrm{d}x\wedge\mathrm{d}y
      \\
      0=Q(1,1;-1,1)=(4+\beta-2\alpha-2\delta)(\beta-\alpha\delta)
      \\
      0=Q(1,-1;1,1)=(4+\beta+2\alpha+2\delta)(\beta-\alpha\delta),
     \end{array}
     \right.
    \]
     en vertu du Corollaire~\ref{cor:platitude-degre-3}. Il s'en suit que $\Leg\mathcal{H}$ est plat si et seulement si $\alpha=\delta=0$\, et \,$\beta=-4$, auquel cas $\omega=\omega_8=x(x^2-3y^2)\mathrm{d}x-4y^3\mathrm{d}y.$
\end{itemize}

\noindent\textsl{Troisième cas}: $\deg\mathcal{T}_{\mathcal{H}}=4.$ Pour examiner la platitude dans ce dernier cas, nous allons appliquer le Corollaire~\ref{cor:platitude-degre-3} aux différents modèles du Lemme~\ref{lem:deg-type=4}.
\begin{itemize}
  \item [$\bullet$] Si $\mathcal{T}_\mathcal{H}=3\cdot\mathrm{R}_1+1\cdot\mathrm{T}_1$, alors $\omega$ est du type
  \[
    \omega=y^2\left((2r+3)x-(r+2)y\right)\mathrm{d}x-x^2(x+ry)\mathrm{d}y
  \]
  avec $r(r+1)(r+2)(r+3)(2r+3)\neq0$. Nous avons $\ItrH=sx+ty$ où $s=2r+3$\, et \,$t=r(r+2)$; par suite le $3$-tissu $\Leg\mathcal{H}$ est plat si et seulement si $$0=Q(t,-s\hspace{0.2mm};s,t)=r(r+1)^2(r+2)^2(r+3)(2r+3)\left[r^2+3r+3\right],$$ {\it i.e.} si et seulement si $r=-\dfrac{3}{2}\pm\mathrm{ i}\dfrac{\sqrt{3}}{2}$. Dans les deux cas la $1$-forme $\omega$ est linéairement conjuguée à $$\omega_9=y^{2}\left((-3+\mathrm{i}\sqrt{3})x+2y\right)\mathrm{d}x+x^{2}\left((1+\mathrm{i}\sqrt{3})x-2\mathrm{i}\sqrt{3}y\right)\mathrm{d}y\hspace{1mm};$$ en effet si $r=-\dfrac{3}{2}-\mathrm{ i}\dfrac{\sqrt{3}}{2},$\, resp. $r=-\dfrac{3}{2}+\mathrm{ i}\dfrac{\sqrt{3}}{2}$,\, alors
  $$
  \hspace{1cm}\omega_9=-(1+\mathrm{i}\sqrt{3})\omega,
  \hspace{2cm}\text{resp}.\hspace{1.5mm}
  \omega_9=-2\varphi^*\omega,\quad \text{où}\hspace{1.5mm} \varphi(x,y)=(y,x).
  $$
  \item [$\bullet$] Si $\mathcal{T}_\mathcal{H}=2\cdot\mathrm{R}_1+2\cdot\mathrm{T}_1$, alors $\omega$ est de la forme
  \[
   \omega=s\hspace{0.12mm}y^2\left((2r+3)x-(r+2)y\right)\mathrm{d}x-x^2(x+ry)\mathrm{d}y
  \]
   avec $rs(s-1)(r+1)(r+2)(r+3)(2r+3)\left(s(2r+3)^2-r^2\right)\neq0$. Posons $t=2r+3$ et $u=r(r+2)$; nous avons $\ItrH=(y-x)(tx+uy).$ Donc $\Leg\mathcal{H}$ est plat si et seulement si
  \[
   \hspace{2.5cm}
   \left\{
   \begin{array}[l]{l}
   0=Q(1,1;-1,1)=-s(r+1)^2\left[s(r+2)+1\right]
   \\
   0=Q(u,-t;\hspace{0.2mm}t,u)=rs(r+1)^2(r+2)^2(2r+3)\left[s(2r+3)^2+(r+2)r^2\right],
   \end{array}
   \right.
  \]
  {\it i.e.} si et seulement si $r=\pm\sqrt{3}$\, et \,$s=-2+r$, car $rs(r+1)(r+2)(2r+3)\neq0.$ Dans les deux cas $\omega$ est linéairement conjuguée à $$\omega_{10}=(3x+\sqrt{3}y)y^2\mathrm{d}x+(3y-\sqrt{3}x)x^2\mathrm{d}y\hspace{1mm};$$ en effet si $(r,s)=(-\sqrt{3},-2-\sqrt{3})$,\, resp. $(r,s)=(\sqrt{3},-2+\sqrt{3})$, alors
  $$
  \hspace{1cm}\omega_{10}=\sqrt{3}\omega,
  \hspace{2cm}\text{resp}.\hspace{1.5mm}
  \omega_{10}=-\sqrt{3}\hspace{0.2mm}\varphi^*\omega,\quad \text{où}\hspace{1.5mm} \varphi(x,y)=(x,-y).
  $$
  \item [$\bullet$] Si $\mathcal{T}_\mathcal{H}=1\cdot\mathrm{R}_1+3\cdot\mathrm{T}_1$, alors $\omega$ est du type
  \[
   \omega=t\hspace{0.12mm}y^2\left((2r+3)x-(r+2)y\right)\mathrm{d}x-x^2(x+ry)\mathrm{d}(sy-x)
  \]
     avec\hspace{1mm} $rst(r+1)(r+2)(r+3)(2r+3)(s-t-1)(tu^3-r^2su-r^2v)\neq0,$\hspace{1mm} $u=2r+3$\, et \,$v=r(r+2).$ Puisque $\ItrH=y(y-x)(ux+vy)$ la courbure de $\Leg\mathcal{H}$ est holomorphe le long de $\mathcal{G}_{\mathcal{H}}(\{y(y-x)=0\})$ si et seulement si
  \[
   \hspace{2.5cm}
   \left\{
   \begin{array}[l]{l}
   0=Q(1,0\hspace{0.2mm};0,1)=st\left[(2r+3)s-(r+2)\right]
   \\
   0=Q(1,1;-\hspace{0.2mm}1,1)=-st(r+1)^2\left[(r+2)(t+1)+s\right],
   \end{array}
   \right.
 \]
  {\it i.e.} si et seulement si $s=\dfrac{r+2}{2r+3}$\, et \,$t=-\dfrac{2(r+2)}{2r+3},$ auquel cas $K(\Leg\mathcal{H})$ ne peut être holomorphe sur $\mathcal{G}_{\mathcal{H}}(\{ux+vy=0\})$ car $$\hspace{1cm}Q(v,-u\hspace{0.2mm};u,v)=12\,r(r+1)^3(r+2)^5(r+3)(2r+3)^{-2}\neq0.$$ Par conséquent la transformée de \textsc{Legendre} $\Leg\mathcal{H}$ de $\mathcal{H}$ ne peut être plate lorsque $\mathcal{T}_\mathcal{H}=1\cdot\mathrm{R}_1+3\cdot\mathrm{T}_1.$
  \item [$\bullet$] Si $\mathcal{T}_\mathcal{H}=4\cdot\mathrm{T}_1$, alors $\omega$ est de la forme
  \[
   \omega=uy^2\left((2r+3)x-(r+2)y\right)\mathrm{d}(y-sx)-x^2(x+ry)\mathrm{d}(ty-x),
  \]
   où $ur(r+1)(r+2)(r+3)(2r+3)(st-1)(su+t-u-1)(uv^4+suwv^3+r^2twv+r^2w^2)\neq0$, $v=2r+3$\, et \,$w=r(r+2).$ Comme $\ItrH=xy(y-x)(vx+wy)$ la courbure de $\Leg\mathcal{H}$ est holomorphe le long de $\mathcal{G}_{\mathcal{H}}(\{xy(y-x)=0\})$ si et seulement si
  \[
   \hspace{2.5cm}
   \left\{
   \begin{array}[l]{l}
   0=Q(0,-1;\hspace{0.2mm}1,0)=-u^2(r+2)^2(st-1)\left[rs+1\right]
   \\
   0=Q(1,0\hspace{0.2mm};0,1)=-u(st-1)\left[(2r+3)t-r-2\right]
   \\
   0=Q(1,1;\hspace{0.2mm}-1,1)=-u(r+1)^2(st-1)\left[(rs+2s+1)u-t-r-2\right],
   \end{array}
   \right.
  \]
   {\it i.e.} si et seulement si $s=-\dfrac{1}{r}$,\, $t=\dfrac{r+2}{2r+3}$\, et \,$u=-\dfrac{r(r+2)^2}{2r+3},$ auquel cas $$\hspace{-1cm}Q(w,-v\hspace{0.2mm};v,w)=16r(r+1)^5(r+2)^5(r+3)(2r+3)^{-2}\left[r^2+3r+3\right].$$ Par suite $\Leg\mathcal{H}$ est plat si et seulement si nous sommes dans l'un des deux cas suivants
   \begin{itemize}
   \item [(i)]  $r=-\dfrac{3}{2}+\mathrm{ i}\dfrac{\sqrt{3}}{2},\quad s=\dfrac{1}{2}+\mathrm{i}\dfrac{\sqrt{3}}{6},\quad
                 t=\dfrac{1}{2}-\mathrm{ i}\dfrac{\sqrt{3}}{6},\quad u=1\hspace{1mm};$
   \item [(ii)] $r=-\dfrac{3}{2}-\mathrm{ i}\dfrac{\sqrt{3}}{2},\quad s=\dfrac{1}{2}-\mathrm{i}\dfrac{\sqrt{3}}{6},\quad
                 t=\dfrac{1}{2}+\mathrm{ i}\dfrac{\sqrt{3}}{6},\quad u=1.$
   \end{itemize}
   Dans les deux cas la $1$-forme $\omega$ est linéairement conjuguée à $$\omega_{11}=(3x^3+3\sqrt{3}x^2y+3xy^2+\sqrt{3}y^3)\mathrm{d}x+(\sqrt{3}x^3+3x^2y+3\sqrt{3}xy^2+3y^3)\mathrm{d}y\hspace{1mm};$$ en effet dans les cas (i), resp. (ii) nous avons
   \begin{align*}
   &&\omega_{11}=3\varphi_1^*\omega,\quad \text{où}\hspace{1.5mm} \varphi_1=(x,\mathrm{e}^{-5\mathrm{i}\pi/6}\,y),
   \hspace{1cm}\text{resp}.\hspace{1.5mm}
   \omega_{11}=3\varphi_2^*\omega,\quad \text{où}\hspace{1.5mm} \varphi_2=(x,\mathrm{e}^{5\mathrm{i}\pi/6}\,y).
   \end{align*}
\end{itemize}
\end{proof}

\noindent Une particularité remarquable de la classification obtenue est que toutes les singularités des feuilletages $\mathcal{H}_{i}$, $i=1,\ldots,11$, sur la droite à l'infini sont non-dégénérées. Nous aurons besoin dans le prochain paragraphe des valeurs des indices $\mathrm{CS}(\mathcal{H}_{i},L_{\infty},s)$, $s\in\Sing\mathcal{H}_{i}\cap L_{\infty}$. Pour cela, nous avons calculé, pour chaque $i=1,\ldots,11$, le polynôme suivant (dit \textsl{polynôme de \textsc{Camacho-Sad} du feuilletage homogène} $\mathcal{H}_{i}$)
\begin{align*}
\mathrm{CS}_{\mathcal{H}_{i}}(\lambda)=\prod\limits_{s\in\Sing\mathcal{H}_{i}\cap L_{\infty}}(\lambda-\mathrm{CS}(\mathcal{H}_{i},L_{\infty},s)).
\end{align*}
\noindent Le tableau suivant résume les types et les polynômes de \textsc{Camacho-Sad} des feuilletages $\mathcal{H}_{i}$, $i=1,\ldots,11$.
\begingroup
\renewcommand*{\arraystretch}{1.5}
\begin{table}[h]
\begin{center}
\begin{tabular}{|c|c|c|}\hline
$i$  &  $\mathcal{T}_{\mathcal{H}_{\hspace{0.2mm}i}}$              &  $\mathrm{CS}_{\mathcal{H}_{\hspace{0.2mm}i}}(\lambda)$ \\\hline
$1$  &  $2\cdot\mathrm{R}_2$                                       &  $(\lambda-1)^{2}(\lambda+\frac{1}{2})^{2}$              \\\hline
$2$  &  $2\cdot\mathrm{T}_2$                                       &  $(\lambda-\frac{1}{4})^{4}$                              \\\hline
$3$  &  $2\cdot\mathrm{R}_1+1\cdot\mathrm{R}_2$                    &  $(\lambda-1)^{3}(\lambda+2)$                              \\\hline
$4$  &  $2\cdot\mathrm{R}_1+1\cdot\mathrm{T}_2$                    &  $(\lambda-1)^{2}(\lambda+\frac{1}{2})^{2}$                 \\\hline
$5$  &  $1\cdot\mathrm{R}_1+1\cdot\mathrm{T}_1+1\cdot\mathrm{R}_2$ &  $(\lambda-1)^{2}(\lambda+\frac{1}{5})(\lambda+\frac{4}{5})$ \\\hline
$6$  &  $1\cdot\mathrm{R}_1+1\cdot\mathrm{T}_1+1\cdot\mathrm{T}_2$ &  $(\lambda-1)(\lambda+\frac{2}{7})(\lambda-\frac{1}{7})^{2}$  \\\hline
$7$  &  $2\cdot\mathrm{T}_1+1\cdot\mathrm{R}_2$                    &  $(\lambda-1)(\lambda-\frac{1}{4})(\lambda+\frac{1}{8})^{2}$   \\\hline
$8$  &  $2\cdot\mathrm{T}_1+1\cdot\mathrm{T}_2$                    &  $(\lambda-\frac{1}{10})^{2}(\lambda-\frac{2}{5})^{2}$          \\\hline
$9$  &  $3\cdot\mathrm{R}_1+1\cdot\mathrm{T}_1$                    &  $(\lambda-1)^{3}(\lambda+2)$                                    \\\hline
$10$ &  $2\cdot\mathrm{R}_1+2\cdot\mathrm{T}_1$                    &  $(\lambda-1)^{2}(\lambda+\frac{1}{2})^{2}$                       \\\hline
$11$ &  $4\cdot\mathrm{T}_1$                                       &  $(\lambda-\frac{1}{4})^{4}$                                       \\\hline
\end{tabular}
\end{center}
\bigskip
\caption{Types et polynômes de \textsc{Camacho-Sad} des feuilletages homogènes donnés par le Théorème~\ref{thm:Class-Homog3-Plat}}\label{tab:CS(lambda)}
\end{table}
\endgroup

\newpage

\section{Feuilletages à singularités non-dégénérées et de transformée de \textsc{Legendre} plate}\label{sec:Fermat}
\bigskip

\noindent L'ensemble $\mathbf{F}(d)$ des feuilletages de degré $d$ sur $\pp$ est un ouvert de \textsc{Zariski} dans l'espace projectif $\mathbb{P}^{(d+2)^{2}-2}$. Le groupe des automorphismes de $\pp$ agit sur $\mathbf{F}(d)$; l'orbite d'un élément $\F\in\mathbf{F}(d)$ sous l'action de $\mathrm{Aut}(\pp)=\mathrm{PGL}_3(\mathbb{C})$ est notée $\mathcal{O}(\F)$, \emph{voir} \cite{CDGBM10}.  Le sous-ensemble $\mathbf{FP}(d)$ de $\mathbf{F}(d)$ formé des $\F\in\mathbf{F}(d)$ tels que $\Leg\F$ soit plat est un fermé de \textsc{Zariski} de $\mathbf{F}(d)$. Signalons aussi que si $\F\in\mathbf{FP}(d)$ alors l'adhérence $\overline{\mathcal{O}(\F)}$ (dans $\mathbf{F}(d)$) de $\mathcal{O}(\F)$ est contenue dans $\mathbf{FP}(d)$.

\noindent  Parmi les éléments de $\mathbf{FP}(d)$ n'ayant que des singularités non-dégénérées, il y a le \textsl{feuilletage de \textsc{Fermat}} $\F^{d}$ de degré $d$ défini en carte affine par la $1$-forme
$$\omega_{F}^{d}=(x^{d}-x)\mathrm{d}y-(y^{d}-y)\mathrm{d}x\hspace{1mm};$$
en effet, d'une part $\Leg\F^{d}$ est plat car il est algébrisable d'après \cite[Proposition~5.2]{MP13}; d'autre part, un calcul élémentaire montre que toutes les singularités du feuilletage $\F^{d}$ sont non-dégénérées. Nous savons aussi d'après \cite[Théorème~3]{MP13} que $\overline{\mathcal{O}(\F^{d})}$ est une composante irréductible de $\mathbf{FP}(d)$ pour $d\neq 4$.
\smallskip

\noindent Le théorème suivant est le résultat principal de ce paragraphe.

\begin{thm}\label{thm:Fermat}
Soit $\F$ un feuilletage de degré $3$ sur $\pp$. Supposons que toutes ses singularités soient non-dégénérées et que son $3$-tissu dual $\Leg\F$ soit plat. Alors $\F$ est linéairement conjugué au feuilletage de \textsc{Fermat} $\F^{3}$ défini par la $1$-forme $\omega_{F}^{3}=(x^{3}-x)\mathrm{d}y-(y^{3}-y)\mathrm{d}x.$
\end{thm}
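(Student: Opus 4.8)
The plan is to reduce the statement to the homogeneous classification of Theorem~\ref{thm:Class-Homog3-Plat}, by degenerating $\F$ to a homogeneous foliation inside its orbit closure and then exploiting the \textsc{Camacho--Sad} indices listed in Table~\ref{tab:CS(lambda)}. I first record the numerics: a foliation of degree $3$ carries $d^{2}+d+1=13$ singularities counted with \textsc{Milnor} number, so under the hypothesis $\F$ has exactly $13$ distinct non-degenerate singular points, while the \textsc{Baum--Bott} formula yields $\sum_{s}\mathrm{BB}(\F,s)=(d+2)^{2}=25$.

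The bridge to the homogeneous framework runs as follows. Suppose one has exhibited an invariant line $\ell$ of $\F$ and normalised it to $\ell=L_{\infty}$; then $\F$ is given in the corresponding affine chart by a $1$-form $\omega=a\,\mathrm{d}x+b\,\mathrm{d}y$ with $\deg a,\deg b\le 3$, and for the homothety $h_{\lambda}\colon(x,y)\mapsto\lambda(x,y)$ one has $\lim_{\lambda\to\infty}[h_{\lambda}^{*}\omega]=[\omega_{\mathrm{top}}]$ in $\mathbf{F}(3)$, where $\omega_{\mathrm{top}}$ is the homogeneous part of degree $3$. Generically $\omega_{\mathrm{top}}$ defines a homogeneous foliation $\mathcal{H}\in\overline{\mathcal{O}(\F)}\subseteq\mathbf{FP}(3)$ (the subcases where $\omega_{\mathrm{top}}$ drops degree being treated apart), so by Theorem~\ref{thm:Class-Homog3-Plat} $\mathcal{H}$ is linearly conjugate to one of $\mathcal{H}_{1},\dots,\mathcal{H}_{11}$. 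The decisive point is that the terms of $\omega$ of degree $<3$ vanish to higher order along $L_{\infty}$: in a chart at infinity $\F$ and $\mathcal{H}$ share the same $1$-jet at every point of $L_{\infty}$, hence the four singularities of $\F$ on $L_{\infty}$ are exactly those of $\mathcal{H}$ and carry the same \textsc{Camacho--Sad} indices. Therefore the \textsc{Camacho--Sad} polynomial of $\F$ along $L_{\infty}$ is precisely the $\mathrm{CS}_{\mathcal{H}_{i}}(\lambda)$ of Table~\ref{tab:CS(lambda)}.

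It then remains to isolate the \textsc{Fermat} model. Since $\omega_{F}^{3}=(x^{3}-x)\mathrm{d}y-(y^{3}-y)\mathrm{d}x$ has top part $x^{3}\mathrm{d}y-y^{3}\mathrm{d}x$, its homothety limit is $\mathcal{H}_{1}$ (defined by $\omega_{1}=y^{3}\mathrm{d}x-x^{3}\mathrm{d}y$), with $\mathrm{CS}_{\mathcal{H}_{1}}(\lambda)=(\lambda-1)^{2}(\lambda+\tfrac{1}{2})^{2}$; moreover $\F^{3}$ is convex, its inflection divisor being a union of nine invariant lines, and its nine affine singularities have \textsc{Baum--Bott} indices summing to $25-7=18$. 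For an arbitrary candidate I would impose simultaneously the \textsc{Camacho--Sad} relation $\sum_{s\in L_{\infty}}\mathrm{CS}(\F,L_{\infty},s)=1$ (met by every row of Table~\ref{tab:CS(lambda)}), the \textsc{Baum--Bott} identity forcing the nine singularities that collapse onto the multiple point $O$ to have indices summing to $25-\sum_{j}(\kappa_{j}+\kappa_{j}^{-1}+2)$, where the $\kappa_{j}$ are the roots of $\mathrm{CS}_{\mathcal{H}_{i}}(\lambda)$, and the effective flatness criterion of Corollaire~\ref{cor:platitude-degre-3} once $L_{\infty}$ and the four points at infinity have been put in normal form. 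Feeding in the data attached to every invariant line of $\F$ should then, through Table~\ref{tab:CS(lambda)}, produce enough coincidences to fix all eigenvalue ratios and to bring $\omega$ to $\omega_{F}^{3}$ up to linear conjugacy.

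The hard part is twofold. First one must actually produce an invariant line of $\F$: a generic foliation of degree $3$ has no invariant algebraic curve, so here the flatness of $\Leg\F$ and the non-degeneracy of $\Sing\F$ have to be combined to force $\IinvF\neq0$; I expect this to proceed through the structure of $\Delta(\Leg\F)=\mathcal{G}_{\F}(\ItrF)\cup\check{\Sigma}_{\F}$ and the holomorphy of $K(\Leg\F)$ along its transverse part. Second, even granting invariant lines, the elimination of the ten non-\textsc{Fermat} models is delicate, because the \textsc{Camacho--Sad} polynomials are not pairwise distinct (rows $1,4,10$ all give $(\lambda-1)^{2}(\lambda+\tfrac{1}{2})^{2}$, rows $2,11$ give $(\lambda-\tfrac{1}{4})^{4}$, and rows $3,9$ give $(\lambda-1)^{3}(\lambda+2)$): no single invariant line decides the case, and one must use the whole invariant-line configuration together with the non-degeneracy of all $13$ singularities—especially of the nine collapsing to $O$—to discard every possibility but the one yielding $\F^{3}$.
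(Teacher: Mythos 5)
Your proposal correctly identifies the paper's overall strategy (degenerate $\F$ to homogeneous foliations inside $\overline{\mathcal{O}(\F)}$, then exploit Theorem~\ref{thm:Class-Homog3-Plat} and the \textsc{Camacho--Sad} data of Table~\ref{tab:CS(lambda)}), but it is an outline rather than a proof: the two steps you yourself flag as ``the hard part'' are precisely where all the content of the paper's argument lies, and you offer no argument for either. First, the existence of invariant lines is not obtained in the paper by analysing $\Delta(\Leg\F)=\mathcal{G}_{\F}(\ItrF)\cup\check{\Sigma}_{\F}$ as you anticipate, but by Lemme~\ref{lem:2-droite-invar}: through any non-degenerate singularity $m$ with $\mathrm{BB}(\F,m)\notin\{4,\tfrac{16}{3}\}$ pass \emph{exactly two} invariant lines. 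Its proof is the technical heart of the section: one normalises the $1$-form at $m$, then kills the coefficients $\alpha_0$ and $a_0$ by two separate degeneration arguments --- one computing explicitly the curvature $K(\psi^{*}\mathcal{W}_0)\not\equiv0$ of a limit $3$-web pulled back by a rational map $\psi$, the other invoking \cite[Corollaire~4.6]{BFM13} on a limit foliation $\theta_0=\lambda y\,\mathrm{d}x+\mu x\,\mathrm{d}y+x^3\mathrm{d}x$ --- and the hypotheses $\mathrm{BB}\neq4,\tfrac{16}{3}$ enter exactly to guarantee $\lambda\mu(\lambda+\mu)(\lambda+3\mu)(3\lambda+\mu)\neq0$. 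Nothing in your sketch produces this; moreover such points $m$ exist because $\Sigma^{2}\neq\emptyset$ follows from the count $\kappa_0+\kappa_1+\kappa_2=13$, $\tfrac{16}{3}\kappa_0+4\kappa_1+\sum_{\Sigma^2}\mathrm{BB}=25$, a reduction you do not make.

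Second, your endgame (``feeding in the data \dots should produce enough coincidences to fix all eigenvalue ratios'') does not resolve the degeneracy you correctly observe in Table~\ref{tab:CS(lambda)} (rows $1,4,10$, rows $2,11$, rows $3,9$ coincide), whereas the paper sidesteps it entirely: at a non-degenerate $m\in\Sigma^{2}$ the two separatrices give $\mathrm{CS}(\F,\ell_m^{(1)},m)\cdot\mathrm{CS}(\F,\ell_m^{(2)},m)=1$, and the \emph{only} pair of roots from Table~\ref{tab:CS(lambda)} with product $1$ and distinct from $1$ is $\{-2,-\tfrac12\}$, forcing $\mathrm{BB}(\F,m)=-\tfrac12$, hence $(\kappa_0,\kappa_1,\kappa_2)=(0,7,6)$. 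The proof then runs a combinatorial analysis of the resulting nine invariant lines (using $\sigma(\F,s)\leq\tau(\F,s)+1$, Remarque~\ref{rem:droite-inva}) to exhibit three \emph{non-aligned radial singularities of order} $2$, and concludes by Proposition~\ref{pro:Fermat-d}, which characterises the \textsc{Fermat} foliation by exactly this property --- a closing tool your proposal never mentions and without which the final identification ``bring $\omega$ to $\omega_F^3$ up to linear conjugacy'' remains unjustified. (A minor inaccuracy besides: in Proposition~\ref{pro:F-d�g�n�re-H} the $1$-jets of $\F$ and $\mathcal{H}$ at a point of $L$ do \emph{not} coincide --- they differ by a term $z\hspace{0.1mm}B_{d-1}(0,1)\mathrm{d}z$ --- but have equal eigenvalues, which is what transfers the \textsc{Camacho--Sad} indices; and the case $\pgcd(A_d,B_d)\neq1$ is not ``treated apart'' but excluded outright by the non-degeneracy hypothesis.)
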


\begin{rem}
L'ensemble $\mathbf{FP}(4)$ contient des feuilletages à singularités non-dégénérées et qui ne sont pas conjugués au feuilletage $\F^{4},$ {\it e.g.} la famille $(\F_{\lambda}^{4})_{\lambda\in\C}$ de feuilletages définis par
$$\omega_{F}^{4}+\lambda((x^{3}-1)y^{2}\mathrm{d}y-(y^{3}-1)x^{2}\mathrm{d}x).$$
En effet, d'après \cite[Théorème 8.1]{MP13}, pour tout $\lambda$ fixé dans $\mathbb{C}$, $\F_{\lambda}^{4}\in\mathbf{FP}(4)$; de plus un calcul facile montre que $\F_{\lambda}^{4}$ est à singularités non-dégénérées. Mais, si $\lambda$ est non nul alors $\F^{4}_{\lambda}$ n'est pas conjugué à $\F^{4}$ car il n'est pas convexe.
\end{rem}

\noindent La démonstration du Théorème~\ref{thm:Fermat} repose sur le Théorème~\ref{thm:Class-Homog3-Plat} de classification des feuilletages homogènes appartenant à $\mathbf{FP}(3)$, et sur les trois résultats qui suivent, dont les deux premiers sont valables en degré quelconque.
\smallskip

\noindent Notons d'abord que le feuilletage $\F^{d}$ possède trois singularités radiales d'ordre maximal $d-1$, non alignées. La proposition suivante montre que cette propriété caractérise l'orbite $\mathcal{O}(\F^{d})$.

\begin{pro}\label{pro:Fermat-d}
Soit $\F$ un feuilletage de degré $d$ sur $\pp$ ayant trois singularités radiales d'ordre maximal $d-1$, non alignées. Alors $\F$ est linéairement conjugué au feuilletage de \textsc{Fermat} $\F^{d}.$
\end{pro}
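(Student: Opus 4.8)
The plan is to normalize the three radial singularities to the vertices of the coordinate triangle and then read off the constraints that maximal radiality imposes on the homogeneous $1$-form defining $\F$. Since the three points are not aligned, some element of $\mathrm{PGL}_3(\mathbb{C})$ carries them to $[1:0:0]$, $[0:1:0]$ and $[0:0:1]$, so I may assume that $\F$ is given by a homogeneous $1$-form $\omega=a\,\mathrm{d}x+b\,\mathrm{d}y+c\,\mathrm{d}z$ with $a,b,c\in\mathbb{C}[x,y,z]_{d+1}$, $\pgcd(a,b,c)=1$ and $xa+yb+zc=0$. Writing $a=\sum_j a_j z^{d+1-j}$ with $a_j\in\mathbb{C}[x,y]_j$, and similarly for $b$ and $c$, the Euler relation reads $c_{j+1}=-(x a_j+y b_j)$ for every $j$, so that $c$ is entirely determined by $a$ and $b$.

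The heart of the argument is a local computation at the vertex $s=[0:0:1]$. In the chart $z=1$ the foliation is given by the vector field $\mathrm{X}=-b\,\px+a\,\py$, whose homogeneous component of degree $j$ satisfies $\det(\mathrm{X}_j,\mathrm{R})=-(x a_j+y b_j)=c_{j+1}$. Hence $\tau(\F,s)=\min\{\,j:\det(\mathrm{X}_j,\mathrm{R})\neq0\,\}$ equals $d$ exactly when $c_1=\dots=c_d=0$ and $c_{d+1}\neq0$, that is, when $c\in\mathbb{C}[x,y]_{d+1}\setminus\{0\}$; moreover $\nu(\F,s)=1$ forces the linear part $\mathrm{X}_1$ to be a \emph{nonzero} multiple of $\mathrm{R}$, i.e. $a_1\neq0$. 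By the symmetry of the situation under permutation of the coordinates (and correspondingly of $a,b,c$), the same analysis at $[1:0:0]$ and $[0:1:0]$ yields $a\in\mathbb{C}[y,z]_{d+1}$ and $b\in\mathbb{C}[x,z]_{d+1}$.

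Feeding these shape constraints back into $xa+yb+zc=0$ and comparing monomials kills all coefficients save a few, leaving the three-parameter family
\begin{align*}
a=\alpha_1\,yz^d+\alpha_d\,y^dz,\qquad b=-\alpha_1\,xz^d+\beta_d\,x^dz,\qquad c=-\alpha_d\,xy^d-\beta_d\,x^dy .
\end{align*}
The nondegeneracy $\nu=1$ at the three vertices (computed as above) forces $\alpha_1,\alpha_d,\beta_d$ to be all nonzero, and one checks directly that $\pgcd(a,b,c)=1$, so $\F$ is indeed a genuine foliation of degree $d$.

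Finally the diagonal torus $(x,y,z)\mapsto(\mu x,\nu y,\rho z)$, which stabilizes the coordinate triangle inside $\mathrm{PGL}_3(\mathbb{C})$, acts on $(\alpha_1,\alpha_d,\beta_d)$ by multiplication by $(\mu\nu\rho^{d},\,\mu\nu^{d}\rho,\,\mu^{d}\nu\rho)$. The integer matrix of exponents has determinant $-(d-1)^2(d+2)\neq0$ for $d\geq2$, so, $\mathbb{C}^*$ being divisible, this action is transitive on $(\mathbb{C}^*)^3$; I can therefore normalize to $(\alpha_1,\alpha_d,\beta_d)=(1,-1,1)$, which is precisely $a=yz(z^{d-1}-y^{d-1})$, $b=xz(x^{d-1}-z^{d-1})$, $c=xy(y^{d-1}-x^{d-1})$, the homogeneous form of the Fermat foliation $\F^{d}$. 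I expect the delicate point to be the local step — matching $\det(\mathrm{X}_j,\mathrm{R})$ with the Euler-determined $c_{j+1}$ and keeping the roles of $\nu=1$ and of $\tau=d$ separate — rather than the concluding normalization.
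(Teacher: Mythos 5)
Your proof is correct, and it takes a genuinely different route from the paper's. The paper starts by invoking \cite[Proposition~2, page~23]{Bru00}: two singularities with $\tau=d$ force the line joining them to be $\F$-invariant. After sending the three points to the coordinate vertices, this makes the sides of the triangle invariant, which is exactly what lets the authors write, in the affine chart $z=1$, the normal form $\omega=(x\mathrm{d}y-y\mathrm{d}x)(\gamma+C_{1}+\cdots+C_{d-2})+A_{d}\mathrm{d}x+B_{d}\mathrm{d}y$ (invariance of the line at infinity kills the degree-$(d+1)$ part); they then pin down $A_d$, $B_d$ and the $C_k$ by writing out $\nu=1$, $\tau=d$ in the two further charts $y=1$ and $x=1$. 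You never need the invariant-line input: working with the homogeneous $1$-form and the Euler relation, you convert \og radial of maximal order at a vertex\fg{} into \og the corresponding coefficient of $\omega$ does not involve the corresponding variable\fg{} (e.g. $c\in\mathbb{C}[x,y]_{d+1}$ at $[0:0:1]$), and a single monomial comparison in $xa+yb+zc=0$ yields the three-parameter family; the invariance of the triangle comes out as a consequence rather than being used as an input. Both arguments end with the same diagonal normalization (the paper via explicit $(d-1)$-th roots, you via the weight matrix with determinant $-(d-1)^2(d+2)\neq0$ and divisibility of $\mathbb{C}^*$). Your version is more symmetric and self-contained, trading the citation and three chart computations for one piece of homogeneous bookkeeping; the paper's makes the geometric picture (the invariant triangle) visible from the outset. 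One point worth making explicit in your local step: computing $\nu$ and $\tau$ through $\mathrm{X}=-b(x,y,1)\px+a(x,y,1)\py$ presupposes that this field is saturated at the vertex; this does hold, because a common local factor of $a(x,y,1)$ and $b(x,y,1)$ through the vertex would, by the Euler relation, also divide $c(x,y,1)$ and force $\Sing\F$ to contain a curve, contradicting $\pgcd(a,b,c)=1$.
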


\begin{proof}
Par hypothèse $\F$ possède trois points singuliers $m_j,j=1,2,3,$ non alignés vérifiant $\nu(\F,m_j)=1$ et $\tau(\F,m_j)=d$. D'après \cite[Proposition~2, page~23]{Bru00}, les égalités $\tau(\F,m_j)=\tau(\F,m_l)=d$ avec $l\neq j$ impliquent que la droite $(m_jm_l)$ est invariante par $\F$. Choisissons des coordonnées homogènes $[x:y:z]\in\pp$ telles que $m_1=[0:0:1],\,m_2=[0:1:0]$\, et \,$m_3=[1:0:0]$. Les égalités $\nu(\F,m_1)=1$ et $\tau(\F,m_1)=d$, combinées avec le fait que $(m_2m_3)=(z=0)$ est $\F$-invariante, assurent que toute $1$-forme $\omega$ décrivant $\F$ dans la carte affine $z=1$ est du type
\begin{align*}
\omega=(x\mathrm{d}y-y\mathrm{d}x)(\gamma+C_{1}(x,y)+\cdots +C_{d-2}(x,y))+A_{d}(x,y)\mathrm{d}x+B_{d}(x,y)\mathrm{d}y
\end{align*}
avec $\gamma\neq 0,\hspace{3mm}A_{d},B_{d}\in\mathbb{C}[x,y]_d,\hspace{3mm}C_{k}\in\mathbb{C}[x,y]_k$ \hspace{1mm}pour $k=1,\ldots,d-2.$
\vspace{1mm}

\noindent Dans la carte affine $y=1$ le feuilletage $\F$ est donné par
\begin{align*}
\theta=-(\gamma\hspace{0.1mm}z^{d}+C_{1}(x,1)z^{d-1}+\cdots +C_{d-2}(x,1)z^{2})\mathrm{d}x+A_{d}(x,1)(z\mathrm{d}x-x\mathrm{d}z)-B_{d}(x,1)\mathrm{d}z\hspace{1mm};
\end{align*}
nous avons $\theta\wedge(z\mathrm{d}x-x\mathrm{d}z)=z\hspace{0.1mm}Q(x,z)\mathrm{d}x\wedge\mathrm{d}z$, avec
$$Q(x,z)=x\left[\gamma\hspace{0.1mm}z^{d-1}+C_{1}(x,1)z^{d-2}+\cdots+C_{d-2}(x,1)z\right]+B_{d}(x,1).$$
L'égalité $\tau(\F,m_2)=d$ entraîne alors que le polynôme $Q\in\mathbb{C}[x,z]$ est homogène de degré $d$, ce qui permet d'écrire $B_d(x,y)=\beta\hspace{0.1mm}x^d$\, et \,$C_k(x,y)=\delta_{k}x^k,$\, $\beta,\delta_k\in\mathbb{C}.$ Par suite nous avons $J^{1}_{(0,0)}\theta=A_{d}(0,1)(z\mathrm{d}x-x\mathrm{d}z)$; alors l'égalité $\nu(\F,m_2)=1$ assure que $A_{d}(0,1)\neq0.$
\smallskip

\noindent De la même manière, en se plaçant dans la carte affine $x=1$ et en écrivant explicitement les égalités $\tau(\F,m_3)=d$\, et \,$\nu(\F,m_3)=1$, nous obtenons que $B_{d}(1,0)\neq0,$\, $A_d(x,y)=\alpha\hspace{0.1mm}y^d$\, et \,$C_k(x,y)=\varepsilon_{k}y^k,$\, $\alpha,\varepsilon_k\in\mathbb{C}.$ Donc $\alpha\beta\neq0,$\, les $C_k$ sont tous nuls et \,$\omega$\, est du type $$\omega=\gamma(x\mathrm{d}y-y\mathrm{d}x)+\alpha y^{d}\mathrm{d}x+\beta x^{d}\mathrm{d}y.$$
Écrivons $\alpha=\gamma\mu^{1-d}$\, et \,$\beta=-\gamma\hspace{0.1mm}\lambda^{\hspace{-0.1mm}1-d}.$ Quitte à remplacer $\omega$ par $\varphi^*\omega,$ où $\varphi(x,y)=(\lambda\hspace{0.1mm}x,\mu\hspace{0.1mm}y),$ le feuilletage $\F$ est défini, dans les coordonnées affines $(x,y),$ par la $1$-forme
$$\omega_{F}^{d}=(x^{d}-x)\mathrm{d}y-(y^{d}-y)\mathrm{d}x.$$
\end{proof}

\noindent La proposition suivante permet de ramener l'étude de la platitude au cadre homogène:

\begin{pro}\label{pro:F-dégénère-H}
Soit $\F$ un feuilletage de degré $d\geq1$ sur $\pp$ ayant une droite invariante $L.$ Supposons que toutes les singularités de $\F$ sur $L$ soient non-dégénérées. Il existe un feuilletage homogène $\mathcal{H}$ de degré $d$ sur $\pp$ ayant les propriétés suivantes
\vspace{1mm}
\begin{itemize}
\item [$\bullet$] $\mathcal{H}\in\overline{\mathcal{O}(\F)}$;
\vspace{0.5mm}
\item [$\bullet$] $L$ est invariante par $\mathcal{H}$;
\vspace{0.5mm}
\item [$\bullet$] $\Sing\mathcal{H}\cap L=\Sing\F\cap L$;
\vspace{0.5mm}
\item [$\bullet$] $\forall\hspace{1mm}s\in\Sing\mathcal{H}\cap L,\hspace{1mm}
                   \mu(\mathcal{H},s)=1
                   \hspace{2mm}\text{et}\hspace{2mm}
                   \mathrm{CS}(\mathcal{H},L,s)=\mathrm{CS}(\F,L,s)$.
\end{itemize}
\vspace{1mm}
Si de plus $\Leg\F$ est plat, alors $\Leg\mathcal{H}$ l'est aussi.
\end{pro}

\begin{proof}
Choisissons des coordonnées homogènes $[x:y:z]\in\pp$ telles que $L=(z=0)$; comme $L$ est $\F$-invariante, $\F$ est défini dans la carte affine $z=1$ par une $1$-forme $\omega$ du type
$$\omega=\sum_{i=0}^{d}(A_i(x,y)\mathrm{d}x+B_i(x,y)\mathrm{d}y),$$
où les $A_i,\,B_i$ sont des polynômes homogènes de degré $i$.

\noindent Montrons par l'absurde que $\mathrm{pgcd}(A_d,B_d)=1$; supposons donc que $\mathrm{pgcd}(A_d,B_d)\neq1.$ Quitte à conjuguer $\omega$ par une transformation linéaire de $\mathbb{C}^2=(z=1)$, nous pouvons nous ramener à
$$A_{d}(x,y)=x\hspace{0.3mm}\widetilde{A}_{d-1}(x,y) \qquad\text{et}\qquad B_{d}(x,y)=x\widetilde{B}_{d-1}(x,y)$$
pour certains $\widetilde{A}_{d-1},\,\widetilde{B}_{d-1}$ dans $\mathbb{C}[x,y]_{d-1}$; alors $s_{0}=[0:1:0]\in L$ est un point singulier de $\F$. Dans la carte affine $y=1$, le feuilletage $\F$ est donné par
\begin{eqnarray*}
\hspace{3.6cm}\theta\hspace{-1mm}&=&\hspace{-1mm}\sum_{i=0}^{d}z^{d-i}[A_i(x,1)(z\mathrm{d}x-x\mathrm{d}z)-B_i(x,1)\mathrm{d}z]
\\
\hspace{3.6cm}\hspace{-1mm}&=&\hspace{-1mm}[A_{d}(x,1)+z\hspace{0.3mm}A_{d-1}(x,1)+\cdots](z\mathrm{d}x-x\mathrm{d}z)-
                [B_{d}(x,1)+zB_{d-1}(x,1)+\cdots]\mathrm{d}z.
\end{eqnarray*}
Le $1$-jet de $\theta$ au point singulier $s_{0}=(0,0)$ s'écrit $-[\widetilde{B}_{d-1}(0,1)x+B_{d-1}(0,1)z]\mathrm{d}z$; ce qui implique que $\mu(\F,s_{0})>1$: contradiction avec l'hypothèse que toute singularité de $\F$ située sur $L$ est non-dégénérée.

\noindent Il s'en suit que la $1$-forme $\omega_d=A_d(x,y)\mathrm{d}x+B_d(x,y)\mathrm{d}y$ définit bien un feuilletage homogène de degré $d$ sur $\pp$, que nous notons $\mathcal{H}$. Il est évident que $L$ est $\mathcal{H}$-invariante et que  $\Sing\mathcal{F}\cap L=\Sing\mathcal{H}\cap L$. Considérons la famille d'homothéties $\varphi=\varphi_{\varepsilon}=(\frac{x}{\varepsilon},\frac{y}{\varepsilon}).$ Nous avons $$\varepsilon^{d+1}\varphi^*\omega=\sum_{i=0}^{d}(\varepsilon^{d-i}A_i(x,y)\mathrm{d}x+\varepsilon^{d-i}B_i(x,y)\mathrm{d}y)$$
qui tend vers $\omega_d$ lorsque $\varepsilon$ tend vers $0$; il en résulte que $\mathcal{H}\in\overline{\mathcal{O}(\F)}.$
\smallskip

\noindent Montrons que $\mathcal{H}$ vérifie la quatrième propriété de l'énoncé. Soit $s\in\Sing\mathcal{H}\cap L$. Quitte à conjuguer $\omega$ par un isomorphisme linéaire de $\mathbb{C}^2=(z=0)$, nous pouvons supposer que $s=[0:1:0]$; il existe donc un polynôme $\widehat{B}_{d-1}\in\mathbb{C}[x,y]_{d-1}$ tel que $B_{d}(x,y)=x\widehat{B}_{d-1}(x,y)$. Le feuilletage $\mathcal{H}$ est décrit dans la carte affine $y=1$ par $$\theta_d=A_d(x,1)(z\mathrm{d}x-x\mathrm{d}z)-B_d(x,1)\mathrm{d}z.$$
\noindent Posons $\lambda=A_{d}(0,1)$\, et \,$\nu=A_{d}(0,1)+\widehat{B}_{d-1}(0,1)$. Le $1$-jet de $\theta_d$ en $s=(0,0)$ s'écrit $J^{1}_{(0,0)}\theta_{d}=\lambda\hspace{0.1mm}z\mathrm{d}x-\nu\hspace{0.1mm}x\mathrm{d}z$, et celui de $\theta$ est donné par $J^{1}_{(0,0)}\theta=\lambda\hspace{0.1mm}z\mathrm{d}x-\nu\hspace{0.1mm}x\mathrm{d}z-z\hspace{0.1mm}B_{d-1}(0,1)\mathrm{d}z$. L'hypothèse $\mu(\F,s)=1$ signifie que $\lambda\nu$ est non nul. Par suite $\mu(\mathcal{H},s)=1$\, et \,$\mathrm{CS}(\mathcal{H},L,s)=\mathrm{CS}(\F,L,s)=\frac{\lambda}{\nu}$.
\smallskip

\noindent L'implication\, $K(\Leg\F)\equiv0\hspace{0.3mm}\Longrightarrow\hspace{0.3mm}K(\Leg\mathcal{H})\equiv0$\, découle du fait que $\mathcal{H}\in\overline{\mathcal{O}(\F)}.$
\end{proof}

\noindent Nous illustrons le résultat précédent en l'appliquant au feuilletage $\F^{d}$.

\begin{eg}
Le feuilletage de \textsc{Fermat} $\F^{d}$ est donné en coordonnées homogènes par la $1$-forme
\[x^{d}(y\mathrm{d}z-z\mathrm{d}y)+y^{d}(z\mathrm{d}x-x\mathrm{d}z)+z^{d}(x\mathrm{d}y-y\mathrm{d}x).\] Il possède les $3d$ droites invariantes suivantes :
\begin{enumerate}
\item [(a)] $x=0$,\, $y=0$,\, $z=0$;
\item [(b)] $y=\zeta x$,\, $y=\zeta z$,\, $x=\zeta z$\,  avec $\zeta^{d-1}=1$.
\end{enumerate}
Les droites de la famille (a) (resp. (b)) donnent lieu à $3$ (resp. $3d-3$) feuilletages homogènes appartenant à $\overline{\mathcal{O}(\F^{d})}\subset\mathbf{FP}(d)$ et de type $2\cdot\mathrm{R}_{d-1}$ (resp. $1\cdot\mathrm{R}_{d-1} + (d-1)\cdot\mathrm{R}_{1}$). Ceux qui sont de type $2\cdot\mathrm{R}_{d-1}$ sont tous conjugués à $\mathcal{H}^{d}_{1}$, d'après la Proposition~\ref{pro:omega1-omega2}, et ceux qui sont de type $1\cdot\mathrm{R}_{d-1}+(d-1)\cdot\mathrm{R}_{1}$ sont tous conjugués au feuilletage défini par
$$
(y^{d-1}-dx^{d-1})y\mathrm{d}x+(d-1)x^{d}\mathrm{d}y.
$$
Pour $d=3$ ce dernier feuilletage est conjugué au feuilletage $\mathcal{H}_{3}^{d,1}$ donné par la Proposition~\ref{pro:omega3-omega4}, mais ce n'est plus le cas pour $d\ge4$.
\end{eg}

\begin{rem}\label{rem:droite-inva}
Si $\F$ est un feuilletage de degré $d$ sur $\pp$ et si $m$ est un point singulier de $\F$, nous avons l'encadrement $\sigma(\F,m)\leq\tau(\F,m)+1\leq d+1,$ où $\sigma(\F,m)$ désigne le nombre de droites (distinctes) invariantes par $\F$ et qui passent par $m.$
\end{rem}

\noindent Le lemme technique suivant joue un rôle clé dans la démonstration du Théorème~\ref{thm:Fermat}.

\begin{lem}\label{lem:2-droite-invar}
Soit $\F$ un feuilletage de degré $3$ sur $\pp.$ Si le $3$-tissu $\Leg\F$ est plat et si $\F$ possède une singularité $m$ non-dégénérée vérifiant $\mathrm{BB}(\F,m)\not\in\{4,\frac{16}{3}\}$, alors par le point $m$ il passe exactement deux droites invariantes par $\F$, {\it i.e.} $\sigma(\F,m)=2.$
\end{lem}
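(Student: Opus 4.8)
The plan is to prove the two inequalities $\sigma(\F,m)\le 2$ and $\sigma(\F,m)\ge 2$ separately, the first being elementary and local at $m$, the second requiring a curvature computation on $\Leg\F$ in the spirit of Theorems~\ref{thm:Barycentre} and \ref{thm:Divergence}.

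\smallskip

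\noindent\textbf{Upper bound.} Since $m$ is non-degenerate, $J^1_m\mathrm{X}$ has two nonzero eigenvalues $\lambda,\mu$ and $\mathrm{BB}(\F,m)=\kappa+\kappa^{-1}+2$ with $\kappa=\lambda/\mu$. The hypothesis $\mathrm{BB}(\F,m)\neq 4$ reads $\kappa+\kappa^{-1}\neq 2$, i.e. $\kappa\neq 1$, i.e. $\lambda\neq\mu$. In eigencoordinates $J^1_m\mathrm{X}=\lambda\,\mathrm{u}\frac{\partial}{\partial\mathrm{u}}+\mu\,\mathrm{v}\frac{\partial}{\partial\mathrm{v}}$, whence $\det(J^1_m\mathrm{X},\mathrm{R}_m)=(\lambda-\mu)\,\mathrm{u}\,\mathrm{v}\not\equiv 0$ and therefore $\tau(\F,m)=1$. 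Remark~\ref{rem:droite-inva} then yields $\sigma(\F,m)\le\tau(\F,m)+1=2$. Moreover, as $m$ is non-degenerate with $\lambda\neq\mu$, any $\F$-invariant line through $m$ is tangent at $m$ to an eigendirection, so it must be one of the two lines $\ell_1,\ell_2$ through $m$ supporting the two eigendirections; consequently $\sigma(\F,m)=\#\{i:\ell_i\text{ is }\F\text{-invariant}\}$, and it remains to show that \emph{both} $\ell_1$ and $\ell_2$ are invariant.

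\smallskip

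\noindent\textbf{Lower bound.} Suppose not; by symmetry assume $\ell_1$ is not invariant. Choose affine coordinates with $m=(0,0)$ and $\ell_1=\{y=0\}$, and write $\F$ locally by $\omega=A\,\mathrm{d}x+B\,\mathrm{d}y$ with $A=-\mu y+\cdots$, $B=\lambda x+\cdots$. Invariance of $\ell_1$ amounts to $A(x,0)\equiv 0$; since we assume the contrary, let $a\,x^{k}$ with $k\ge 2$, $a\neq 0$, be the lowest-order term of $A(x,0)$. In the dual chart $(p,q)$ associated to $\{y=px-q\}$, the line $\check m=\{q=0\}$ is a $\Leg\F$-invariant line, and the Gauss image $\G_{\F}(\ell_1)$ is a branch of $\Leg\F$ through $\check\ell_1=(0,0)$ with $q\sim -\tfrac{a}{\lambda}\bigl(-\tfrac{\lambda}{a}p\bigr)^{k/(k-1)}$, hence tangent to $\check m$ at $\check\ell_1$; by \eqref{equa:Delta-LegF} and $\tau(\F,m)=1$ this forces $\check\ell_1\in\G_{\F}(\ItrF)\subset\Delta(\Leg\F)$.

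\smallskip

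\noindent\textbf{The key computation.} Near a generic point of $\check m$ close to $\check\ell_1$, decompose $\Leg\F=\W'\boxtimes\W''$ with $\W'$ the irreducible subweb carrying $\check m$ and $\W''$ the complementary regular factor, expand the three slopes $x_i(p,q)$ as Puiseux series in $p$, and compute $\eta(\Leg\F)$ and $K(\Leg\F)=\mathrm{d}\,\eta(\Leg\F)$ exactly as in the proofs of Theorems~\ref{thm:Barycentre} and \ref{thm:Divergence}. Flatness of $\Leg\F$ forces $K(\Leg\F)$ to be holomorphic at $\check\ell_1$. The plan is to show that the polar coefficient of $K(\Leg\F)$ along $\check m$ at $\check\ell_1$ equals $a\cdot R(\kappa)$ for an explicit polynomial $R$ attached to the degree $3$; its vanishing forces $a=0$ (and then, iterating on the successive coefficients of $A(x,0)$, that $\ell_1$ is invariant after all, a contradiction), \emph{unless} $\kappa$ is a root of $R$. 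The heart of the matter is to verify that for $d=3$ these roots are precisely $\kappa\in\{3,\tfrac13\}$, equivalently $\mathrm{BB}(\F,m)=\tfrac{16}{3}$, which is excluded by hypothesis; the treatment of $\ell_2$ is identical with $\kappa$ replaced by $\kappa^{-1}$, yielding the same value $\tfrac{16}{3}$. Therefore both $\ell_1$ and $\ell_2$ are $\F$-invariant, distinct and transverse, so $\sigma(\F,m)=2$. The main obstacle is this last explicit curvature computation and the identification of the resonant roots of $R$ as exactly $\{3,\tfrac13\}$, which is what pins down the excluded value $\tfrac{16}{3}$.
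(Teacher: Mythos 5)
Your upper bound is correct and coincides with the paper's closing step: $\mathrm{BB}(\F,m)\neq 4$ gives two distinct eigenvalues, hence $\tau(\F,m)=1$, and Remark~\ref{rem:droite-inva} yields $\sigma(\F,m)\leq 2$. The genuine gap is in the lower bound, which is the actual content of the lemma: you never perform the \og key computation\fg, you only announce what you hope it yields. Asserting that the polar coefficient equals $a\cdot R(\kappa)$ for some polynomial $R$ whose roots are \emph{precisely} $\kappa\in\{3,\tfrac13\}$ is assuming exactly what has to be proved; this is the only place where the hypotheses $K(\Leg\F)\equiv 0$ and $\mathrm{BB}(\F,m)\neq\tfrac{16}{3}$ enter, and it is left blank. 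The framing of that computation is also problematic: since $\tau(\F,m)=1$, the line $\check{m}$ is not contained in $\check{\Sigma}_{\mathcal{F}}$, so $K(\Leg\F)$ has no pole \og along $\check{m}$\fg\ at all --- the pole near $\check{\ell}_1$ is along the branch of $\G_{\F}(\ItrF)$, and $\check{\ell}_1$ is a \emph{non-generic} point of that component (it is where the $\Leg\F$-invariant line $\check{m}$ meets the discriminant). The criteria of Théorèmes~\ref{thm:Barycentre} and \ref{thm:Divergence}, and the barycentre criterion of \cite{MP13}, are established at generic points of the discriminant and do not apply there as stated, so even the setup of your local analysis would need justification.

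Moreover, the resonance structure you guess is partially wrong, as the paper's own proof shows. In adapted coordinates one has $A(x,0)=\alpha_0x^2+a_0x^3$, and the paper kills the two coefficients by two \emph{different} degeneration arguments, exploiting that flatness is a Zariski-closed condition and hence passes to orbit closures $\overline{\mathcal{O}(\F)}$ and $\overline{\mathcal{O}(\Leg\F)}$. For $\alpha_0$: a limit of the dual web under $(p,q)\mapsto(\alpha_0\varepsilon^{-1}p,\alpha_0\varepsilon^{-2}q)$ produces an explicit web $\W_0$ whose pullback $\psi^*\W_0=\mathcal{F}_1\boxtimes\mathcal{F}_2\boxtimes\mathcal{F}_3$ has curvature $-\dfrac{4\mu(p+q)}{(\lambda+\mu)(p-q)^3}\,\mathrm{d}p\wedge\mathrm{d}q\not\equiv0$; the obstruction here is $\mu(\lambda+\mu)\neq0$, i.e. $\mathrm{BB}\neq4$, \emph{not} $\mathrm{BB}\neq\tfrac{16}{3}$. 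For $a_0$: the limit foliation $\theta_0=\lambda y\,\mathrm{d}x+\mu x\,\mathrm{d}y+x^3\mathrm{d}x$ together with the criterion of \cite[Corollaire~4.6]{BFM13} gives the obstruction $\mu(3\lambda+\mu)\neq0$, which is one factor of $\mathrm{BB}\neq\tfrac{16}{3}$; the second eigendirection line uses the other factor $\lambda+3\mu$. So there is no single polynomial $R$ with root set $\{3,\tfrac13\}$: the order-two resonance sits at $\mathrm{BB}=4$ and only the order-three resonances sit at $\mathrm{BB}=\tfrac{16}{3}$, one per invariant direction. Your plan would have to be restructured accordingly, and the computation it defers is precisely the heart of the lemma.
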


\begin{proof} Les deux conditions $\mu(\F,m)=1$ et $\mathrm{BB}(\F,m)\neq4$ assurent l'existence d'une carte affine $(x,y)$ de $\pp$ dans laquelle
$m=(0,0)$ et $\F$ est défini par une $1$-forme du type $\theta_1+\theta_2+\theta_3+\theta_4,$ où
\begin{align*}
\hspace{6mm}&  \theta_1=\lambda\hspace{0.1mm}y\mathrm{d}x+\mu\hspace{0.1mm}x\mathrm{d}y,&&
               \theta_2=\left(\sum_{i=0}^{2}\alpha_{i}\hspace{0.1mm}x^{2-i}y^{i}\right)\mathrm{d}x+
                        \left(\sum_{i=0}^{2}\beta_{i}\hspace{0.1mm}x^{2-i}y^{i}\right)\mathrm{d}y,\\
\hspace{6mm}&  \theta_3=\left(\sum_{i=0}^{3}a_{i}\hspace{0.1mm}x^{3-i}y^{i}\right)\mathrm{d}x+
                        \left(\sum_{i=0}^{3}b_{i}\hspace{0.1mm}x^{3-i}y^{i}\right)\mathrm{d}y,&&
               \theta_4=\left(\sum_{i=0}^{3}c_{i}\hspace{0.1mm}x^{3-i}y^{i}\right)(x\mathrm{d}y-y\mathrm{d}x),
\end{align*}
avec $\lambda\mu(\lambda+\mu)\neq0$; comme $\mathrm{BB}(\F,m)\neq\frac{16}{3}$ nous avons $\lambda\mu(\lambda+\mu)(\lambda+3\mu)(3\lambda+\mu)\neq0.$
\medskip

\noindent Commençons par montrer que $\alpha_0=0$. Supposons par l'absurde que $\alpha_0\neq0$. Soit $(p,q)$ la carte affine de $\pd$ associée à la droite $\{px-qy=1\}\subset{\mathbb{P}^{2}_{\mathbb{C}}}$; le $3$-tissu $\mathrm{Leg}\mathcal{F}$ est donné par la $3$-forme symétrique
\begin{align*}
&\check{\omega}=\left[\left(\beta_2\hspace{0.1mm}p+\alpha_2\hspace{0.1mm}q-\lambda\hspace{0.1mm}q^2\right)\mathrm{d}p^{2}+
                \left(\beta_1\hspace{0.1mm}p+\alpha_1\hspace{0.1mm}q+\lambda\hspace{0.1mm}pq-\mu\hspace{0.1mm}pq\right)\mathrm{d}p\mathrm{d}q+
                \left(\beta_0\hspace{0.1mm}p+\alpha_0\hspace{0.1mm}q+\mu\hspace{0.1mm}p^2\right)\mathrm{d}q^{2}\right](p\mathrm{d}q-q\mathrm{d}p)\\
&\hspace{0.7cm}+q\left(a_3\mathrm{d}p^{3}+a_2\mathrm{d}p^{2}\mathrm{d}q+a_1\mathrm{d}p\mathrm{d}q^{2}+a_0\mathrm{d}q^{3}\right)+
               p\left(b_3\mathrm{d}p^{3}+b_2\mathrm{d}p^{2}\mathrm{d}q+b_1\mathrm{d}p\mathrm{d}q^{2}+b_0\mathrm{d}q^{3}\right)\\
&\hspace{0.7cm}+c_3\mathrm{d}p^{3}+c_2\mathrm{d}p^{2}\mathrm{d}q+c_1\mathrm{d}p\mathrm{d}q^{2}+c_0\mathrm{d}q^{3}.
\end{align*}
Considérons la famille d'automorphismes $\varphi=\varphi_{\varepsilon}=(\alpha_0\hspace{0.1mm}\varepsilon^{-1}\hspace{0.1mm}p,\hspace{0.1mm}\alpha_0\hspace{0.1mm}\varepsilon^{-2}\hspace{0.1mm}q).$ Nous constatons que
\begin{align*}
&\check{\omega}_{0}:
=\lim_{\varepsilon\to 0}\varepsilon^9\alpha_{0}^{-6}\varphi^*\check{\omega}
=(p\mathrm{d}q-q\mathrm{d}p)\left(-\lambda\hspace{0.1mm}q^2\mathrm{d}p^2+pq(\lambda-\mu)\mathrm{d}p\mathrm{d}q+(\mu\hspace{0.1mm}p^2+q)\mathrm{d}q^{2}\right).
\end{align*}

\noindent Puisque $\mu$ est non nul $\check{\omega}_{0}$ définit un $3$-tissu $\mathcal{W}_{0}$, qui appartient évidemment à $\overline{\mathcal{O}(\Leg\F)}$. L'image réciproque de $\W_0$ par l'application rationnelle $\psi(p,q)=\left(\lambda(p+q),-\lambda(\lambda+\mu)^2pq\right)$ s'écrit $\psi^*\mathcal{W}_{0}=\mathcal{F}_{1}\boxtimes\mathcal{F}_{2}\boxtimes\mathcal{F}_{3}$, où
\begin{align*}
&\F_1\hspace{0.1mm}:\hspace{0.1mm}q^2\mathrm{d}p+p^2\mathrm{d}q=0,&&
\F_2\hspace{0.1mm}:\hspace{0.1mm}\mu q^2\mathrm{d}p+p(\lambda q+\mu q-\lambda p)\mathrm{d}q=0,&&
\F_3\hspace{0.1mm}:\hspace{0.1mm}\mu p^2\mathrm{d}q+q(\lambda p+\mu p-\lambda q)\mathrm{d}p=0.
\end{align*}
\noindent Un calcul direct, utilisant la formule (\ref{equa:eta-rst}), conduit à
\begin{small}
\begin{align*}
\eta(\psi^{*}\mathcal{W}_0)=
\frac{5(\lambda+\mu)p^2-(8\lambda+7\mu)pq+(3\lambda+4\mu)q^2}{(\lambda+\mu)p(p-q)^2}\mathrm{d}p+
\frac{5(\lambda+\mu)q^2-(8\lambda+7\mu)pq+(3\lambda+4\mu)p^2}{(\lambda+\mu)q(p-q)^2}\mathrm{d}q
\end{align*}
\end{small}
\noindent de sorte que
$$
K(\psi^{*}\mathcal{W}_0)=\mathrm{d}\eta(\psi^{*}\mathcal{W}_0)
=-\frac{4\mu(p+q)}{(\lambda+\mu)(p-q)^3}\mathrm{d}p\wedge\mathrm{d}q\not\equiv0\hspace{1mm};
$$
\noindent comme $\Leg\F$ est plat par hypothèse, il en est de même pour $\W_0$; par suite $K(\psi^{*}\mathcal{W}_0)=\psi^{*}K(\mathcal{W}_0)=0,$ ce qui est absurde. D'où l'égalité $\alpha_0=0.$
\medskip

\noindent Montrons maintenant que $a_0=0$. Raisonnons encore par l'absurde en supposant $a_0\neq0$. Le feuilletage $\F$ est décrit dans la carte affine $(x,y)$ par $\theta=\theta_1+\theta_2+\theta_3+\theta_4$\hspace{0.1mm} avec $\alpha_0=0.$ En faisant agir la transformation linéaire diagonale  $(\varepsilon\hspace{0,1mm}x\hspace{0,1mm},\hspace{0,1mm}a_0\hspace{0,1mm}\varepsilon^{3}y)$ sur $\theta$ puis en passant à la limite lorsque $\varepsilon\to0$ nous obtenons
$$\theta_0=\lambda\hspace{0.1mm}y\mathrm{d}x+\mu\hspace{0.1mm}x\mathrm{d}y+x^3\mathrm{d}x$$
qui définit un feuilletage de degré trois $\F_0\in\overline{O(\F)}.$ Notons $\mathrm{I}_{0}=\mathrm{I}_{\mathcal{F}_{0}}^{\hspace{0.2mm}\mathrm{tr}}$, $\mathcal{G}_{0}=\mathcal{G}_{\mathcal{F}_0}$ et $\mathrm{I}_{0}^{\perp}=\overline{\mathcal{G}_{0}^{-1}(\mathcal{G}_{0}(\mathrm{I}_{0}))\setminus\mathrm{I}_{0}},$ où l'adhérence est prise au sens ordinaire. Un calcul élémentaire montre que
\begin{align*}
&\mathcal{G}_{0}(x,y)=\left(\dfrac{x^3+\lambda\hspace{0.1mm}y}{x(x^3+\lambda\hspace{0.1mm}y+\mu\hspace{0.1mm}y)},
-\dfrac{\mu}{x^3+\lambda\hspace{0.1mm}y+\mu\hspace{0.1mm}y}\right),&\quad
\mathrm{I}_{0}=\{(x,y)\in\mathbb{C}^2\hspace{1mm}\colon(\lambda-2\mu)x^3+\lambda(\lambda+\mu)y=0\}\subset\pp
\end{align*}
et que la courbe $\mathrm{I}_{0}^{\perp}$ a pour équation affine $f(x,y)=y-\nu\hspace{0.1mm}x^3=0$, où $\nu=-\dfrac{4\lambda+\mu}{4\lambda(\lambda+\mu)}.$ Comme $\Leg\F$ est plat, $\Leg\F_0$ l'est aussi. Or, d'après \cite[Corollaire~4.6]{BFM13}, le $3$-tissu $\Leg\F_0$ est plat si et seulement si $\mathrm{I}_{0}^{\perp}$ est invariante par $\F_0,$ {\it i.e.} si et seulement si
$$0\equiv\mathrm{d}f\wedge\theta_0\Big|_{y=\nu x^3}=3(3\lambda+\mu)\mu\hspace{0.2mm}x^3\mathrm{d}x\wedge\mathrm{d}y\hspace{1mm};$$
d'où $\mu(3\lambda+\mu)=0$: contradiction. Donc $a_0=\alpha_0=0$, ce qui signifie que la droite $(y=0)$ est $\F$-invariante.
\medskip

\noindent Ce qui précède montre également que l'invariance de la droite $(y=0)$ par $\F$ découle uniquement du fait que $\lambda\mu(\lambda+\mu)(3\lambda+\mu)\neq0$ et de l'hypothèse que $\Leg\F$ est plat. Ainsi en permutant les coordonnées $x$ et $y$, la condition $\lambda\mu(\lambda+\mu)(\lambda+3\mu)\neq0$ permet de déduire que $\beta_2=b_3=0$, {\it i.e.} que la droite $(x=0)$ est aussi invariante par $\F$.
\smallskip

\noindent La singularité $m$ de $\F$ n'est pas radiale car $\mathrm{BB}(\F,m)\neq4$; de plus $\nu(\F,m)=1$ car $\mu(\F,m)=1$. Il s'en suit que $\tau(\F,m)=1$; d'après la Remarque \ref{rem:droite-inva}, nous avons $\sigma(\F,m)\leq\tau(\F,m)+1=2,$ d'où l'énoncé.
\end{proof}

\noindent Avant de commencer la démonstration du Théorème~\ref{thm:Fermat}, rappelons (\emph{voir} \cite{Bru00}) que si $\F$ est un feuilletage de degré $d$ sur $\pp$ alors
\begin{align}\label{equa:Darboux-BB}
&\sum_{s\in\mathrm{Sing}\F}\mu(\F,s)=d^2+d+1
&&\text{et}&&
\sum_{s\in\mathrm{Sing}\F}\mathrm{BB}(\F,s)=(d+2)^2.
\end{align}

\begin{proof}[\sl D\'emonstration du Théorème~\ref{thm:Fermat}]
\'{E}crivons $\Sing\F=\Sigma^{0}\cup\Sigma^{1}\cup\Sigma^{2}$ avec
\begin{align*}
&\Sigma^{0}=\{s\in\Sing\F\hspace{1mm}\colon \mathrm{BB}(\F,s)=\tfrac{16}{3}\},&&
\Sigma^{1}=\{s\in\Sing\F\hspace{1mm}\colon \mathrm{BB}(\F,s)=4\},&&
\Sigma^{2}=\Sing\F\setminus(\Sigma^{0}\cup\Sigma^{1})
\end{align*}
et notons $\kappa_i=\#\hspace{0.5mm}\Sigma^{i},\,i=0,1,2.$ Par hypothèse, $\F$ est de degré $3$ et toutes ses singularités ont leur nombre de \textsc{Milnor} $1$. Les formules (\ref{equa:Darboux-BB}) impliquent alors que
\begin{align}\label{equa:Dar-BB-3}
&\#\hspace{0.5mm}\Sing\F=\kappa_0+\kappa_1+\kappa_2=13 &&\text{et} &&\tfrac{16}{3}\kappa_0+4\kappa_1+\sum_{s\in\Sigma^{2}}\mathrm{BB}(\F,s)=25\hspace{1mm};
\end{align}
il en résulte que $\Sigma^{2}$ est non vide. Soit $m$ un point de $\Sigma^{2}$; d'après le Lemme~\ref{lem:2-droite-invar} il passe par $m$ exactement deux droites $\ell_{m}^{(1)}$ et $\ell_{m}^{(2)}$ invariantes par $\F$. Alors, pour $i=1,2$, la Proposition~\ref{pro:F-dégénère-H} assure l'existence d'un feuilletage homogène $\mathcal{H}^{(i)}_{m}$ de degré $3$ sur $\pp$ appartenant à $\overline{\mathcal{O}(\F)}$ et tel que la droite $\ell_{m}^{(i)}$ soit $\mathcal{H}^{(i)}_{m}$-invariante. Comme $\Leg\F$ est plat par hypothèse, il en est de même pour $\Leg\mathcal{H}^{(1)}_{m}$ et $\Leg\mathcal{H}^{(2)}_{m}$. Donc chacun des $\mathcal{H}^{(i)}_{m}$ est linéairement conjugué à l'un des onze feuilletages homogènes donnés par le Théorème~\ref{thm:Class-Homog3-Plat}. Pour $i=1,2,$ la Proposition~\ref{pro:F-dégénère-H} assure aussi que
\vspace{1mm}
\begin{itemize}
\item [($\mathfrak{a}$)] $\Sing\F\cap\ell_{m}^{(i)}=\Sing\mathcal{H}^{(i)}_{m}\cap\ell_{m}^{(i)}$;
\vspace{0.5mm}
\item [($\mathfrak{b}$)] $\forall\hspace{1mm}s\in\Sing\mathcal{H}^{(i)}_{m}\cap\ell_{m}^{(i)},\quad
                   \mu(\mathcal{H}^{(i)}_{m},s)=1\hspace{2mm}\text{et}\quad
                   \mathrm{CS}(\mathcal{H}^{(i)}_{m},\ell_{m}^{(i)},s)=\mathrm{CS}(\F,\ell_{m}^{(i)},s)$.
\end{itemize}
\vspace{1mm}

\noindent Puisque $\mathrm{CS}(\F,\ell_{m}^{(1)},m)\mathrm{CS}(\F,\ell_{m}^{(2)},m)=1,$ nous avons $\mathrm{CS}(\mathcal{H}^{(1)}_{m},\ell_{m}^{(1)},m)\mathrm{CS}(\mathcal{H}^{(2)}_{m},\ell_{m}^{(2)},m)=1.$ Cette égalité et l'examen de la Table \ref{tab:CS(lambda)} donnent
\begin{align*}
\{\mathrm{CS}(\mathcal{H}^{(1)}_{m},\ell_{m}^{(1)},m),\,\mathrm{CS}(\mathcal{H}^{(2)}_{m},\ell_{m}^{(2)},m)\}=\{-2,-\tfrac{1}{2}\}\hspace{1mm};
\end{align*}
d'où $\mathrm{BB}(\F,m)=-\frac{1}{2}.$ Le point $m\in\Sigma^{2}$ étant arbitraire, $\Sigma^{2}$ est formé des $s\in\Sing\F$ tels que $\mathrm{BB}(\F,s)=-\frac{1}{2}$. Par suite le système (\ref{equa:Dar-BB-3}) se réécrit $\kappa_0+\kappa_1+\kappa_2=13$\, et \,$\frac{16}{3}\kappa_0+4\kappa_1-\frac{1}{2}\kappa_2=25$ dont l'unique solution est $(\kappa_0,\kappa_1,\kappa_2)=(0,7,6)$, c'est-à-dire que\hspace{1mm} $\Sing\F=\Sigma^{1}\cup\Sigma^{2},\hspace{2mm}\#\hspace{0.5mm}\Sigma^{1}=7$\hspace{2mm} et \hspace{2mm}$\#\hspace{0.5mm}\Sigma^{2}=6.$
\smallskip

\noindent Pour fixer les idées, nous supposons que $\mathrm{CS}(\mathcal{H}^{(1)}_{m},\ell_{m}^{(1)},m)=-2$ pour n'importe quel choix de $m\in\Sigma_2$; donc $\mathrm{CS}(\mathcal{H}^{(2)}_{m},\ell_{m}^{(2)},m)=-\frac{1}{2}.$ Dans ce cas, l'inspection de la Table \ref{tab:CS(lambda)} ainsi que les relations ($\mathfrak{a}$) et ($\mathfrak{b}$) conduisent à
\begin{align*}
&\#\hspace{0.5mm}(\Sigma^{1}\cap\ell_{m}^{(1)})=3,&&
\#\hspace{0.5mm}(\Sigma^{1}\cap\ell_{m}^{(2)})=2,&&
\Sigma^{2}\cap\ell_{m}^{(1)}=\{m\},&&
\Sigma^{2}\cap\ell_{m}^{(2)}=\{m,m'\}
\end{align*}
pour un certain point $m'\in\Sigma^{2}\setminus\{m\}$ vérifiant $\mathrm{CS}(\F,\ell_{m}^{(2)},m')=-\frac{1}{2}$. Ce point $m\hspace{0.1mm}'$ satisfait à son tour l'égalité $\Sigma^{2}\cap\ell_{m\hspace{0.1mm}'}^{(1)}=\{m\hspace{0.1mm}'\}$. Nous constatons que  $\ell_{m\hspace{0.1mm}'}^{(2)}=\ell_{m}^{(2)}$,\, $\ell_{m\hspace{0.1mm}'}^{(1)}\neq\ell_{m}^{(1)}$,\, $\ell_{m\hspace{0.1mm}'}^{(1)}\neq\ell_{m}^{(2)}$ et que ces trois droites distinctes satisfont $\Sigma^{2}\cap(\ell_{m}^{(1)}\cup\ell_{m}^{(2)}\cup\ell_{m\hspace{0.1mm}'}^{(1)})=\{m,m\hspace{0.1mm}'\}.$ Comme $\#\hspace{0.5mm}\Sigma^{2}=6=2\cdot3$, $\F$ possède $3\cdot3=9$ droites invariantes.
\smallskip

\noindent Posons $\Sigma^{1}\cap\ell_{m}^{(2)}=\{m_1,m_2\}$. Notons $\mathcal{D}_1,\mathcal{D}_2,\ldots,\mathcal{D}_6$ les six droites $\F$-invariantes qui restent; par construction chacune d'elles doit couper $\ell_{m}^{(1)}$ et $\ell_{m}^{(2)}$ en des points de $\Sigma^{1}.$ Par ailleurs, d'après la Remarque \ref{rem:droite-inva}, pour tout $s\in\Sing\F$ nous avons $\sigma(\F,s)\leq\tau(\F,s)+1\leq4.$ Donc par chacun des points $m_1$ et $m_2$ passent exactement trois droites de la famille $\{\mathcal{D}_1,\mathcal{D}_2,\ldots,\mathcal{D}_6\}$. Puisque $\#\hspace{0.5mm}(\Sigma^{1}\cap\ell_{m}^{(1)})=3$, $\Sigma^{1}\cap\ell_{m}^{(1)}$ contient au moins un point, noté $m_3$, par lequel passent précisément trois droites de la famille $\{\ell_{m\hspace{0.1mm}'}^{(1)},\mathcal{D}_1,\mathcal{D}_2,\ldots,\mathcal{D}_6\}$. Ainsi, pour $j=1,2,3$ nous avons $\sigma(\F,m_j)=4,$ ce qui implique que $\tau(\F,m_j)=3.$ L'hypothèse sur les singularités de $\F$ assure que $\nu(\F,m_j)=1$ pour $j=1,2,3.$ Il s'en suit que les singularités $m_1$, $m_2$ et $m_3$ sont radiales d'ordre $2$ de $\F$.
\smallskip

\noindent Par construction ces trois points ne sont pas alignés. Nous concluons en appliquant la Proposition~\ref{pro:Fermat-d}.
\end{proof}

\medskip

\noindent Dans \cite{MP13} les auteurs ont étudié les feuilletages de $\mathbf{F}(d)$ qui sont convexes à diviseur d'inflexion réduit; ils ont montré que l'ensemble formé de tels feuilletages est contenu dans $\mathbf{FP}(d)$, \emph{voir} \cite[Théorème 2]{MP13}.
Ces feuilletages sont à singularités non-dégénérées comme le montre l'énoncé suivant qui est une légère généralisation de \cite[Lemme~4.1]{MP13}.

\begin{lem}\label{lem:convexe-reduit-nd}
Tout feuilletage convexe sur $\pp$ à diviseur d'inflexion réduit est à singularités non-dégéné\-rées.
\end{lem}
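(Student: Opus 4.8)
Je veux montrer que si $\F$ est convexe à diviseur d'inflexion $\IF$ réduit, alors toute singularité $s\in\Sing\F$ vérifie $\mu(\F,s)=1$. Le point de départ est la propriété \texttt{4} rappelée dans l'excerpt : $\deg\IF=3d$. Puisque $\F$ est convexe, $\IF=\IinvF$ est un produit de droites invariantes, et l'hypothèse que $\IF$ est réduit signifie que ces droites sont distinctes ; il y en a donc exactement $3d$ (comptées sans multiplicité). L'idée directrice est de relier, en chaque singularité $s$, le nombre de droites invariantes passant par $s$ à la géométrie locale de $\F$ en $s$, puis d'utiliser une formule globale de comptage pour forcer la non-dégénérescence.

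**Les étapes.**
D'abord, je me placerais en une singularité $s$ et j'observerais que chaque droite invariante réduite passant par $s$ contribue, de manière transverse, à la structure locale du feuilletage ; comme le diviseur d'inflexion est réduit, ces droites sont deux à deux distinctes en $s$ et leurs tangentes sont distinctes. Ensuite j'exploiterais l'encadrement local entre le nombre $\sigma(\F,s)$ de droites invariantes passant par $s$ et les invariants $\tau(\F,s)$, $\nu(\F,s)$ : le point crucial est que la réduction de $\IF$ empêche qu'une droite invariante soit une droite d'inflexion d'ordre $\geq 2$, ce qui borne $\tau(\F,s)$ et donc $\nu(\F,s)$. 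Concrètement, en coordonnées locales adaptées où $s=(0,0)$, si $\mu(\F,s)>1$ alors la partie linéaire du champ dégénère, ce qui force soit une tangence d'ordre supérieur le long d'une feuille invariante (contredisant la réduction de $\IF$), soit une multiplicité $\nu(\F,s)\geq 2$ incompatible avec la transversalité des droites invariantes passant par $s$.

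**L'argument de comptage.**
Pour rendre cela rigoureux et global, j'utiliserais la formule de Darboux $\sum_{s}\mu(\F,s)=d^2+d+1$ (formule \eqref{equa:Darboux-BB}), combinée au comptage des incidences entre les $3d$ droites invariantes et les singularités. Chaque droite invariante $L$ porte une distribution de singularités dont la somme des indices de Camacho-Sad vaut $\mathrm{CS}$-total $=L\cdot L=1$ (auto-intersection dans $\pp$), et la convexité à diviseur réduit impose que chaque singularité sur $L$ soit simple le long de $L$. En confrontant le nombre total de telles incidences avec $\sum_s\mu(\F,s)=d^2+d+1$, on obtient que chaque singularité doit avoir $\mu(\F,s)=1$, faute de quoi le décompte d'incidences ou la somme des nombres de Milnor serait violé.

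**L'obstacle principal.**
Le point délicat sera de transformer proprement l'hypothèse \og $\IF$ réduit\fg{} en une contrainte locale interdisant $\mu(\F,s)\geq 2$ : il faut montrer qu'une singularité dégénérée produit nécessairement soit un point d'inflexion non réduit sur une feuille invariante, soit une droite invariante de multiplicité $\geq 2$ dans $\IinvF$, ce qui contredit la réduction. Techniquement cela demande d'analyser le jet de $\F$ en $s$ et de voir comment la dégénérescence ($J^1_s\mathrm{X}$ nilpotente ou nulle) force l'apparition d'un facteur multiple dans l'équation \eqref{equa:ext1} définissant $\IF$. C'est essentiellement l'argument de \cite[Lemme~4.1]{MP13} qu'il s'agit de généraliser ; la seule subtilité supplémentaire par rapport au cas traité là-bas est de couvrir toutes les configurations locales de dégénérescence (et pas seulement le cas d'une singularité radiale ou d'un type particulier), d'où la formulation \emph{légèrement} plus générale de l'énoncé.
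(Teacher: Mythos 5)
Your proposal takes a genuinely different route from the paper's, but both of its load-bearing mechanisms fail. The pivotal claim in your second step --- that reducedness of $\IF$ \og emp\^eche qu'une droite invariante soit une droite d'inflexion d'ordre $\geq2$\fg{} and thereby bounds $\tau(\F,s)$, hence $\nu(\F,s)$ --- is false: the Fermat foliation $\F^{d}$ is convex with \emph{reduced} inflexion divisor, yet its three radial singularities satisfy $\tau(\F^{d},s)=d$, the maximal value. What convexity plus reducedness actually bound is a different quantity, namely the multiplicity of $\IF$ at $s$, which becomes the number of distinct invariant lines through $s$. Next, your global counting argument (Darboux plus Camacho--Sad) is asserted rather than carried out: you never exhibit an inequality that a single degenerate singularity would violate. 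The relation $\sum_{s}\mu(\F,s)=d^{2}+d+1$ is an identity valid for \emph{every} degree-$d$ foliation, so to extract a contradiction you would need a lower bound on the number of singular points forced by the arrangement of the $3d$ lines, together with a link between concurrence of lines and Milnor numbers; none of this is in the proposal, and the paper needs none of it. Finally, the step you yourself flag as \og l'obstacle principal\fg{} --- showing that $\mu(\F,s)\geq2$ would force a multiple component or excess multiplicity in $\IF$ --- is precisely the content of the lemma; deferring it to \og essentiellement l'argument de \cite[Lemme~4.1]{MP13}\fg{} means the one thing that needed proving is not proved.

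For comparison, the paper's proof is purely local and short. Writing the germ of $\F$ at $s$ as $\mathrm{X}=\mathrm{X}_{\nu}+\mathrm{X}_{\nu+1}+\cdots$ (with $\nu=\nu(\F,s)$), the determinant $\mathrm{X}(x)\mathrm{X}^{2}(y)-\mathrm{X}(y)\mathrm{X}^{2}(x)$ defining $\IF$ has order $\geq 3\nu-1$ at $s$, with lowest homogeneous term $P_{3\nu-1}=\mathrm{X}_{\nu}(x)\mathrm{X}_{\nu}^{2}(y)-\mathrm{X}_{\nu}(y)\mathrm{X}_{\nu}^{2}(x)$. On the other hand, every invariant line through $s$ lies in the tangent cone $y\mathrm{X}_{\nu}(x)-x\mathrm{X}_{\nu}(y)$, of degree $\nu+1$; since by hypothesis $\IF$ is a reduced union of invariant lines, its multiplicity at $s$ equals the number of such lines through $s$, hence is at most $\nu+1$. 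Comparing the two estimates gives $3\nu-1\leq\nu+1$, so $\nu=1$; the same comparison forces $P_{2}\not\equiv0$, and since $P_{2}=\det\bigl(J_{s}^{1}\mathrm{X}\bigr)\cdot\det\bigl(v,J_{s}^{1}\mathrm{X}\,v\bigr)$ this makes the linear part invertible, i.e. $\mu(\F,s)=1$. This multiplicity comparison at $s$ is exactly the local constraint your sketch was groping for; it replaces both your $\tau$-bound and your global count, and it is the correct way to convert \og $\IF$ r\'eduit\fg{} into non-degeneracy.
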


\begin{proof}
Soit $\F$ un tel feuilletage et $s\in\Sing\F$ de multiplicité algébrique $\nu$. Fixons une carte affine $(x,y)$ telle que $s=(0,0)$; le germe $\F$ en $s$ est défini par un champ de vecteurs $\mathrm{X}$ du type $\mathrm{X}=\mathrm{X}_{\nu}+\mathrm{X}_{\nu+1}+\cdots$, où les $\mathrm{X}_{i}$ sont homogènes de degré $i$. Le diviseur d'inflexion $\mathrm{I}_{\F}$ de $\F$ est donné par l'équation
\[0=\left|\begin{array}{cc}\mathrm{X}(x) & \mathrm{X}(y)\\ \mathrm{X}^{2}(x) &\mathrm{X}^{2}(y)\end{array}\right|=P_{3\nu-1}(x,y)+\cdots,\]
où $P_{3\nu-1}(x,y)=\mathrm{X}_{\nu}(x)\mathrm{X}_{\nu}^{2}(y)-\mathrm{X}_{\nu}(y)\mathrm{X}_{\nu}^{2}(x)$ est un polynôme homogène (éventuellement nul) de degré $3\nu-1$.
Montrons d'abord que $\nu=1$. Les droites invariantes de $\F$ passant par l'origine sont contenues dans le cône tangent $yX_{\nu}(x)-xX_{\nu}(y)$ de $\mathrm{X}_{\nu}$ qui est un polynôme homogène de degré $\nu+1$. L'hypothèse sur $\F$ implique alors que $\nu=1$. Il s'en suit aussi  que le polynôme $P_{3\nu-1}$ n'est pas identiquement nul; par suite la partie linéaire $\mathrm{X}_{1}$ de $\mathrm{X}$ est saturée, ce qui
implique que la singularité $s$ est non-dégénérée.
\end{proof}
\smallskip

\noindent
À notre connaissance les seuls feuilletages convexes à diviseur inflexion réduit connus dans la littérature sont ceux qui sont présentés dans \cite[Table~1.1]{MP13}: le feuilletage $\F^{d}$ en tout degré et les trois feuilletages donnés par les $1$-formes
\[(2x^{3}-y^{3}-1)y\mathrm{d}x+(2y^{3}-x^{3}-1)x\mathrm{d}y\,,\]
\[(y^2-1)(y^2- (\sqrt{5}-2)^2)(y+\sqrt{5}x) \mathrm{d}x-(x^2-1)(x^2- (\sqrt{5}-2)^2)(x+\sqrt{5}y) \mathrm{d}y \, ,\]
\[(y^3-1)(y^3+7x^3+1) y \mathrm{d}x-(x^3-1)(x^3+7 y^3+1) x \mathrm{d}y\,,\]
qui sont de degré  $4$, $5$ et $7$ respectivement.
Dans \cite[Problème~9.1]{MP13} les auteurs demandent s'il y a d'autres feuilletages convexes à diviseur d'inflexion réduit. En combinant le Théorème~\ref{thm:Fermat} avec le Lemme~\ref{lem:convexe-reduit-nd} nous donnons une réponse négative en degré trois à ce problème.
\begin{cor}\label{cor3}
Tout feuilletage convexe de degré $3$ sur $\pp$ à diviseur d'inflexion réduit est linéairement conjugué au feuilletage de \textsc{Fermat} $\F^{3}$.
\end{cor}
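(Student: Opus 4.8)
The plan is to derive this corollary as an immediate consequence of the main theorem of the preceding section, Th\'eor\`eme~\ref{thm:Fermat}, combined with two facts already recorded in the text: the flatness of the Legendre transform of convex foliations with reduced inflection divisor, and Lemme~\ref{lem:convexe-reduit-nd}. In other words, no genuinely new argument is needed; the task is simply to check that a convex degree~$3$ foliation with reduced inflection divisor meets the three hypotheses of Th\'eor\`eme~\ref{thm:Fermat}.

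First I would let $\F$ be a convex foliation of degree~$3$ on $\pp$ whose inflection divisor $\IF$ is reduced. Being convex means $\IF$ is totally invariant, hence (being reduced) a product of $3d=9$ distinct invariant lines. By the result recalled just above the statement, namely \cite[Th\'eor\`eme~2]{MP13}, every convex foliation with reduced inflection divisor lies in $\mathbf{FP}(d)$; in our case this says precisely that the dual $3$-web $\Leg\F$ is flat. Next I would invoke Lemme~\ref{lem:convexe-reduit-nd}, which asserts that any convex foliation with reduced inflection divisor has only non-degenerate singularities, so that $\mu(\F,s)=1$ for every $s\in\Sing\F$.

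At this point $\F$ is a degree~$3$ foliation all of whose singularities are non-degenerate and whose dual web $\Leg\F$ is flat, which are exactly the hypotheses of Th\'eor\`eme~\ref{thm:Fermat}. Applying that theorem then yields that $\F$ is linearly conjugate to the Fermat foliation $\F^{3}$ defined by $\omega_{F}^{3}=(x^{3}-x)\mathrm{d}y-(y^{3}-y)\mathrm{d}x$, which is the desired conclusion. I do not expect any obstacle in the corollary itself: the substantive work has already been carried out, on one hand in establishing Th\'eor\`eme~\ref{thm:Fermat} (which in turn rests on the classification Th\'eor\`eme~\ref{thm:Class-Homog3-Plat} of homogeneous flat-dual foliations of degree~$3$ and on the Camacho--Sad index bookkeeping), and on the other hand in Lemme~\ref{lem:convexe-reduit-nd}; the present proof only assembles these ingredients. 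The one point deserving a word of care is the verification that \cite[Th\'eor\`eme~2]{MP13} indeed applies verbatim here, i.e.\ that convexity together with reducedness of $\IF$ is precisely the hypothesis guaranteeing $\F\in\mathbf{FP}(3)$, so that the flatness of $\Leg\F$ is legitimately at our disposal before calling upon Th\'eor\`eme~\ref{thm:Fermat}.
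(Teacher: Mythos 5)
Your proof is correct and follows exactly the paper's intended argument: the corollary is obtained by combining \cite[Th\'eor\`eme~2]{MP13} (flatness of $\Leg\F$ for convex foliations with reduced inflection divisor), Lemme~\ref{lem:convexe-reduit-nd} (non-degeneracy of the singularities), and Th\'eor\`eme~\ref{thm:Fermat}. The paper leaves this assembly implicit, stating only that one concludes \og en combinant le Th\'eor\`eme~\ref{thm:Fermat} avec le Lemme~\ref{lem:convexe-reduit-nd}\fg, and your write-up makes precisely that chain of deductions explicit.
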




\begin{thebibliography}{12}

\bibitem{BB72}
P.~Baum and R.~Bott.
\newblock Singularities of holomorphic foliations.
\newblock {\em J. Differential Geometry}, 7:279--342, 1972.

\bibitem{BD28}
W.~Blaschke and J.~Dubourdieu.
\newblock Invarianten von Kurvengeweben.
\newblock {\em Abh. Math. Sem. Univ. Hamburg}, 6:198--215, 1928.

\bibitem{BFM13}
A.~Beltr\'{a}n, M.~Falla~Luza, and D.~Mar\'{\i}n.
\newblock Flat 3-webs of degree one on the projective plane.
\newblock {\em Ann. Fac. Sci. Toulouse Math. (6)}, 23(4):779--796, 2014.

\bibitem{Bru00}
M.~Brunella.
\newblock {\em Birational geometry of foliations}.
\newblock First Latin American Congress of Mathematicians, IMPA, 2000.
\newblock Disponible sur
  \url{http://www.impa.br/opencms/pt/downloads/birational.pdf}.

\bibitem{CS82}
C.~Camacho and P.~Sad.
\newblock Invariant varieties through singularities of holomorphic vector fields.
\newblock {\em Ann. of Math. (2)}, 115(3):579--595, 1982.

\bibitem{CDGBM10}
D.~Cerveau, J.~D{\'e}serti, D.~Garba~Belko, and R.~Meziani.
\newblock G\'eom\'etrie classique de certains feuilletages de degr\'e deux.
\newblock {\em Bull. Braz. Math. Soc. (N.S.)}, 41(2):161--198, 2010.

\bibitem{Hen06}
A.~Hénaut.
\newblock Planar web geometry through abelian relations and singularities.
\newblock {\em Nankai Tracts Math.}, 11:269--295, 2006.

\bibitem{MP13}
D.~Mar\'{\i}n and J.~V. Pereira.
\newblock Rigid flat webs on the projective plane.
\newblock {\em Asian J. Math.} 17(1):163--191, 2013.

\bibitem{Mil99}
J.~Milnor.
\newblock {\em Dynamics in one complex variable: Introductory lectures},
\newblock Vieweg \& Sohn, Braunschweig, 1999.

\bibitem{Per01}
J.~V. Pereira.
\newblock Vector fields, invariant varieties and linear systems.
\newblock {\em Ann. Inst. Fourier (Grenoble)}, 51(5):1385--1405, 2001.

\bibitem{PP08}
J.~V. Pereira and L.~Pirio.
\newblock Classification of exceptional CDQL webs on compact complex surfaces.
\newblock {\em Int. Math. Res. Not. IMRN}, 12:2169--2282, 2010.

\bibitem{PP09}
J.~V. Pereira and L.~Pirio.
\newblock {\em An invitation to web geometry}.
\newblock IMPA, 2009.

\bibitem{Rip07}
O.~Ripoll.
\newblock Properties of the connection associated with planar webs and applications.
\newblock Prépublication \url{http://arxiv.org/abs/math/0702321}, 2007.

\end{thebibliography}
\end{document}